\newcommand\Aut{\mathrm{Aut}}
\newcommand\BiCos{\mathrm{BiCos}}\newcommand{\bfc}{c}
\newcommand\calO{{\mathcal O}}\newcommand{\calS}{\mathcal{S}}\newcommand\Center{\mathrm{Z}}\newcommand{\Cen}{\mathrm{C}}
\newcommand\Cay{\mathrm{Cay}}\newcommand{\Core}{\mathrm{Core}}\newcommand{\calI}{\mathcal{I}}
\newcommand\Fix{\mathrm{Fix}}
\newcommand\Hol{\mathrm{Hol}}
\newcommand\la{\langle}
\newcommand{\maxsub}{<_{\mathrm{max}}}
\newcommand\Nor{\mathrm{N}}
\newcommand\PGL{\mathrm{PGL}}
\newcommand\PSL{\mathrm{PSL}}
\newcommand\ra{\rangle}
\newcommand\Sym{\mathrm{Sym}}
\newtheorem{theorem}{Theorem}[section]
\newtheorem{corollary}[theorem]{Corollary}
\newtheorem{lemma}[theorem]{Lemma}
\newtheorem{proposition}[theorem]{Proposition}
\theoremstyle{definition}
\newtheorem{definition}[theorem]{Definition}
\newtheorem{conjecture}[theorem]{Conjecture}
\newtheorem*{remark}{Remark}
\begin{document}
\title[Standard double covers of abelian Cayley graphs]{Almost all standard double covers of abelian Cayley graphs have smallest possible automorphism groups}

\author[B.~Xia]{Binzhou Xia}
\address[Binzhou Xia]{School of Mathematics and Statistics, The University of Melbourne, Parkville, VIC 3010, Australia}
\email{binzhoux@unimelb.edu.au}

\author[Z.~Zhang]{Zhishuo Zhang}
\address[Zhishuo Zhang]{School of Mathematics and Statistics, The University of Melbourne, Parkville, VIC 3010, Australia}
\email{zhishuoz@student.unimelb.edu.au}

\author[S.~Zheng]{Shasha Zheng}
\address[Shasha Zheng]{Department of Algebra and Geometry, Comenius University, 842 48, Bratislava, Slovakia}
\email{shasha.zheng@fmph.uniba.sk}

\begin{abstract}
    The standard double cover of a graph $\Gamma$ is the direct product $\Gamma\times K_2$. A graph $\Gamma$ is said to be stable if all the automorphisms of $\Gamma\times K_2$ come from its factors. Although the study of stability has attracted significant attention, particularly regarding Cayley graphs of abelian groups, a complete classification remains elusive even for Cayley graphs of cyclic groups. In this paper, we study the asymptotic enumeration of both labeled and unlabeled  Cayley graphs of abelian groups whose standard double cover has the smallest possible automorphism group. As a corollary, in both the labeled and unlabeled settings, we conclude that the proportion of stable Cayley graphs of an abelian group of order $r$ approaches $1$ as $r\rightarrow \infty$, proving that almost all Cayley graphs of finite abelian groups are stable.

\vskip4pt
\noindent {\sc Keywords}: Cayley graph, abelian group, standard double cover, automorphism group, asymptotic enumeration  
\vskip4pt
\noindent {\sc MSC2020}: 05C25, 05C30, 05C76
\end{abstract}

\maketitle

\section{Introduction}

For a graph $\Gamma$, the \emph{standard double cover} (also called \emph{canonical double cover}, \emph{bipartite double cover} or \emph{Kronecker double cover}) of $\Gamma$, denoted by $D(\Gamma)$, is the direct
product $\Gamma\times K_2$ of $\Gamma$ with $K_2$, the complete graph on two vertices. The vertex-set of $D(\Gamma)$ is $V(\Gamma)\times \{0,1\}$ with vertices $(u,x)$ and $(v,y)$ being adjacent in $D(\Gamma)$ if and only if $u$ and $v$ are adjacent in $\Gamma$ and $x\neq y$. It is clear that\[
    \Aut(D(\Gamma))\geq \Aut(\Gamma)\times \Aut(K_2)=\Aut(\Gamma)\times C_2.
\]
An automorphism of $D(\Gamma)$ not contained in the subgroup $\Aut(\Gamma)\times \Aut(K_2)$ is called \emph{unexpected}. The graph $\Gamma$ is said to be \emph{stable} if its standard double cover $D(\Gamma)$ admits no unexpected automorphisms (that is, $\Aut(D(\Gamma))=\Aut(\Gamma)\times \Aut(K_2)$) and \emph{unstable} otherwise. 

The study of stability of graphs (with the above definition), first stated in the language of adjacency matrices, began with the work of Maru\v si\v c, Scapellato and Zagaglia Salvi \cite{MSZ1989}. It is easily seen that disconnected graphs, bipartite graphs admitting nontrivial automorphisms, as well as graphs containing distinct vertices with the same neighbourhoods (also known as ``twin'' vertices) are unstable.  These graphs are referred to as \emph{trivially} unstable. Several authors have worked on developing criteria for the (in)stability of various classes of graphs going beyond trivial cases; see for example \cite{HM2024,S2001,SW2008}. Additionally, Wilson \cite{SW2008} also posed conjectures regarding the classification of unstable members of several infinite families of graphs. Wilson's conjecture has been confirmed to be true by Qin, Xia and Zhou \cite{QXZ2018} in the case of generalized Petersen graphs, by Morris \cite{DW2023} in the case of torodial (and triangular) grids, and Ahanjideh, Kovács, and Kutnar \cite{AKK2024} in the case of Rose Window graphs. However, Wilson's conjecture for Cayley graphs (refer to the start of Section~\ref{sec2} for definition) of finite cyclic groups has been shown to be false, with Hujdurović, Mitrović and Morris \cite{HMM2023} showing an existence of a counterexample with minimal valency $8$.

Nevertheless, substantial ongoing work is being done on the topic of stability of Cayley graphs of abelian groups. Qin, Xia and Zhou \cite{QXZ2019} have established that, besides the previously mentioned trivial exceptions, Cayley graphs of cyclic groups of prime order are stable, a result that has been generalised to cyclic groups of arbitrary odd order by Hujdurovi\' c and Fernandez \cite{FH2022}. The most general version of this result so far was obtained by Morris~\cite{Morris2021} who extended it to Cayley graphs of arbitrary finite abelian groups of odd order. Subsequently, circulants of order twice a prime have been classified by Hujdurovi\' c, Mitrovi\' c and Morris \cite{HMM2021}. The problem of classifying unstable circulant graphs (and more generally, Cayley graphs of abelian groups) of even order remains an active area of research with new criteria for instability still being developed \cite{Bychawski2024,HK2023,Zhang2025}.

In this paper, rather than classifying particular unstable graphs, we look at the problem of stability of Cayley graphs of abelian groups from the asymptotic point of view. It turns out that almost all standard double covers of Cayley graphs of abelian groups have smallest possible automorphism groups; see Theorems~\ref{thmNumber} and~\ref{thmIso} and their corollaries. This work is further motivated by a series of papers~\cite{DSV2016,MMS2022,MS2021,MSV2015,XZ2023} on the asymptotic enumeration of Cayley (di)graphs with smallest possible automorphism groups, as explained after Corollary~\ref{corIso1}.

For each $\delta\in(0,1/2)$, let
\begin{equation}\label{eqHdelta}
    h_\delta(r)= 2^{-\frac{r}{24}+(r^{2\delta}+r^\delta+6)(\log_2r)^2+5\log_2r+4}+2^{-\frac{2}{25}r^\delta+3\log_2r+1}.
\end{equation}
Clearly, $h_\delta(r)$ approaches to $0$ as $r\to \infty$. The main results of our paper are as follows.

\begin{theorem}\label{thmNumber}
    Let $G$ be an abelian group of order $r$ and of exponent greater than $2$, let $\iota$ be the inversion on $G$, and let $\delta\in(0,1/2)$. Then the proportion of inverse-closed subsets $S\subseteq G$ with
    \[
        \Aut(D(\Cay(G,S)))=(R(G)\rtimes\la\iota\ra)\times C_2
    \]
    is at least $1-h_\delta(r)$, where $h_\delta(r)$ is as in~\eqref{eqHdelta}. In particular, the proportion of inverse-closed subsets $S\subseteq G$ such that $\Cay(G,S)$ is stable is at least $1-h_\delta(r)$.
\end{theorem}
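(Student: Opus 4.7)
Set $V = G \times \{0,1\}$, $\Gamma = \Cay(G,S)$, $D = D(\Gamma)$, and $B = (R(G) \rtimes \la\iota\ra) \times C_2$, so that $B \leq \Aut(D)$ holds automatically for every inverse-closed $S$. The plan is to bound the number of inverse-closed $S \subseteq G$ for which $\Aut(D) \supsetneq B$ and show that this count is at most $h_\delta(r)$ times the total number of inverse-closed subsets of $G$. I would follow the template developed in the asymptotic-enumeration papers cited in the introduction: classify the possible \emph{witnesses} to a non-minimal automorphism group, i.e.\ elements $\psi \in \Aut(D)\setminus B$, into a short list of structural types, and then, for each type, bound the number of inverse-closed $S$ that admit such a $\psi$ as an automorphism.

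\textbf{Key steps.} First, I would exploit the bipartite structure of $D$: every automorphism either preserves or swaps the parts $G \times \{0\}$ and $G \times \{1\}$, and the swap is always implemented by the $C_2$ factor of $B$. Hence one may assume $\psi$ preserves the bipartition and is encoded by a pair $(\sigma_0,\sigma_1)$ of permutations of $G$ satisfying $v-u \in S \Longleftrightarrow \sigma_1(v)-\sigma_0(u)\in S$. Second, after modding out by the diagonal $R(G)$-action on such pairs and by $\iota$, I would establish a structural dichotomy: either $(\sigma_0,\sigma_1)$ is induced by some $\alpha \in \Aut(G)\setminus\la\iota\ra$ fixing $S$ setwise, or $\psi$ has \emph{small support} in the sense that it moves fewer than $r^\delta$ vertices of $V$. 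The hypothesis that $G$ has exponent greater than $2$ is needed here to ensure that $\iota\neq 1$ and to avoid the degenerate case $G\cong C_2^n$, in which every subset is inverse-closed and the analysis collapses. Third, count: the number of inverse-closed $S\subseteq G$ preserved by a fixed $\alpha\in\Aut(G)\setminus\la\iota\ra$ is at most $2^{m(\alpha)}$, where $m(\alpha)$ is the number of $\la\alpha,\iota\ra$-orbits on $G$. A union bound over $\alpha$, using the standard estimate $|\Aut(G)|\leq r^{\log_2 r}$ for abelian $G$ together with an orbit-length analysis forced by the Sylow decomposition of $G$, yields the first summand $2^{-r/24 + (r^{2\delta}+r^\delta+6)(\log_2 r)^2 + O(\log r)}$ of $h_\delta(r)$. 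The small-support witnesses are counted separately: the adjacency equation forces $S$ to be determined on a set of size proportional to $r^\delta$, giving the second summand $2^{-2r^\delta/25 + O(\log r)}$.

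\textbf{Main obstacle.} The hardest step will be the structural dichotomy in the second bullet: for abelian $G$ with a nontrivial $2$-part, one must rule out ``twisted'' pairs $(\sigma_0,\sigma_1)$ with $\sigma_0\neq\sigma_1$ that are not induced by any automorphism of $G$. Such twisted witnesses are peculiar to the bipartite double cover and do not arise in the single-Cayley-graph setting, so the arguments developed for Cayley-graph DRRs/GRRs do not apply directly; instead I expect to need a delicate analysis of how $\sigma_0$ and $\sigma_1$ interact through the common neighbourhoods in $D$, with case-splitting along the $2$-Sylow subgroup of $G$ and special treatment of involutions. Once this dichotomy is in place, the orbit-counting and union-bound steps are relatively routine, and the precise exponents in $h_\delta(r)$ are set by tracking the smallest possible $\la\alpha,\iota\ra$-orbit structure on $G$ and by balancing the large- and small-support regimes against the parameter $\delta$.
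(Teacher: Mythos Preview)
Your proposed dichotomy---``either $(\sigma_0,\sigma_1)$ is induced by some $\alpha\in\Aut(G)\setminus\la\iota\ra$ fixing $S$, or $\psi$ has small support''---is not the one the paper uses, and there is no reason to expect it to hold. The paper's actual strategy is group-theoretic rather than element-by-element: it studies the bipartition-preserving subgroup $B(S)=\Aut(D)_{G^+}$ as a permutation group on $G^+$ (and on $G^-$) containing the regular subgroup $R(G)$, and splits according to how $R(G)$ sits inside overgroups $X\leq B(S)$. Concretely, after disposing of trivially unstable $S$ ($\calS-\calS_1$), the paper separates the case $\Nor_{B(S)}(R(G))>R(G)\rtimes\la\iota\ra$ (this is the only place where a single $\alpha\in\Hol(G)\setminus\{1,\iota\}$ stabilising $S$ is the witness, and it contributes only the simple term $2^{-r/24+(\log_2r)^2+\log_2r+2}$, with no $r^{2\delta}$) from the cases $\calS_4$ (some $X$ has $R(G)$ maximal and self-normalising) and $\calS_5$ (some $X$ has $R(G)\rtimes\la\iota\ra$ as the unique intermediate subgroup). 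The heart of the proof then invokes the structure theorems of Dobson--Spiga--Verret for overgroups of an abelian regular subgroup: in $\calS_4$ one gets $X^+=(H^+\times Q)\rtimes L$ with $H^+\cong C_p^i$, and in $\calS_5$ one gets $X^+\cong\PGL_2(q)\times C_2^\ell$. None of this is visible in your outline.

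The parameter $\delta$ also enters differently from what you describe. Within $\calS_4$ the paper splits on whether the core $\Core_X(R(G))$ has order $\geq r^{1-\delta}$ or not. In the large-core case one has $i\leq r^\delta$, and the $r^{2\delta}+r^\delta$ in the first summand of $h_\delta(r)$ arises from counting, up to equivalence, the possible overgroups $X$ via the number of homomorphisms $L\to\Aut(H^+\times Q)$ (Lemma~\ref{lmXprime}); the bound on $S$ for a fixed $X$ comes from bi-coset graph counting and a coset-union lemma (Lemmas~\ref{lmConj} and~\ref{lmX}). In the small-core case the quotient $G/N$ is cyclic of order $>r^\delta$, and the second summand $2^{-\frac{2}{25}r^\delta+3\log_2r+1}$ comes from a Babai--Godsil style independence argument (Lemma~\ref{lmPsi} and Proposition~\ref{propW}), not from any ``small support'' consideration. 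Your union-bound-over-$\Aut(G)$ plan would at best recover the easy $\calS_3$ bound; it cannot produce the $r^{2\delta}$ exponent, and your small-support alternative has no mechanism to force $S$ into a set of size $2^{\bfc(G)-\Omega(r^\delta)}$.
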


\begin{remark}
    Take $\delta=0.001$. A routine calculus argument shows that whenever $r\geq 50000$,
    \[
        2^{-\frac{r}{24}+(r^{2\delta}+r^\delta+6)(\log_2r)^2+5\log_2r+4}<2^{-\frac{2}{25}r^\delta+3\log_2r+1}.
    \]
    Thus, for $r\geq 50000$, the proportion of inverse-closed subsets $S\subseteq G$ with
    \[
        \Aut(D(\Cay(G,S)))=(R(G)\rtimes\la\iota\ra)\times C_2
    \]
    is at least $1-h_\delta(r)>1-2^{-\frac{2}{25}r^{0.001}+3\log_2r+2}$.
\end{remark}

Combining Theorem~\ref{thmNumber} with~\cite[Theorem~1.2(b)]{GSX2025}, we obtain the following corollary.

\begin{corollary}\label{cor}
    Let $G$ be an abelian group of order $r$, and let $\iota$ be the inversion on $G$. Then the proportion of inverse-closed subsets $S\subseteq G$ with $\Aut(D(\Cay(G,S)))=(R(G)\rtimes\la\iota\ra)\times C_2$ approaches to $1$ as $r\to \infty$. In particular, the proportion of inverse-closed subsets $S\subseteq G$ with $\Cay(G,S)$ stable approaches to $1$ as $r\to \infty$.
\end{corollary}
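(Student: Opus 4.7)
The plan is to derive the corollary as a direct combination of the quantitative Theorem~\ref{thmNumber} with the external input~\cite[Theorem~1.2(b)]{GSX2025}, splitting the family of all finite abelian groups of order $r$ into two cases according to whether the exponent of $G$ exceeds $2$. For abelian $G$ of exponent greater than $2$, I would apply Theorem~\ref{thmNumber} with any fixed $\delta\in(0,1/2)$ (for concreteness, $\delta=1/4$) and verify that $h_\delta(r)\to 0$ as $r\to\infty$. From~\eqref{eqHdelta}, the first summand has exponent $-r/24+O((\log r)^2 r^{2\delta})$ and the second has exponent $-(2/25)r^\delta+O(\log r)$; since $2\delta<1$, both tend to $-\infty$, so the lower bound $1-h_\delta(r)$ tends to $1$.

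It remains to treat the case in which $G$ has exponent at most $2$, that is, $G\cong C_2^n$ with $r=2^n$. Here the inversion $\iota$ is trivial, so $R(G)\rtimes\la\iota\ra=R(G)$, and every $S\subseteq G$ is automatically inverse-closed. The target equality therefore reduces to $\Aut(D(\Cay(G,S)))=R(G)\times C_2$, and the plan is to invoke~\cite[Theorem~1.2(b)]{GSX2025}, which is tailored precisely to this regime, to conclude that the proportion of such $S$ tends to $1$ as $n\to\infty$ (equivalently, as $r\to\infty$). Combining the two cases yields the first assertion of the corollary.

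For the ``in particular'' statement, I would note the chain of inclusions
\[
    (R(G)\rtimes\la\iota\ra)\times C_2 \;\leq\; \Aut(\Cay(G,S))\times C_2 \;\leq\; \Aut(D(\Cay(G,S))),
\]
valid whenever $S$ is inverse-closed, since $\iota$ then preserves adjacency in $\Cay(G,S)$. If the outer two groups coincide, equality is forced throughout, yielding both $\Aut(\Cay(G,S))=R(G)\rtimes\la\iota\ra$ and $\Aut(D(\Cay(G,S)))=\Aut(\Cay(G,S))\times C_2$; the latter is precisely the definition of stability of $\Cay(G,S)$. There is no genuine obstacle in this argument: Theorem~\ref{thmNumber} and the cited result from~\cite{GSX2025} do all of the heavy lifting, and the only care needed is in ensuring that the exponent-at-most-$2$ case not covered by Theorem~\ref{thmNumber} is indeed furnished by the external input, together with the routine verification that $h_\delta(r)\to 0$.
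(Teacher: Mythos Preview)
Your proposal is correct and follows essentially the same route as the paper: split into the exponent-greater-than-$2$ case handled by Theorem~\ref{thmNumber} (using $h_\delta(r)\to 0$) and the elementary-abelian-$2$ case handled by~\cite[Theorem~1.2(b)]{GSX2025}, then deduce stability from the sandwich $(R(G)\rtimes\la\iota\ra)\times C_2\leq\Aut(\Cay(G,S))\times C_2\leq\Aut(D(\Cay(G,S)))$. The paper's own proof is in fact slightly terser but identical in structure.
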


Note that $\Aut(D(\Cay(G,S)))=(R(G)\rtimes\la\iota\ra)\times C_2$ forces that  $\Aut(\Cay(G,S))=R(G)\rtimes\la\iota\ra$. Hence, Corollary \ref{cor} covers the following result of Dobson, Spiga and Verret.

\begin{corollary}[{\cite[Theorem~1.5]{DSV2016}}]\label{corGRR}
    Let $G$ be an abelian group of order $r$, and let $\iota$ be the inversion on $G$. Then the proportion of inverse-closed subsets $S\subseteq G$ with $\Aut(\Cay(G,S))=R(G)\rtimes\la\iota\ra$ approaches to $1$ as $r\to \infty$.
\end{corollary}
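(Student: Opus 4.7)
The plan is to deduce Corollary~\ref{corGRR} directly from Corollary~\ref{cor}, using the fact that passing to the standard double cover only enlarges the automorphism group in a controlled way. Concretely, for any inverse-closed subset $S\subseteq G$, the right regular representation $R(G)$ acts as automorphisms of $\Cay(G,S)$, and since $S$ is inverse-closed the inversion $\iota$ also does; thus
\[
    R(G)\rtimes\la\iota\ra \leq \Aut(\Cay(G,S)).
\]
Combining this with the general inclusion $\Aut(\Cay(G,S))\times C_2 \leq \Aut(D(\Cay(G,S)))$ recorded at the beginning of the introduction yields the chain
\[
    (R(G)\rtimes\la\iota\ra)\times C_2 \;\leq\; \Aut(\Cay(G,S))\times C_2 \;\leq\; \Aut(D(\Cay(G,S))).
\]

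With this chain in hand, the rest is immediate. Whenever $S$ satisfies the hypothesis of Corollary~\ref{cor}, that is, $\Aut(D(\Cay(G,S)))=(R(G)\rtimes\la\iota\ra)\times C_2$, the two outer groups in the chain coincide, and the sandwich forces $\Aut(\Cay(G,S))\times C_2=(R(G)\rtimes\la\iota\ra)\times C_2$, equivalently $\Aut(\Cay(G,S))=R(G)\rtimes\la\iota\ra$. Hence the set of inverse-closed $S$ counted in Corollary~\ref{corGRR} contains the set counted in Corollary~\ref{cor}, and so its proportion is at least as large. Since the latter proportion tends to $1$ as $r\to\infty$ by Corollary~\ref{cor}, the same holds for the former, giving Corollary~\ref{corGRR}. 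There is no genuine obstacle here: the statement is flagged as a corollary precisely because the hard asymptotic enumeration has already been absorbed into Theorem~\ref{thmNumber} and Corollary~\ref{cor}, and the passage from double-cover automorphisms to base-graph automorphisms is a one-line subgroup argument.
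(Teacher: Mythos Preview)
Your proof is correct and follows exactly the same approach as the paper: both deduce the corollary from Corollary~\ref{cor} via the observations $R(G)\rtimes\la\iota\ra \leq \Aut(\Cay(G,S))$ and $\Aut(\Cay(G,S))\times C_2\leq \Aut(D(\Cay(G,S)))$, then use the resulting sandwich to force equality. The paper states this in a single line, whereas you have written out the containment chain and the squeeze explicitly, but the content is identical.
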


An \emph{unlabeled} Cayley graph on a group $G$ is defined as an equivalence class of Cayley graphs of $G$ under the relation of graph isomorphism. Throughout, we shall not distinguish between an equivalence class and any of its representatives. With this convention, we obtain the following unlabeled analogue of Theorem~\ref{thmNumber} and Corollaries~\ref{cor} and~\ref{corGRR}. 

For each $\delta\in(0,1/2)$, let
\begin{equation}\label{eqK}
    k_\delta(r)=\frac{h_\delta(r)}{1-h_\delta(r)}\cdot 2^{(\log_2r)^2+\log_2r},
\end{equation}
where $h_\delta(r)$ is defined in~\eqref{eqHdelta}.
It is clear that $k_\delta(r)$ approaches to $0$ as $r\to \infty$.

\begin{theorem}\label{thmIso}
    Let $G$ be an abelian group of order $r$ and exponent greater than $2$, let $\iota$ be the inversion on $G$, and let $\delta\in(0,1/2)$. Then the proportion of unlabeled Cayley graphs $\Gamma$ on $G$ with
    \[
    \Aut(D(\Gamma)) = (R(G) \rtimes \langle \iota \rangle) \times C_2
    \]
    is at least $1 - k_\delta(r)$, where $k_\delta(r)$ is defined in~\eqref{eqK}. In particular, the proportion of unlabeled Cayley graphs of $G$ that are stable is at least $1 - k_\delta(r)$.
\end{theorem}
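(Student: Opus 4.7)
The plan is to deduce the unlabeled statement from Theorem~\ref{thmNumber} by bounding the size of each graph-isomorphism class of Cayley graphs on $G$ whose connection set satisfies the good automorphism condition. Write $\mathcal{X}$ for the set of inverse-closed subsets of $G$, set
\[
\mathcal{X}_g=\{S\in\mathcal{X}:\Aut(D(\Cay(G,S)))=(R(G)\rtimes\la\iota\ra)\times C_2\}\quad\text{and}\quad\mathcal{X}_b=\mathcal{X}\setminus\mathcal{X}_g,
\]
so that Theorem~\ref{thmNumber} yields $|\mathcal{X}_b|\leq h_\delta(r)|\mathcal{X}|$. The good condition is invariant under graph isomorphism among Cayley graphs on $G$: any $T\in\mathcal{X}$ with $\Cay(G,T)\cong\Cay(G,S)$ for some $S\in\mathcal{X}_g$ automatically has $R(G)\rtimes\la\iota\ra\leq\Aut(\Cay(G,T))$ of the same order as $\Aut(\Cay(G,S))$, forcing equality, and similarly for the double cover; so it suffices to produce an upper bound on the size of each graph-isomorphism class contained in $\mathcal{X}_g$.

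The core step is to establish a CI-type property: for $S\in\mathcal{X}_g$, if $\Cay(G,T)\cong\Cay(G,S)$ with $T\in\mathcal{X}$, then $T=\alpha(S)$ for some $\alpha\in\Aut(G)$. This reduces to showing that the only subgroup $K$ of $H:=\Aut(\Cay(G,S))=R(G)\rtimes\la\iota\ra$ of order $r$ which acts regularly on $G$ and is isomorphic to $G$ is $R(G)$ itself, since then any graph isomorphism $\Cay(G,S)\to\Cay(G,T)$ conjugates $R(G)$ into the unique such $K=R(G)$, hence lies in $\Nor_{\Sym(G)}(R(G))=R(G)\rtimes\Aut(G)$ and induces an element of $\Aut(G)$ carrying $S$ to $T$. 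To prove the claim, observe that such a $K$ has index $2$ in $H$, so $K=\ker\phi$ for some homomorphism $\phi\colon H\to C_2$, and regularity forces $\phi(\iota)=1$ because $\iota$ fixes the identity of $G$. The identities $(\iota g)^2=1$ and $(\iota g)h(\iota g)^{-1}=h^{-1}$ for $g,h\in R(G)$ then imply that if $\phi|_{R(G)}$ were nontrivial, then $K$ would be a semidirect product of $\ker(\phi|_{R(G)})$ by an involution inverting it, so $K\cong G$ abelian would force $\ker(\phi|_{R(G)})$ and hence $K$ itself to be of exponent at most $2$, contradicting the hypothesis that $G$ has exponent greater than $2$. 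Hence $\phi|_{R(G)}$ is trivial and $K=R(G)$.

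With the CI property in hand, each graph-isomorphism class meeting $\mathcal{X}_g$ has size at most $|\Aut(G)|$, so writing $M_g$ and $M_b$ for the numbers of good and bad unlabeled Cayley graphs on $G$ we obtain $M_g\geq|\mathcal{X}_g|/|\Aut(G)|$ and $M_b\leq|\mathcal{X}_b|$, whence
\[
\frac{M_b}{M_g+M_b}\leq\frac{M_b}{M_g}\leq\frac{|\Aut(G)|\cdot|\mathcal{X}_b|}{|\mathcal{X}_g|}\leq\frac{h_\delta(r)}{1-h_\delta(r)}\cdot|\Aut(G)|.
\]
Combined with the standard bound $|\Aut(G)|\leq 2^{(\log_2r)^2+\log_2r}$ for abelian $G$ of order $r$, obtained via the Sylow decomposition $\Aut(G)=\prod_p\Aut(G_p)$ and a straightforward estimate on each $|\Aut(G_p)|$, this yields the claimed $1-k_\delta(r)$ lower bound. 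The hardest step is the CI claim: the exponent-greater-than-$2$ hypothesis enters precisely to rule out the elementary abelian alternative, without which a single isomorphism class could contain many more than $|\Aut(G)|$ connection sets and the orbit-size bound would break.
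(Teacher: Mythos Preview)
Your argument is correct and follows the same route as the paper: use Theorem~\ref{thmNumber} to bound the bad labeled sets, then bound the size of each good isomorphism class by showing that any graph isomorphism between two good Cayley graphs must normalize $R(G)$ and hence lie in $\Hol(G)$. The paper obtains this last step in one line by asserting that $R(G)$ is characteristic in $R(G)\rtimes\langle\iota\rangle$ and then bounds class sizes by $|\Hol(G)|$, whereas you supply the argument explicitly via your analysis of regular index-$2$ subgroups and thereby obtain the slightly sharper class-size bound $|\Aut(G)|$; both lead to the same $k_\delta(r)$. One small slip: your line ``regularity forces $\phi(\iota)=1$'' should read $\phi(\iota)\neq 1$ (equivalently $\iota\notin K$), since $\iota$ fixes the identity of $G$ and therefore cannot lie in a regular subgroup---the remainder of your argument is consistent with this corrected reading.
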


\begin{corollary}\label{corIso1}
    Let $G$ be an abelian group of order $r$, and let $\iota$ be the inversion on $G$. Then the proportion of unlabeled Cayley graphs $\Gamma$ on $G$ with
    \[
    \Aut(D(\Gamma)) = (R(G) \rtimes \langle \iota \rangle) \times C_2
    \]
    approaches to $1$ as $r\to \infty$. In particular, the proportion of unlabeled Cayley graphs of $G$ that are stable approaches to $1$ as $r\to \infty$, and the proportion of unlabeled Cayley graphs $\Gamma$ on $G$ with $\Aut(\Gamma)=R(G)\rtimes\la\iota\ra$ approaches to $1$ as $r\to \infty$.
\end{corollary}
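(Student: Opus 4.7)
The strategy is to deduce Corollary~\ref{corIso1} from Theorem~\ref{thmIso} after handling the elementary abelian $2$-group case separately; the two ``in particular'' clauses will then follow formally from the main statement. So the plan has three parts: verify the limit $k_\delta(r)\to 0$, dispose of $G\cong C_2^n$, and extract the stability and automorphism-group consequences.

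First I would verify that $k_\delta(r)\to 0$ as $r\to\infty$ for any fixed $\delta\in(0,1/2)$. Decomposing $h_\delta(r)$ into its two summands from~\eqref{eqHdelta}, the first is of the form $2^{-r/24+O(r^{2\delta}(\log_2 r)^2)}$, which decays genuinely exponentially in $r$ because $2\delta<1$ and therefore easily dominates the factor $2^{(\log_2 r)^2+\log_2 r}$ appearing in~\eqref{eqK}. The second summand, after multiplication by $2^{(\log_2 r)^2+\log_2 r}$, becomes $2^{-(2/25)r^\delta+O((\log_2 r)^2)}$, which tends to $0$ since $r^\delta$ outpaces $(\log_2 r)^2$ for any $\delta>0$. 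The factor $1/(1-h_\delta(r))$ converges to $1$, so $k_\delta(r)\to 0$. For $G$ of exponent greater than $2$, Theorem~\ref{thmIso} then directly yields that the proportion of unlabeled Cayley graphs $\Gamma$ on $G$ with $\Aut(D(\Gamma))=(R(G)\rtimes\la\iota\ra)\times C_2$ is at least $1-k_\delta(r)$, which tends to $1$.

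The remaining case is $G\cong C_2^n$ with $r=2^n$, where $\iota=1$ and $R(G)\rtimes\la\iota\ra=R(G)$. Here I would invoke~\cite[Theorem~1.2(b)]{GSX2025}, the same external input used in the proof of Corollary~\ref{cor}, and pass from the labeled bound to an unlabeled one via the standard estimate that the unlabeled error is at most $|\Aut(G)|$ times the labeled error. Since $|\Aut(C_2^n)|=|\GL_n(2)|<2^{(\log_2 r)^2}$, this factor is comfortably absorbed by the strong labeled decay rate of~\cite[Theorem~1.2(b)]{GSX2025}. To finish, suppose that $\Aut(D(\Gamma))=(R(G)\rtimes\la\iota\ra)\times C_2$; since $R(G)\rtimes\la\iota\ra\leq\Aut(\Gamma)$ and $\Aut(\Gamma)\times C_2\leq\Aut(D(\Gamma))$, the chain
\[
    (R(G)\rtimes\la\iota\ra)\times C_2\leq\Aut(\Gamma)\times C_2\leq\Aut(D(\Gamma))=(R(G)\rtimes\la\iota\ra)\times C_2
\]
forces $\Aut(\Gamma)=R(G)\rtimes\la\iota\ra$ and $\Aut(D(\Gamma))=\Aut(\Gamma)\times C_2$, the latter being the very definition of stability. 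The delicate step of the plan is this handling of the exponent-$2$ case: one must check that the labeled decay in~\cite[Theorem~1.2(b)]{GSX2025} is strong enough to absorb the $|\Aut(C_2^n)|$ blow-up in the labeled-to-unlabeled conversion, in perfect analogy with how the factor $2^{(\log_2 r)^2+\log_2 r}$ is absorbed in~\eqref{eqK} for exponent $>2$.
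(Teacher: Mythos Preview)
Your proposal is correct and follows essentially the same plan as the paper: split off the exponent-$2$ case, invoke Theorem~\ref{thmIso} for exponent greater than $2$, and derive the two ``in particular'' clauses from the chain $(R(G)\rtimes\la\iota\ra)\times C_2\leq\Aut(\Gamma)\times C_2\leq\Aut(D(\Gamma))$. The one difference is in handling $G\cong C_2^n$: the paper cites~\cite[Theorem~1.3(b)]{GSX2025}, which is already the \emph{unlabeled} statement, so no labeled-to-unlabeled conversion is needed; you instead cite the labeled~\cite[Theorem~1.2(b)]{GSX2025} and propose to do the conversion by hand. Your route works too (the blow-up factor is really $|\Hol(G)|\leq 2^{(\log_2 r)^2+\log_2 r}$ rather than $|\Aut(G)|$, matching the argument in the proof of Theorem~\ref{thmIso}, but this is harmless), though it is slightly longer than simply citing the unlabeled result already available in~\cite{GSX2025}.
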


A finite Cayley graph with the ``smallest'' possible full automorphism group is called a \emph{most rigid representation} (MRR) of its underlying group. Our Corollary~\ref{corGRR} implies that almost all finite abelian Cayley graphs are MRRs, thereby recovering the main result of~\cite{DSV2016}. Moreover, the main results of~\cite{DSV2016,MSV2015,XZ2023} together show that almost all finite Cayley graphs are MRRs. Motivated by this, we propose the following related conjecture.

\begin{conjecture}
For a group $G$ of order $r$, the proportion of inverse-closed subsets $S$ of $G$ such that $\Cay(G,S)$ is stable approaches $1$ as $r$ tends to infinity.
\end{conjecture}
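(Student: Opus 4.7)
The plan is to adapt the strategy behind Theorem~\ref{thmNumber} from the abelian setting to arbitrary finite groups. I would proceed in two natural steps, mirroring the abelian case. First, one invokes the combined asymptotic enumeration of most-rigid Cayley graphs on arbitrary finite groups from the series~\cite{DSV2016,MMS2022,MS2021,MSV2015,XZ2023}: almost every inverse-closed $S\subseteq G$ gives a Cayley graph whose automorphism group is as small as possible, and in particular $\Cay(G,S)$ has no twin vertices and is connected. A straightforward separate count rules out a negligible fraction of $S$ for which the graph is bipartite, so the classical trivial obstructions to stability are all absent. This reduces the conjecture to showing that, among the remaining ``generic'' $S$, the double cover $D(\Cay(G,S))$ almost never admits an unexpected automorphism.

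The second step would be a combinatorial count of the ``bad'' generic $S$. As in the abelian case, unexpected automorphisms of $D(\Cay(G,S))$ correspond to \emph{two-fold automorphisms} of $\Cay(G,S)$: pairs $(\alpha,\beta)$ of permutations of $G$ with $\alpha\neq\beta$ such that $u\sim v$ in $\Cay(G,S)$ iff $\alpha(u)\sim\beta(v)$. After normalising $\alpha$ to fix the identity and stripping the right regular action from $\beta$, this becomes a rigid system of translate-type constraints on the indicator function of $S$. For each surviving pair $(\alpha,\beta)$ one would show that the number of compatible inverse-closed $S$ is at most $2^{r/2-f(r)}$ with $f(r)\to\infty$ fast enough to dominate the logarithm of the total number of such pairs.

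The main obstacle lies precisely in producing such a rate $f(r)$. The abelian proof leverages the inversion $\iota$, which is a canonical non-trivial element lying in $\Aut(G,S)$ for every inverse-closed $S$, so that the ``bad'' $S$ form a rigidly structured subfamily of $2^G$. For a general (and especially non-abelian) $G$, the group $\Aut(G,S)$ is typically trivial for generic $S$, so this uniformity disappears and one must argue pair-by-pair. A natural way forward is to split into cases by the structure of $G$ --- groups with a large abelian direct factor (which reduce to Theorem~\ref{thmNumber}), the remaining nilpotent and solvable groups, and finally (almost) simple groups --- applying the sharp counting estimates already developed for MRR enumeration in each family. I expect the almost-simple case to demand the most delicate analysis, since there the outer automorphism group is small and one has few global symmetries available to organise the count.
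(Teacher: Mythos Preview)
The statement you are addressing is a \emph{conjecture}: the paper poses it as an open problem immediately after Corollary~\ref{corIso1} and provides no proof. There is therefore nothing in the paper to compare your proposal against, and your text should be read as a research programme rather than a proof.

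As a programme it has the right shape, but the central step is a genuine gap rather than a mere technicality. After reducing to generic $S$ (connected, non-bipartite, twin-free, with $\Aut(\Cay(G,S))$ minimal), you propose to count, for each two-fold automorphism $(\alpha,\beta)$ with $\alpha\neq\beta$, the number of compatible inverse-closed $S$, and to show this is at most $2^{\bfc(G)-f(r)}$ for some $f(r)$ growing fast enough to beat the number of pairs $(\alpha,\beta)$. But there are up to $(r!)^2$ such pairs, not merely $|\Hol(G)|^2$; your normalisation (fixing $1$ under $\alpha$ and stripping a right translation from $\beta$) still leaves roughly $(r-1)!\cdot r!$ candidate pairs, and nothing forces $\alpha,\beta$ to lie in, or even to be conjugate into, $\Hol(G)$. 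The ``translate-type constraints'' you invoke are precisely what one obtains when $\alpha,\beta\in\Hol(G)$, and the abelian proof works because the structural input (Lemma~\ref{lmXplus}, taken from~\cite[Theorem~3.2]{DSV2016}) pins down the overgroups of $R(G)$ inside $B(S)$ so tightly that only a polynomially bounded family of such overgroups can occur. No analogue of that structural lemma is currently known for arbitrary $G$, and without it the pair-by-pair count cannot be summed.

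Your proposed case split (large abelian factor / nilpotent or solvable / almost simple) is also not a partition of all finite groups, and the reduction of the first case to Theorem~\ref{thmNumber} is not immediate: even if $G=A\times H$ with $A$ abelian, an unexpected automorphism of $D(\Cay(G,S))$ need not respect the direct product decomposition. In short, the proposal identifies the correct target and the correct obstacle, but does not yet supply the missing structural ingredient that would turn the outline into a proof.
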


The paper is organized as follows. In Section \ref{sec2}, we present several preliminary results. In Section \ref{sec3}, we formulate the problem of bounding the proportion of labeled unstable Cayley graphs of an abelian group, which then undergoes a three-step reduction in Subsections \ref{subsec31}, \ref{subsec32} and \ref{subsec33}. Finally, in Section \ref{sec4}, using the reformulated version of the problem, we prove Theorem \ref{thmNumber} in Subsection \ref{subsec44} along with its corollaries. The unlabeled analogue, i.e., Theorem \ref{thmIso} and its corollaries, are then established in Subsection \ref{subsec45}.

\section{Preliminaries}\label{sec2}

For a group $G$ and $S$ an inverse-closed subset of $G$, the \emph{Cayley graph} of $G$ with \emph{connection set} $S$, denoted by $\Cay(G,S)$, is defined to be the graph with vertex set $G$ such that $x$ and $y$ are adjacent if and only if $yx^{-1}\in S$. Note that we do not exclude the possibility of $S$ containing the identity of $G$, in which case the corresponding Cayley graph has a loop at every vertex.

In this paper, $U-V:=\{u\in U\mid u\notin V\}$ denotes the set difference for sets $U$ and $V$, and $U\sqcup V$ denotes the disjoint union of $U$ and $V$. For a positive integer $n$, denote by $\pi(n)$ the set of prime divisors of $n$, denote by $C_n$ the cyclic group of order $n$, and if $n$ is a power of a prime $p$, then denote by $E_n$ the elementary abelian $p$-group of order $n$. Throughout this paper, we fix $G$ to be an abelian group of order $r$ and of exponent greater than $2$, and fix the inversion mapping
 \[
    \iota\colon\, G\to G,\ \ g\mapsto g^{-1}.
 \]
 For an element $t\in G$, let $R(t)$ be the permutation on $G$ sending each $g\in G$ to $gt$. For a subset $T\subseteq G$, denote $R(T)=\{R(t)\mid t\in T\}$.

 For subgroups $H$ and $K$ of a group $X$, let $X/H$ denote the set of right cosets of $H$ in $X$, and let $H\backslash X/K=\{HxK\mid x\in X\}$ denote the set of double cosets of $H$ and $K$ in $X$. Denote the center of $X$ by $\Center(X)$, the normalizer of $H$ in $X$ by $\Nor_X(H)$, the core $\bigcap_{x\in X}H^x$ of $H$ in $X$ by $\Core_X(H)$, and the order of an element $x\in X$ by $|x|$. For $T\subseteq X$, denote
 \[
    \calI(T)=\{t\in T\mid |t|\leq 2\} \ \text{ and }\ \bfc(T)=\frac{|T|+|\calI(T)|}{2}.
 \]
 Observe that, if $T$ is inverse-closed, then the number of inverse-closed subsets of $T$ is $2^{\bfc(T)}$ (see~\cite[Lemma~2.2]{S2021}, for instance).

  \begin{definition}[{\cite[Definition~2.1]{DX2000}}]\label{defBiCos}
    Let $X$ be a group with subgroups $H$ and $K$, and let $D$ be a union of double cosets of $K$ and $H$ in $X$, namely, $D=\bigcup_i Kd_iH$. Define the \emph{bi-coset graph of $X$ with respect to $H$, $K$ and $D$}, denoted by $\BiCos(X,H,K;D)$, to be the bipartite graph with two parts $X/H$ and $X/K$ such that two vertices $Hx$ and $Ky$ from these two parts, respectively, are adjacent if and only if $yx^{-1}\in D$.
\end{definition}

 Let $\Gamma$ be a graph with vertex set $V$, and let $D(\Gamma)$ be the standard double cover of $\Gamma$, so that $D(\Gamma)$ has vertex set $V\times\mathbb{Z}_2=(V\times \{0\})\cup (V\times \{1\})$. For $v\in V$, denote by $\Gamma(v)$ the neighborhood of $v$ in $\Gamma$. For convenience, for each $v\in V$, we write $v^+$ and $v^-$ for the vertices $(v,0)$ and $(v,1)$ in $D(\Gamma)$, respectively. Moreover, for $T\subseteq V$, let
 \[
    T^+=\{v^+\mid v\in T\} \ \text{ and }\  T^-=\{v^-\mid v\in T\}.
 \]
 In particular, $D(\Gamma)$ is bipartite with parts $V^+$ and $V^-$.

  Let $X$ be a subgroup of $\Aut(D(\Gamma))$ that stabilizes $V^+$ (and hence also $V^-$), and let $\varepsilon\in \{+,-\}$. For each $x\in X$, denote by $x^\varepsilon$ the permutation on $V^\varepsilon$ induced by $x$; that is, the image of the homomorphism $X\to\Sym(V^\varepsilon)$ restricting $x$ to $V^\varepsilon$. Denote
 \[
    X^\varepsilon=\{x^\varepsilon\mid x\in X\}\leq \Sym(V^\varepsilon).
 \]

A graph is said to be \emph{twin-free} (also known as \emph{R-thin}, \emph{worthy} or \emph{vertex-determining}) if no two vertices has the same neighborhood.

 \begin{lemma}\label{lmA}
    Let $\Gamma$ be a graph with vertex set $V$, let $A=\Aut(D(\Gamma))$, and let $B$ be the subgroup of $A$ stabilizing $V^+$. Then the following statements hold.
    \begin{enumerate}[\rm(a)]
        \item \label{enuAa} If $\Gamma$ is connected and non-bipartite, then $D(\Gamma)$ is connected, and $A=B\rtimes C_2$.
        \item \label{enuAb} If $\Gamma$ is twin-free, then $B^+\cong B\cong B^-$.
    \end{enumerate}
\end{lemma}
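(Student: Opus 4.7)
The plan for part~(a) is to first exhibit the canonical ``swap'' automorphism $\sigma$ of $D(\Gamma)$ defined by $v^+\leftrightarrow v^-$ for every $v\in V$; it arises from the nontrivial element of $\Aut(K_2)$ acting on the second factor and preserves adjacency because swapping the two coordinates keeps the condition $x\neq y$ intact. To prove connectedness I will use the standard walk-lifting argument: a walk $v_0,\dots,v_\ell$ in $\Gamma$ lifts uniquely to a walk in $D(\Gamma)$ starting at any chosen lift of $v_0$, with terminal fibre determined by the parity of $\ell$. Non-bipartiteness of $\Gamma$ provides an odd closed walk at some vertex $v$, hence a walk in $D(\Gamma)$ from $v^+$ to $v^-$; connectedness of $\Gamma$ then propagates this to show that any two vertices of $D(\Gamma)$ are joined. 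Once $D(\Gamma)$ is known to be connected and bipartite, its bipartition $\{V^+,V^-\}$ is uniquely determined by the graph, so every element of $A$ either stabilises both parts or swaps them, giving $[A:B]\le 2$. Combined with $\sigma\in A\setminus B$ and $\sigma^2=1$, this yields $A=B\rtimes\la\sigma\ra\cong B\rtimes C_2$.

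For part~(b) the plan is to study the restriction homomorphism $\rho^+\colon B\to\Sym(V^+)$ given by $x\mapsto x^+$, whose image is $B^+$ by definition. Surjectivity onto $B^+$ is immediate; the key point is injectivity. Suppose $x\in B$ fixes $V^+$ pointwise, take any $u\in V$, and write $x(u^-)=w^-$. Since $x$ is an automorphism, it sends the neighbourhood $\Gamma(u)^+$ of $u^-$ onto the neighbourhood $\Gamma(w)^+$ of $w^-$; because these neighbourhoods lie in $V^+$ and $x$ acts trivially there, they must coincide, so $\Gamma(w)=\Gamma(u)$. Twin-freeness then forces $w=u$, hence $x$ also fixes $V^-$ pointwise and is the identity. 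This proves $B\cong B^+$, and the same argument with $+$ and $-$ exchanged yields $B\cong B^-$.

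No step appears genuinely hard; the only point requiring brief justification is the uniqueness of the bipartition of a connected bipartite graph (applied to $D(\Gamma)$ in part~(a)), which is what allows one to conclude that every automorphism of $D(\Gamma)$ preserves the set partition $\{V^+,V^-\}$. Everything else reduces to a direct verification.
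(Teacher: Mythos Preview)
Your proposal is correct and follows essentially the same approach as the paper's own proof. The only minor difference is that you sketch a direct walk-lifting argument for the connectedness of $D(\Gamma)$, whereas the paper simply cites this fact from the literature; the remaining steps---uniqueness of the bipartition to get $[A:B]\le 2$, the swap involution to realise the complement $C_2$, and the neighbourhood argument using twin-freeness to show the restriction $B\to B^\varepsilon$ is injective---match the paper exactly.
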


\begin{proof}
    Suppose that $\Gamma$ is connected and non-bipartite. Then it follows from~\cite[Lemma~3.3(1)]{GLP2004} that $D(\Gamma)$ is connected. Hence, $A$ preserves the bipartition $\{V^+,V^-\}$, and so $|A|/|B|\leq 2$. Since the mapping that interchanges $v^+$ and $v^-$ for each $v\in V$ is an involution in $A-B$, we conclude that $A=B\rtimes C_2$, completing the proof of part~\eqref{enuAa}.

    For part~\eqref{enuAb}, suppose that $\Gamma$ is twin-free, and let $\varepsilon\in\{+,-\}$. Since the restriction $x\mapsto x^\varepsilon$ is an epimorphism from $B$ to $B^\varepsilon$, it suffices to show that, given $x^\varepsilon=1$, we have $x=1$. Since $x^\varepsilon=1$, the neighborhood in $D(\Gamma)$ of each vertex in $V^{-\varepsilon}$ is stabilized by $x$. Then since $\Gamma$ is twin-free, it follows that $x$ fixes $V^{-\varepsilon}$ pointwise, which together with $x^\varepsilon=1$ gives that $x=1$. This completes the proof.
\end{proof}

  By abuse of notation, for each $\alpha\in\Sym(G)$, we also write $\alpha$ for the permutation on $G\times \mathbb{Z}_2$ defined by
 \begin{equation}\label{eqAbuse}
    (g,i)\mapsto (g^\alpha,i) \ \text{ for}\  g\in G \ \text{and}\ i\in\mathbb{Z}_2.
 \end{equation}
 Similarly, for each subgroup $H\leq \Sym(G)$, we also use $H$ to denote the corresponding subgroup of $\Sym(G\times \mathbb{Z}_2)$ induced by the subgroup $H$ of $\Sym(G)$. In particular, $R(G)$ can also mean the semiregular subgroup of $\Sym(G\times \mathbb{Z}_2)$ with orbits $G^+$ and $G^-$. Throughout Subsections~\ref{subsec33} and~\ref{subsec41}--\ref{subsec43}, $R(G)$ is understood in this sense, while elsewhere its intended meaning is clear from the context. Moreover,
 for $\varepsilon\in \{+,-\}$ and $H\leq \Sym(G)$, the mapping $h\mapsto h^\varepsilon$ from $H$ to $H^\varepsilon$ is a group isomorphism, where $H^\varepsilon$ is interpreted as the restriction of $H\leq \Sym(G\times\mathbb{Z}_2)$ to $G^\varepsilon$.

\begin{lemma}\label{lmBiCos}
    Let $\Gamma=D(\Sigma)$ for some graph $\Sigma$ with vertex set $V$, let $X$ be a subgroup of $\Aut(\Gamma)$ with orbits $V^+$ and $V^-$, let $H=X_{v^+}$ and $K=X_{v^-}$ for some $v\in V$, let
    \[
        Y=\{x\in X\mid (v^-)^x\in \Gamma(v^+)\},
    \]
    and let $\varphi\colon V^+\cup V^-\to (X/H)\sqcup (X/K)$ mapping $(v^+)^x$ to $Hx$ and $(v^-)^x$ to $Kx$ for each $x\in X$. Then the following statements hold.
    \begin{enumerate}[\rm(a)]
        \item \label{enu22a} $Y$ is a union of double cosets of $K$ and $H$ in $X$.
        \item \label{enu22b} $\varphi$ is a graph isomorphism from $\Gamma$ to $\BiCos(X,H,K;Y)$.
        \item \label{enu22c} Let $\Gamma'=D(\Sigma')$ for some graph $\Sigma'$ with vertex set $V$ such that $X\leq \Aut(\Gamma')$ and $\Gamma^\varphi=(\Gamma')^\varphi$. Then $\Sigma=\Sigma'$.
        \item \label{enu22d} Suppose that $\Sigma=\Cay(G,S)$ for some inverse-closed $S\subseteq G$, that $v=1$, and that $X$ contains $R(G)$. Then for each $g\in G$, it holds that $KR(g)H\subseteq Y$ if and only if $KR(g)^{-1}H \subseteq Y$.
    \end{enumerate}
\end{lemma}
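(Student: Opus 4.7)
The plan is to verify the four claims in the order listed; each is largely a direct unraveling of the definitions, and later parts rely on earlier ones.

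For part~\eqref{enu22a}, I would show directly that $Y$ is closed under left multiplication by $K$ and right multiplication by $H$. Given $x\in Y$, $k\in K$, and $h\in H$, the identity $(v^-)^{kxh}=((v^-)^x)^h$ (using that $k$ fixes $v^-$), combined with the fact that $h$, as an automorphism of $\Gamma$ fixing $v^+$, permutes $\Gamma(v^+)$, shows $(v^-)^{kxh}\in\Gamma(v^+)$ and hence $kxh\in Y$. Thus $KxH\subseteq Y$ for each $x\in Y$, giving the desired union-of-double-cosets structure.

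For part~\eqref{enu22b}, the orbit–stabilizer correspondences $(v^+)^x\leftrightarrow Hx$ and $(v^-)^x\leftrightarrow Kx$ make $\varphi$ a well-defined bijection onto $(X/H)\sqcup(X/K)$. Since $\Gamma=D(\Sigma)$ is bipartite with parts $V^+$ and $V^-$, every edge has the form $\{(v^+)^x,(v^-)^y\}$, and applying the automorphism $x^{-1}\in X\leq\Aut(\Gamma)$ rewrites the adjacency as $v^+\sim(v^-)^{yx^{-1}}$, equivalent to $yx^{-1}\in Y$. This matches the adjacency rule of $\BiCos(X,H,K;Y)$ exactly, so $\varphi$ is a graph isomorphism.

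For part~\eqref{enu22c}, applying part~\eqref{enu22b} to both $\Gamma$ and $\Gamma'$ (note that the orbits and point-stabilizers of $X$ on $V^+\cup V^-$ depend only on $X$ as a permutation group, not on which graph one considers) yields $\Gamma^\varphi=\BiCos(X,H,K;Y)$ and $(\Gamma')^\varphi=\BiCos(X,H,K;Y')$, where $Y'=\{x\in X\mid(v^-)^x\in\Gamma'(v^+)\}$. The hypothesis $\Gamma^\varphi=(\Gamma')^\varphi$ then forces $Y=Y'$, so the edge sets of $\Gamma$ and $\Gamma'$ between $V^+$ and $V^-$ coincide. Since $u\sim w$ in $\Sigma$ iff $u^+\sim w^-$ in $D(\Sigma)$, and similarly for $\Sigma'$, the graphs $\Sigma$ and $\Sigma'$ share the same edge set on the common vertex set $V$, hence are equal.

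Finally, for part~\eqref{enu22d}, the specific choice $v=1$ gives $(v^-)^{R(g)}=g^-$, and the Cayley adjacency $1^+\sim g^-$ in $\Gamma=D(\Cay(G,S))$ is equivalent to $g\in S$; hence $R(g)\in Y$ iff $g\in S$. By part~\eqref{enu22a}, $KR(g)H\subseteq Y$ iff $R(g)\in Y$, so $KR(g)H\subseteq Y$ iff $g\in S$. Replacing $g$ by $g^{-1}$ and using $R(g)^{-1}=R(g^{-1})$ gives $KR(g)^{-1}H\subseteq Y$ iff $g^{-1}\in S$, and the inverse-closedness of $S$ yields the equivalence. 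There is no substantive obstacle in this lemma; the main care required is tracking the conventions on right cosets and the bipartite structure of $\Gamma$, which reduces every adjacency check to a pair in $V^+\times V^-$.
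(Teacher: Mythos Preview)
Your proposal is correct and follows essentially the same approach as the paper. The only difference is that the paper delegates parts~\eqref{enu22a} and~\eqref{enu22b} to~\cite[Lemma~2.4]{DX2000} rather than writing out the direct verification you give, while your arguments for parts~\eqref{enu22c} and~\eqref{enu22d} match the paper's almost verbatim.
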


\begin{proof}
    Parts~\eqref{enu22a} and~\eqref{enu22b} follow from~\cite[Lemma~2.4]{DX2000} and its proof. For part~\eqref{enu22c}, let
    \[
        Y'=\{x\in X\mid (v^-)^x\in \Gamma'(v^+)\}.
    \]
    Then $(\Gamma')^\varphi=\BiCos(X,H,K;Y')$ by part~\eqref{enu22b}. Note that $Y$ is the union of the neighbors (as right cosets of $K$) of the vertex $H$, and similarly for $Y'$. Then we deduce from $\Gamma^\varphi=(\Gamma')^\varphi$ that $Y=Y'$, and then $\Gamma(v^+)=\Gamma'(v^+)$ by the definition of $Y$ and $Y'$. Since $X$ is transitive on $V^+$, it follows for each $u^+\in V^+$ that $\Gamma(u^+)=\Gamma'(u^+)$. Thus, by the definition of standard double cover, we conclude that $\Sigma=\Sigma'$, completing the proof of part~\eqref{enu22c}.

    Now we embark on part~\eqref{enu22d}. Take an arbitrary $g\in G$. We deduce from part~\eqref{enu22a} that $KR(g)H\subseteq Y$ if and only if $R(g)\in Y$. Moreover, the definition of $Y$ indicates that $R(g)\in Y$ if and only if $g^-\in \Gamma(1^+)=S^-$, namely, $g\in S$. Therefore, $KR(g)H\subseteq Y$ if and only if $g\in S$. Similarly, $KR(g)^{-1}H\subseteq Y$ if and only if $g^{-1}\in S$. Hence, part~\eqref{enu22d} holds as $S=S^{-1}$.
\end{proof}

\begin{definition}\label{defEquiv}
    For $i\in \{1,2\}$, let $R_i\leq X_i\leq \Sym(\Omega_i)$ for some finite set $\Omega_i$. The triples $(\Omega_1,X_1,R_1)$ and $(\Omega_2,X_2,R_2)$ are \emph{equivalent} if there exists a bijection $\varphi\colon\Omega_1\to\Omega_2$ such that
    \[
        X_2=\varphi^{-1}X_1\varphi
        \quad\text{and}\quad
        R_2=\varphi^{-1}R_1\varphi.
    \]
\end{definition}

It is easy to see that Definition~\ref{defEquiv} defines an equivalence relation.

\begin{definition}
    For a group $X$ and subgroups $Y$ and $T$, we call $(X/Y,X,T)$ a \emph{right triple} if $X$ and $T$ act faithfully on $X/Y$ by right multiplication.
\end{definition}

For a group $X$, let $\Hol(X)$ denotes the holomorph of $X$. The following lemma establishes several properties related to the equivalence of triples.

\begin{lemma}\label{lmTriple}
    Let $\Omega$ be a finite set, let $\omega\in\Omega$, and let $X$ be a subgroup of $\Sym(\Omega)$ containing a transitive subgroup $T$. Then the following statements hold.
    \begin{enumerate}[\rm(a)]
        \item \label{eunTriplea} $(\Omega,X,T)$ is equivalent to the right triple $(X/X_\omega,X,T)$.
        \item \label{eunTripleb} For each group isomorphism $\psi\colon X\to \tilde{X}$, the right triple $(X/X_\omega,X,T)$ is equivalent to the right triple $(X^\psi/(X_\omega)^\psi,X^\psi,T^\psi)$.
        \item \label{eunTriplec} Suppose that $T$ is regular, and that there are precisely $\ell$ conjugacy classes of subgroups in $X$ isomorphic to $X_\omega$ and $m$ conjugacy classes of subgroups in $X$ isomorphic to $T$. Then
        \[
            \{(\Omega,Y,T)\mid T\leq Y\leq \Sym(\Omega),\ Y\cong X,\ Y_\omega\cong X_\omega\}
        \]
        can be partitioned into at most $\ell m$ equivalence classes.
        \item \label{eunTripled} Suppose that the set $\Lambda:=\{(G,Y,R(G))\mid R(G)\leq Y\leq \Sym(G)\}$ is contained in a single equivalence class. Then $|\Lambda|\leq |\Hol(G)|$.
    \end{enumerate}
\end{lemma}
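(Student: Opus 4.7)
My plan is to handle the four parts in order, since each builds on the previous. For part~(a), the obvious bijection is the orbit--stabilizer map $\varphi\colon\Omega\to X/X_\omega$ defined by $\omega^x\mapsto X_\omega x$, which is well-defined and bijective because $X$ is transitive on $\Omega$. A straightforward calculation will show that conjugation by $\varphi$ carries the action of any $y\in X$ on $\Omega$ to right multiplication by $y$ on $X/X_\omega$; applied to every element of $X$ and separately to every element of $T$, this yields both required equalities in Definition~\ref{defEquiv}. For part~(b), the analogous bijection $\varphi\colon X/X_\omega\to X^\psi/(X_\omega)^\psi$ defined by $X_\omega x\mapsto (X_\omega)^\psi x^\psi$ is visibly well-defined and bijective because $\psi$ is a group isomorphism, and the equivariance identities under right multiplication are immediate from the definition.

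For part~(c), I will use (a) and (b) to reduce each triple $(\Omega,Y,T)$ in the set to a standardized right triple of the form $(X/U,X,V)$, where $U=Y_\omega^\psi\leq X$ and $V=T^\psi\leq X$ for any chosen isomorphism $\psi\colon Y\to X$. Then I will argue that whenever two such standardized triples yield pairs $(U_1,V_1)$ and $(U_2,V_2)$ with $U_1$ conjugate to $U_2$ in $X$ and $V_1$ conjugate to $V_2$ in $X$, they are equivalent, by combining the basepoint ambiguity from (a) (which accounts for $X$-conjugation of the $U$-component) with the automorphism ambiguity of $\psi$ from (b) (which acts on the pair by $\Aut(X)$). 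Consequently each equivalence class is realized by at least one pair of $X$-conjugacy classes, so the number of equivalence classes is bounded above by the number of such pairs, namely $\ell m$.

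For part~(d), I will fix any reference $(G,Y_0,R(G))\in\Lambda$. Since by hypothesis $\Lambda$ lies in a single equivalence class, every other $(G,Y,R(G))\in\Lambda$ admits some $\varphi\in\Sym(G)$ with $Y=\varphi^{-1}Y_0\varphi$ and $R(G)=\varphi^{-1}R(G)\varphi$; the second identity forces $\varphi\in\Nor_{\Sym(G)}(R(G))=\Hol(G)$. Hence the map $\Hol(G)\to\Lambda$ sending $\varphi\mapsto\varphi^{-1}Y_0\varphi$ is surjective, and the inequality $|\Lambda|\leq|\Hol(G)|$ follows at once.

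The delicate step is the bookkeeping in part~(c): I must track carefully how the equivalence relation on triples interacts with the two sources of flexibility---choice of basepoint in (a) and choice of isomorphism $\psi$ in (b)---to confirm that a pair of $X$-conjugacy classes provides a valid upper bound (rather than, say, the coarser count of $\Aut(X)$-orbits on pairs). Parts~(a), (b), and (d) each reduce to routine calculations using the explicit bijections and the fact that $\Hol(G)$ is the normalizer of $R(G)$ in $\Sym(G)$.
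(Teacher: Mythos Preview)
Your proposal is correct and follows essentially the same route as the paper: parts~(a), (b), and (d) match the paper's argument exactly, and your outline for (c)---standardize via (a) and (b), then show that the pair of $X$-conjugacy classes of $(U,V)$ determines the equivalence class---is precisely the paper's strategy. For the ``delicate step'' you flag in (c), the resolution the paper uses (and which you should make explicit) is the factorization $X=V_2U_2$ coming from regularity of $T$ (since $Y=Y_\omega T$ gives $X=U_iV_i$ after applying $\psi$): writing the conjugating element $y\in X$ with $V_1=V_2^{\,y}$ as $y=kz$ with $k\in V_2$ and $z\in U_2$ yields $V_1=V_2^{\,z}$, so the inner automorphism ``conjugation by $z^{-1}$'' fixes $U_2$ while sending $V_1$ to $V_2$, giving the required equivalence $(X/U_2,X,V_1)\sim(X/U_2,X,V_2)$.
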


\begin{proof}
    \eqref{eunTriplea} Since $X$ contains a transitive subgroup $T$, it is direct to verify that the bijection
    \[
        \varphi\colon\, \Omega\to X/X_\omega,\ \ \omega^x\mapsto X_\omega x
    \]
    provides the required equivalence.

    \eqref{eunTripleb} The isomorphism $\psi$ naturally induces a bijection
    \[
        X/X_\omega \to X^\psi/(X_\omega)^\psi, \ \ X_\omega x\mapsto (X_\omega)^\psi x^\psi.
    \]
    It is routine to verify that this bijection gives the required equivalence.

    \eqref{eunTriplec} Let $i\in \{1,2\}$, and let $Y_i$ be a subgroup of $\Sym(\Omega)$ containing $T$ with $Y_i\cong X$ and $(Y_i)_\omega\cong X_\omega$. Let $\psi_i\colon Y_i\to X$ be a group isomorphism, and let $H_i=((Y_i)_\omega)^{\psi_i}$ and $K_i=T^{\psi_i}$. Then $H_i\cong (Y_i)_\omega\cong X_\omega$ and $K_i\cong T$. Moreover, since $T$ is regular, we have $Y_i=(Y_i)_\omega T$, and so $X=Y^{\psi_i}=H_iK_i$. It follows from part~\eqref{eunTriplea} that $(\Omega,Y_i,T)$ is equivalent to the right triple $(Y_i/(Y_i)_\omega,Y_i,T)$, which, by part~\eqref{eunTripleb}, is equivalent to the right triple
    \[
        \big((Y_i)^{\psi_i}/((Y_i)_\omega)^{\psi_i},(Y_i)^{\psi_i},T^{\psi_i} \big)=\big(X/H_i,X,K_i\big).
    \]
    Suppose that there exist $x,y\in X$ such that
    \[
        H_1=H_2^x \ \text{ and }\  K_1=K_2^y.
    \]
    It suffices to show that the right triples $(X/H_1,X,K_1)$ and $(X/H_2,X,K_2)$ are equivalent.

    Noting that $H_1$ and $H_2=H_1^x$ are point-stabilizers of two (possibly the same) points in $X$ under the right multiplication action on $X/H_1$, we derive from part~\eqref{eunTriplea} that the right triples $(X/H_1,X,K_1)$ and $(X/H_2,X,K_1)$ are equivalent. Since $X=K_2H_2$, there exists $z\in H_2$ such that $K_2^z=K_2^y=K_1$. Thus, part~\eqref{eunTripleb} yields that the right triple $(X/H_2,X,K_1)$ is equivalent to the right triple
    \[
        (X^{z^{-1}}/H_2^{z^{-1}},X^{z^{-1}},K_1^{z^{-1}})=(X/H_2,X,K_2).
    \]
    This completes the proof of part~\eqref{eunTriplec}.

    \eqref{eunTripled} Fix some $(G,Y_1,R(G))\in \Lambda$, and take an arbitrary $(G,Y_2,R(G))\in \Lambda$. Since $(G,Y_1,R(G))$ and $(G,Y_2,R(G))$ are equivalent, there exists $\varphi\in\Sym(G)$ such that $Y_2=Y_1^\varphi$ and $R(G)=R(G)^\varphi$. Hence, $\varphi\in \Nor_{\Sym(G)}(R(G))=\Hol(G)$, and so there are at most $|\Hol(G)|$ choices for $\varphi$. Consequently, there are at most $|\Hol(G)|$ choices for $(G,Y_2,R(G))$.
\end{proof}

For a set $\Omega$ and a permutation $\tau\in \Sym(\Omega)$, let $\Fix_\Omega(\tau)$ denote the set of elements of $\Omega$ fixed by $\tau$. Note that each element $\alpha$ in $\Hol(G)=\Nor_{\Sym(G)}(R(G))$ has the form $R(g)\tau$ for some $g\in G$ and $\tau\in\Aut(G)$. While the following lemma is stated for our abelian group $G$, it actually remains valid for any finite group.

\begin{lemma}\label{lmfix}
    Let $\alpha=R(g)\tau$ for some $g\in G$ and $\tau\in\Aut(G)$. Then $\Fix_G(\alpha)$ is either empty or a right coset of the subgroup $\Fix_G(\tau)$ in $G$.
\end{lemma}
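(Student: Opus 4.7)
The plan is straightforward: fix a single point in $\Fix_G(\alpha)$ (assuming one exists) and show that any other fixed point differs from it by an element of $\Fix_G(\tau)$, and conversely that translating a fixed point by $\Fix_G(\tau)$ stays inside $\Fix_G(\alpha)$. This will identify $\Fix_G(\alpha)$ with a right coset $\Fix_G(\tau)\cdot x_0$.

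First I would unpack the action. Since $R(g)$ sends $x\mapsto xg$ and $\tau\in\Aut(G)$, for $x\in G$ we have
\[
    x^\alpha=(xg)^\tau=x^\tau g^\tau,
\]
so $x\in\Fix_G(\alpha)$ if and only if $x=x^\tau g^\tau$, equivalently $x^\tau=x g^{-\tau}$.

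Now suppose $\Fix_G(\alpha)\neq\emptyset$ and pick $x_0\in\Fix_G(\alpha)$, so $x_0^\tau=x_0 g^{-\tau}$. For any other $x_1\in\Fix_G(\alpha)$, I would form the quotient $y=x_1x_0^{-1}$ and compute $y^\tau$ using that $G$ is abelian (so all factors commute):
\[
    y^\tau=x_1^\tau x_0^{-\tau}=(x_1 g^{-\tau})(x_0 g^{-\tau})^{-1}=x_1 x_0^{-1}=y,
\]
which shows $y\in\Fix_G(\tau)$ and hence $x_1\in\Fix_G(\tau)x_0$. For the reverse containment, for any $y\in\Fix_G(\tau)$ I would verify directly that
\[
    (yx_0)^\alpha=(yx_0)^\tau g^\tau=y^\tau x_0^\tau g^\tau=y\cdot x_0=yx_0,
\]
so $yx_0\in\Fix_G(\alpha)$. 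Combining these two inclusions yields $\Fix_G(\alpha)=\Fix_G(\tau)x_0$, a right coset of $\Fix_G(\tau)$ in $G$.

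There is no real obstacle here: the only subtlety is keeping the composition convention for $R(g)\tau$ straight and applying commutativity in the cancellation step, both of which are immediate from the assumptions on $G$. I note that the proof uses abelianness only to rewrite $x_1^\tau x_0^{-\tau}$ as $(x_1x_0^{-1})^\tau$; replacing this with a conjugation argument would give the same statement for an arbitrary finite group, matching the remark preceding the lemma.
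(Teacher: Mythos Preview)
Your proof is correct and follows essentially the same approach as the paper: fix a point of $\Fix_G(\alpha)$, rewrite the fixed-point condition as $(x^\tau)^{-1}x=g^\tau$, and show that any two fixed points differ by an element of $\Fix_G(\tau)$. Your version is in fact slightly more complete, since you explicitly verify the reverse inclusion $\Fix_G(\tau)x_0\subseteq\Fix_G(\alpha)$, which the paper leaves implicit.
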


\begin{proof}
    Suppose $\Fix_G(\alpha)\neq \varnothing$. Fix some $h\in \Fix_G(\alpha)$. Then $h=h^\alpha=h^{R(g)\tau}=(hg)^\tau$, and so $(h^\tau)^{-1}h=g^\tau$. Similarly, for each $k\in\Fix_G(\alpha)$, it holds that $(k^\tau)^{-1}k=g^\tau$, and hence,
    \[
        (h^\tau)^{-1}h=g^\tau=(k^\tau)^{-1}k.
    \]
    This implies $(kh^{-1})^{\tau}=kh^{-1}$, namely, $kh^{-1}\in\Fix_G(\tau)$. Thus, $\Fix_G(\alpha)=\Fix_G(\tau)h^{-1}$ is a right coset of $\Fix_G(\tau)$ in $G$.
\end{proof}

We end this section with a fact that is used repeatedly in the rest of the paper without reference. Any group $X$ of order $n$ can be generated by at most $\lfloor \log_2n\rfloor$ elements. Hence, the number of subgroups of $X$ is at most $n^{\log_2n}=2^{(\log_2n)^2}$, and similarly, $|\Aut(X)|\leq 2^{(\log_2n)^2}$.

\section{Reduction}\label{sec3}

For each inverse-closed subset $S$ of $G$, denote by
\[
    B(S)=\Aut(D(\Cay(G,S)))_{G^+}
\]
the subgroup of $\Aut(D(\Cay(G,S)))$ stabilizing $G^+$.  Then $B(S)$ contains $R(G)$ and $\iota$, where $\iota$ is understood as a permutation on $G\times\mathbb{Z}_2$ according to~\eqref{eqAbuse}. Since $G$ is abelian, it is direct to verify that $\iota^{-1}R(g)\iota=R(g)^{-1}=R(g^\iota)$ for each $g\in G$. Hence,
 \[
    R(G)\rtimes \la\iota\ra\leq B(S).
 \]
The notation $B(S)$ will be used throughout the rest of the paper. Also, let
\begin{align*}
    \calS&= \{S\subseteq G \mid  S=S^{-1}\},\\
    \calS_1 &=\{S\in \calS \mid \Cay(G,S) \text{ is connected, non-bipartite, and twin-free}\},\\
    \calS_2 &=\{S\in \calS_1 \mid  B(S)=\Nor_{B(S)}(R(G))=R(G)\rtimes \la \iota\ra\}.
\end{align*}
We have the following observation.

\begin{lemma}\label{lmStable}
    For each $S\in \calS_2$, we have $\Aut(D(\Cay(G,S)))=(R(G)\rtimes\la\iota\ra)\times C_2$, and so $\Cay(G,S)$ is stable.
\end{lemma}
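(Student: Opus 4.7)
The plan is to apply Lemma~\ref{lmA}\eqref{enuAa} to decompose $\Aut(D(\Cay(G,S)))$, then use the hypothesis $S\in\calS_2$ to identify the factor $B(S)$, and finally check that the extra $C_2$ splits as a direct factor. Throughout, write $\Gamma=\Cay(G,S)$ and $A=\Aut(D(\Gamma))$.

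First I would unpack what $S\in\calS_2\subseteq \calS_1$ gives us: the graph $\Gamma$ is connected, non-bipartite, and twin-free. By Lemma~\ref{lmA}\eqref{enuAa}, this yields $A=B(S)\rtimes\la\tau\ra$, where $\tau$ is the involution of $D(\Gamma)$ that interchanges $g^+$ and $g^-$ for every $g\in G$; the hypothesis $S\in\calS_2$ then forces $B(S)=R(G)\rtimes\la\iota\ra$, and hence $A=(R(G)\rtimes\la\iota\ra)\rtimes\la\tau\ra$.

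Next I would upgrade this semidirect product to a direct product by showing that $\tau$ centralizes $B(S)$. This is where the abuse-of-notation convention~\eqref{eqAbuse} does the work: every element of $R(G)\rtimes\la\iota\ra$ arises from some $\alpha\in\Sym(G)$ acting on $G\times\Z_2$ by $(g,i)\mapsto(g^\alpha,i)$, i.e.\ preserving the second coordinate, whereas $\tau$ changes only the second coordinate, sending $(g,i)$ to $(g,1-i)$. A one-line computation then gives $\tau\alpha=\alpha\tau$, so $\tau$ commutes with every element of $B(S)$, and
\[
    A=(R(G)\rtimes\la\iota\ra)\times\la\tau\ra\cong(R(G)\rtimes\la\iota\ra)\times C_2,
\]
which is the first assertion.

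Finally, for stability, the generic inclusion $\Aut(\Gamma)\times C_2\leq A$ together with the standing inclusion $R(G)\rtimes\la\iota\ra\leq\Aut(\Gamma)$ (valid because $G$ is abelian and $S=S^{-1}$, so $\iota$ is a graph automorphism of $\Gamma$) gives
\[
    2|R(G)\rtimes\la\iota\ra|\leq 2|\Aut(\Gamma)|\leq |A|=2|R(G)\rtimes\la\iota\ra|.
\]
Equality throughout forces $\Aut(\Gamma)=R(G)\rtimes\la\iota\ra$ and $\Aut(D(\Gamma))=\Aut(\Gamma)\times C_2$, so $\Gamma$ is stable. There is no real obstacle in this lemma: once Lemma~\ref{lmA}\eqref{enuAa} is invoked, the only content is the trivial observation that the canonical fiber-swap $\tau$ commutes with the canonical action of $\Sym(G)$ on $G\times\Z_2$ given by~\eqref{eqAbuse}.
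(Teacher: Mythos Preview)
Your proof is correct and follows essentially the same route as the paper: both invoke Lemma~\ref{lmA}\eqref{enuAa} to get $A=B(S)\rtimes C_2$, use $S\in\calS_2$ to identify $B(S)=R(G)\rtimes\la\iota\ra$, and then conclude via the chain $R(G)\rtimes\la\iota\ra\leq\Aut(\Gamma)$ and $\Aut(\Gamma)\times C_2\leq A$ together with an order count. The only cosmetic difference is that you verify the direct-product splitting explicitly by checking $\tau$ commutes with the $\Sym(G)$-action from~\eqref{eqAbuse}, whereas the paper simply appeals to the generic inclusion $\Aut(\Gamma)\times C_2\leq A$ and matches orders.
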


\begin{proof}
    Take an arbitrary $S\in \calS_2$, and let $A(S)=\Aut(D(\Cay(G,S)))$ and $\Gamma=\Cay(G,S)$. Clearly, $R(G)\rtimes \la\iota\ra\leq \Aut(\Gamma)$, and so
    \[
        |R(G)\rtimes \la\iota\ra|\leq |\Aut(\Gamma)|.
    \]
    Noting that $\Aut(\Gamma)$ can be viewed as a subgroup of $B(S)$, we obtain
    \[
        |\Aut(\Gamma)|\leq |B(S)|=|R(G)\rtimes \la\iota\ra|
    \]
    as $S\in\calS_2$. Therefore, $\Aut(\Gamma)=B(S)=R(G)\rtimes\la\iota\ra$. Since $S\in\calS_1$, it follows from Lemma~\ref{lmA}\eqref{enuAa} that $|A(S)|=2|B(S)|$, and thus $|A(S)|=2|\Aut(\Gamma)|$. Hence, the fact $A(S)\geq \Aut(\Gamma)\times C_2$ implies that $A(S)=\Aut(\Gamma)\times C_2=(R(G)\rtimes\la\iota\ra)\times C_2$, which completes the proof.
\end{proof}

In view of this lemma, to prove Theorem~\ref{thmNumber}, it suffices to establish suitable upper bounds for the proportions $|\calS-\calS_1|/|\calS|$ and $|\calS_1- \calS_2|/|\calS|$. We first bound $|\calS-\calS_1|/|\calS|$ in Subsection~\ref{subsec31}, which reduces the problem to bounding $|\calS_1-\calS_2|$. Then a subset $\calS_3$ of  $\calS_1-\calS_2$ with $|\calS_3|$ small enough is defined in Subsection~\ref{subsec32}, further reducing the problem to bounding $|(\calS_1-\calS_2)-\calS_3|$. Finally, we define $\calS_4$ and $\calS_5$ such that $(\calS_1-\calS_2)-\calS_3\subseteq \calS_4\cup\calS_5$ in Subsection~\ref{subsec33}.

\subsection{First reduction}\label{subsec31}

In this subsection, we bound $|\calS-\calS_1|/|\calS|$.

\begin{lemma}\label{lmOrder2}
    For each involution $z$ in $G$, the number of subsets of $G$ stabilized by $\la R(z),\iota\ra$ is $2^{r/4+|\calI(G)|/2}$.
\end{lemma}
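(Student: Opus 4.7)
My plan is to note that a subset of $G$ is stabilised setwise by a subgroup $H\le\Sym(G)$ precisely when it is a union of $H$-orbits, so the count of such subsets equals $2^{k}$, where $k$ is the number of orbits of $H=\la R(z),\iota\ra$ on $G$. The lemma thus reduces to showing $k=r/4+|\calI(G)|/2$.

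I would first check that $|H|=4$. Since $z^{2}=1$ and $G$ has exponent greater than $2$, both $R(z)$ and $\iota$ are involutions in $\Sym(G)$. Commutativity of $G$ gives $\iota R(z)\iota^{-1}=R(z^{-1})=R(z)$, so $H$ is abelian; and $R(z)\neq\iota$ (otherwise evaluating at the identity of $G$ would force $z=1$, contradicting $z$ being an involution), so $H\cong C_{2}\times C_{2}$.

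Next I would apply Burnside's lemma to compute
\[
k=\tfrac{1}{4}\bigl(|\Fix_{G}(1)|+|\Fix_{G}(R(z))|+|\Fix_{G}(\iota)|+|\Fix_{G}(R(z)\iota)|\bigr).
\]
The first summand is $r$; the second is $0$ because $R(z)$ has no fixed points; and the third is $|\calI(G)|$ by the definition of $\calI(G)$. For the fourth term, $g^{R(z)\iota}=g$ iff $g^{-1}z=g$, i.e.\ $g^{2}=z$. Since the squaring map on the abelian group $G$ is a homomorphism with kernel $\calI(G)$, its fibre over $z$ (when nonempty) is a coset of $\calI(G)$ of size $|\calI(G)|$. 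Substituting yields $k=r/4+|\calI(G)|/2$, and hence the number of stabilised subsets is $2^{r/4+|\calI(G)|/2}$, as claimed.

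The only calculational point requiring care is the evaluation $|\Fix_{G}(R(z)\iota)|=|\calI(G)|$, which hinges on $z$ lying in the image of the squaring endomorphism so that its fibre is a full coset of $\calI(G)$; beyond this, the argument is routine Burnside bookkeeping and I do not anticipate further obstacles. An equivalent, slightly more pictorial route is to observe that $\la R(z)\ra$ partitions $G$ into $r/2$ pairs $\{g,gz\}$, that $\iota$ permutes these pairs by sending $\{g,gz\}$ to $\{g^{-1},g^{-1}z\}$, and that the $\iota$-fixed pairs are exactly those indexed by $g$ with $g^{2}\in\{1,z\}$; counting these via the squaring homomorphism gives the same total.
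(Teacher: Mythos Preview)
Your Burnside-lemma computation is a neat alternative to the paper's direct orbit classification (and your ``pictorial'' sketch at the end is essentially the paper's route: it partitions the $\la R(z)\ra$-pairs according to whether $g^{2}\in\{1,z\}$ and counts each class). Both arguments, however, share the same genuine gap.

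The issue is exactly the point you flag but do not resolve: the fourth Burnside summand $|\Fix_G(R(z)\iota)|=|\{g\in G:g^{2}=z\}|$ equals $|\calI(G)|$ only when $z$ lies in the image of the squaring endomorphism, and this can fail. Take $G=C_{2}\times C_{4}$ and $z=(1,0)$: the squaring map has image $\{(0,0),(0,2)\}$, so $z$ is not a square, $|\Fix_G(R(z)\iota)|=0$, and Burnside gives $k=(8+0+4+0)/4=3$, yielding $2^{3}=8$ stabilised subsets rather than the claimed $2^{r/4+|\calI(G)|/2}=2^{4}=16$. The paper's own proof makes the same unjustified move when it writes ``Fix some $x\in G$ such that $x^{2}=z$''. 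Thus the lemma as an \emph{equality} is false in general; what is true---and all that the later applications require---is the upper bound ``at most $2^{r/4+|\calI(G)|/2}$'', and your Burnside calculation delivers this immediately since an empty fibre only decreases $k$.
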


\begin{proof}
    Take an arbitrary $x\in G$, and write $x^{\la R(z)\ra}=\{x,y\}$, where $y=xz$. Since $\la R(z),\iota\ra\cong C_2^2$, the orbit $x^{\la R(z),\iota\ra}$ has length $2$ or $4$. We first show that $|x^{\la R(z),\iota\ra}|=2$ if and only if $x^2=1$ or $x^2=z$. Noting that $|x^{\la R(z),\iota\ra}|=2$ if and only if $\{x,y\}^\iota=\{x,y\}$, we are left to show that $\{x,y\}^\iota=\{x,y\}$ if and only if $x^2=1$ or $x^2=z$. If $x^2=1$, then $x=x^{-1}$ and $y^{-1}=(xz)^{-1}=xz=y$. If $x^2=z$, then $y^{-1}=z^{-1}x^{-1}=zx^{-1}=x^2x^{-1}=x$. Hence, $\{x,y\}^\iota=\{x,y\}$ if $x^2=1$ or $x^2=z$. Conversely, suppose that $\{x,y\}^\iota=\{x,y\}$. Then either $x=x^{-1}$ or $x=y^{-1}=z^{-1}x^{-1}=zx^{-1}$, which implies that either $x^2=1$ or $x^2=z$, as desired.

    Let $\mathcal{O}_1=\{x^{\la R(z),\iota\ra}\mid x^2=1\}$, $\mathcal{O}_2=\{x^{\la R(z),\iota\ra}\mid x^2=z\}$ and $\mathcal{O}_3=\{x^{\la R(z),\iota\ra}\mid x^2\notin\{1,z\}\}$. Note that $\calO_1$, $\calO_2$, and $\calO_3$ form a partition of the set of orbits of $\la R(G),\iota\ra$ on $G$, and the conclusion in the above paragraph shows that each orbit in $\calO_1$ and $\calO_2$ has length $2$, while each orbit in $\calO_3$ has length $4$. Since the union of all orbits in $\calO_1$ is $\calI(G)$, it follows that
    \[
        2|\mathcal{O}_1|=|\calI(G)|.
    \]
    Fix some $x\in G$ such that $x^2=z$. Then for $w\in G$, it holds that $w^2=z$ if and only if $(xw^{-1})^2=1$. Hence, there are $|\calI(G)|$ elements in $G$ whose square equals $z$. Since the union of all orbits in $\calO_2$ is $\{w\in G\mid w^2=z\}$, this implies that
    \[
        2|\mathcal{O}_2|=|\calI(G)|.
    \]
    Since each subset of $G$ that is stabilized by $\la R(z),\iota\ra$ is a union of $\la R(z),\iota\ra$-orbits, we conclude that the number of subsets of $G$ that are stabilized by $\la R(z),\iota\ra$ is
    \[
        2^{|\calO_1|}2^{|\calO_2|}2^{|\calO_3|}=2^{|\calO_1|+|\calO_2|+\frac{r-2|\mathcal{O}_1|-2|\mathcal{O}_2|}{4}}=2^{\frac{r+2|\mathcal{O}_1|+2|\mathcal{O}_2|}{4}}=2^{\frac{r+|\calI(G)|+|\calI(G)|}{4}}=2^{\frac{r}{4}+\frac{|\calI(G)|}{2}},
    \]
    as required.
\end{proof}

The following lemma is not only required for the proof of Proposition~\ref{propS1}, but is also of independent interest.

\begin{lemma}\label{lmTrivial}
    The following statements hold.
    \begin{enumerate}[\rm(a)]
        \item \label{enu34a} The proportion of inverse-closed subsets $S$ of $G$ such that $\Cay(G,S)$ is disconnected is at most $2^{-r/4+(\log_2r)^2}$.
        \item \label{enu34b} The proportion of inverse-closed subsets $S$ of $G$ such that $\Cay(G,S)$ is connected and bipartite is at most $2^{-r/4+(\log_2r)^2}$.
        \item \label{enu34c} The proportion of inverse-closed subsets $S$ of $G$ such that $\Cay(G,S)$ is not twin-free is at most $2^{-r/6+\log_2r+1}$.
    \end{enumerate}
\end{lemma}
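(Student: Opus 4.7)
The denominator in all three ratios is $|\calS|=2^{\bfc(G)}$, so in each case the task is to bound the number of ``bad'' inverse-closed subsets $S$. The common strategy is to fix a suitable obstructing structure and union bound over it, using that $G$ has at most $2^{(\log_2 r)^2}$ subgroups and fewer than $r$ nontrivial elements.

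For part~(a), $\Cay(G,S)$ is disconnected if and only if $\la S\ra$ is a proper subgroup of $G$, so $S$ lies in some proper subgroup $H$. Since $|H|\leq r/2$ by Lagrange and $|\calI(H)|\leq|\calI(G)|$, a direct computation gives
\[
    \bfc(G)-\bfc(H)\geq \frac{r-|H|}{2}\geq \frac{r}{4},
\]
so each $H$ contributes at most $2^{\bfc(G)-r/4}$ inverse-closed subsets. Summing over the at most $2^{(\log_2 r)^2}$ subgroups of $G$ gives the claimed bound. Part~(b) is structurally identical: connectedness together with bipartiteness forces $S$ to lie in the nontrivial coset $G-H$ of some index-two subgroup $H$; this coset is inverse-closed in the abelian setting, and
\[
    \bfc(G)-\bfc(G-H)=\frac{r/2+|\calI(H)|}{2}\geq \frac{r}{4}
\]
again gives a ratio of $2^{-r/4}$ per $H$, and the same subgroup count finishes the argument.

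For part~(c), two distinct vertices $u,v$ have the same neighborhood in $\Cay(G,S)$ precisely when $Suv^{-1}=S$, so $\Cay(G,S)$ fails to be twin-free if and only if there exists $g\in G\setminus\{1\}$ with $Sg=S$. The plan is to union bound over such $g$, splitting on whether $g$ is an involution. If $|g|=2$, then an inverse-closed $R(g)$-invariant set is precisely a $\la R(g),\iota\ra$-invariant subset, so Lemma~\ref{lmOrder2} counts exactly $2^{r/4+|\calI(G)|/2}$ such $S$, yielding a ratio of $2^{-r/4}$. If $|g|\geq 3$, then an inverse-closed $S$ with $Sg=S$ is a union of $\la g\ra$-cosets that is inverse-closed modulo $\la g\ra$, hence corresponds bijectively to an inverse-closed subset of the quotient $G/\la g\ra$; the count is at most $2^{r/|g|}\leq 2^{r/3}$, giving a ratio at most $2^{-r/6}$. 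Summing over the at most $r-1$ choices of $g$ and combining yields
\[
    r\cdot 2^{-r/4}+r\cdot 2^{-r/6}\leq 2^{-r/6+\log_2 r+1},
\]
as required.

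No step is a serious obstacle; the one delicate point is the involution case in~(c), where the number of subsets that are simultaneously inverse-closed and right-$g$-invariant depends on $|\calI(G)|$, and Lemma~\ref{lmOrder2} is precisely the tool that makes the $|\calI(G)|$ contributions in the numerator and denominator cancel cleanly.
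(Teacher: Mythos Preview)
Your proof is correct and follows essentially the same approach as the paper: union-bound over proper subgroups for~(a), over index-two subgroups for~(b), and over nontrivial $g$ with $Sg=S$ split by $|g|=2$ versus $|g|\geq 3$ for~(c), with Lemma~\ref{lmOrder2} handling the involution case. The only cosmetic difference is that in the $|g|\geq 3$ case the paper bounds directly by $2^{r/|g|}$ as the number of unions of $\langle g\rangle$-cosets, whereas you note the (correct but unused) bijection with inverse-closed subsets of $G/\langle g\rangle$ before applying the same crude bound.
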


\begin{proof}
    Recall that the number of inverse-closed subsets of $G$ is
    \[
        2^{\bfc(G)}=2^{\frac{r+|\calI(G)|}{2}}.
    \]
    In the following, we establish upper bounds for the numbers of subsets $S$ in parts~\eqref{enu34a}--\eqref{enu34c}, which gives the corresponding upper bounds for the proportions by dividing $2^{c(G)}$.

    \eqref{enu34a} For $S\subseteq G$, the graph $\Cay(G,S)$ is disconnected if and only if $\la S\ra<G$, which is equivalent to $|\la S\ra|\leq r/2$. We now count the number of inverse-closed subsets $S$ of $G$ with $|\la S\ra|\leq r/2$ in two steps: first count the number of subgroups $H$ of $G$ with $|H|\leq r/2$, and then fix such a subgroup $H$ and count the number of inverse-closed generating sets of $H$. The number of subgroups $H$ of $G$ with $|H|\leq r/2$ is less than the number of subgroups of $G$, which is at most $2^{(\log_2r)^2}$. Now fix a subgroup $H$ of $G$ with $|H|\leq r/2$. Then the number of inverse-closed subsets $S$ of $G$ such that $\la S\ra=H$ is at most
    \[
        2^{\bfc(H)}=2^{\frac{|H|+|\calI(H)|}{2}}\leq 2^{\frac{|H|+|\calI(G)|}{2}}\leq 2^{\frac{r}{4}+\frac{|\calI(G)|}{2}}.
    \]
    Therefore, the number of inverse-closed subsets $S$ of $G$ such that $\Cay(G,S)$ is disconnected is at most
    \[
        2^{(\log_2r)^2}\cdot 2^{\frac{r}{4}+\frac{|\calI(G)|}{2}}=2^{(\log_2r)^2+\frac{r}{4}+\frac{|\calI(G)|}{2}},
    \]
    as desired.

    \eqref{enu34b} If $\Cay(G,S)$ is connected and bipartite, then the part containing $1$ forms a subgroup $H$ of index $2$ in $G$ such that $S\subseteq G- H$. Note that there are at most $2^{(\log_2r)^2}$ subgroups of $G$, and for a fixed subgroup $H$ of $G$ with $|G|/|H|=2$, the number of inverse-closed subsets $S$ of $G- H$ is at most
    \[
        2^{\bfc(G- H)}=2^{\frac{|G- H|}{2}+\frac{|\calI(G- H)|}{2}}\leq 2^{\frac{r}{4}+\frac{|\calI(G)|}{2}}.
    \]
    Consequently, the number of inverse-closed subsets $S$ of $G$ such that $\Cay(G,S)$ is connected and bipartite is at most
    \[
        2^{(\log_2r)^2}\cdot 2^{\frac{r}{4}+\frac{|\calI(G)|}{2}}=2^{(\log_2r)^2+\frac{r}{4}+\frac{|\calI(G)|}{2}},
    \]
    as desired.

    \eqref{enu34c} For each inverse-closed subset $S$ of $G$ such that $\Cay(G,S)$ is not twin-free, we fix a non-identity element $g=g(S)\in G$ (the notation $g(S)$ indicates that the element $g$ depends on $S$) such that $S=Sg$. Hence, $S=S\la g\ra=S^{\la R(g)\ra}$, which together with $S=S^{-1}=S^\iota$ implies that $S$ is stabilized by $\la R(g),\iota\ra$. Let
    \[
        \mathcal{T}=\{S\in\calS\mid \Cay(G,S) \ \text{is not twin-free and}\ |g(S)|=2\}
    \]
    and $\mathcal{T}'=\{g(S)\in G\mid S\in \mathcal{T}\}$. Then
    \[
        \mathcal{T}\subseteq \{S\subseteq G\mid S^{\la R(z),\iota\ra}=S \ \text{for some}\ z\in \mathcal{T}'\}.
    \]
    Hence, since $|\mathcal{T}'|\leq |G|=r$, it follows from Lemma~\ref{lmOrder2} that
    \[
        |\mathcal{T}|\leq r\cdot 2^{\frac{r}{4}+\frac{|\calI(G)|}{2}}= 2^{\frac{r}{4}+\frac{|\calI(G)|}{2}+\log_2r}.
    \]
    To complete the proof, we now define
    \[
        \mathcal{W}=\{S\in\calS\mid \Cay(G,S) \ \text{is not twin-free and}\ |g(S)|\geq 3\}
    \]
    and $\mathcal{W}'=\{g(S)\in G\mid S\in \mathcal{W}\}$. For each $z\in \mathcal{W}'$, since $|z|\geq 3$ and each $S\in \mathcal{W}$ with $g(S)=z$ is a union of $\la R(z)\ra$-orbits, the number of $S\in\mathcal{W}$ such that $g(S)=z$ is at most $2^{r/3}$. Thus,
    \[
        |\mathcal{W}|\leq|\mathcal{W}'|\cdot 2^{\frac{r}{3}}\leq 2^{\frac{r}{3}+\log_2r},
    \]
    as $|\mathcal{W}'|\leq |G|=r$. As a consequence, the number of inverse-closed subsets $S$ of $G$ such that $\Cay(G,S)$ is not twin-free is at most
    \[
        |\mathcal{T}\cup\mathcal{W}|\leq
        2^{\frac{r}{4}+\frac{|\calI(G)|}{2}+\log_2r}+2^{\frac{r}{3}+\log_2r}\leq 2^{\frac{r}{3}+\frac{|\calI(G)|}{2}+\log_2r+1},
    \]
    which completes the proof.
\end{proof}

\begin{remark}
    The conclusions in Lemma~\ref{lmTrivial}\eqref{enu34a}\eqref{enu34b} still hold if $G$ is nonabelian, as can be seen in the proof.
\end{remark}

We now present the main result of this subsection.

\begin{proposition}\label{propS1}
    The proportion $|\calS-\calS_1|/|\calS|$ is at most $2^{-r/6+(\log_2r)^2+2}$.
\end{proposition}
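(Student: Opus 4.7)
The plan is to decompose $\calS-\calS_1$ according to the three failure modes defining $\calS_1$, then apply Lemma~\ref{lmTrivial} termwise and add. Specifically, an inverse-closed subset $S \subseteq G$ lies in $\calS-\calS_1$ if and only if $\Cay(G,S)$ is disconnected, or connected and bipartite, or not twin-free. Writing $\calD$, $\calB$, $\mathcal{N}$ for the sets of $S \in \calS$ satisfying each of these three conditions respectively, we have $\calS - \calS_1 = \calD \cup \calB \cup \mathcal{N}$.

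By Lemma~\ref{lmTrivial}\eqref{enu34a}\eqref{enu34b}\eqref{enu34c}, the three proportions $|\calD|/|\calS|$, $|\calB|/|\calS|$, $|\mathcal{N}|/|\calS|$ are bounded above by $2^{-r/4+(\log_2 r)^2}$, $2^{-r/4+(\log_2 r)^2}$, and $2^{-r/6+\log_2 r+1}$ respectively. A union bound then yields
\[
    \frac{|\calS-\calS_1|}{|\calS|} \;\leq\; 2\cdot 2^{-r/4+(\log_2 r)^2} + 2^{-r/6+\log_2 r+1}.
\]

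To finish, I would observe two elementary inequalities. First, $-r/4 \leq -r/6$ for $r \geq 0$, so $2\cdot 2^{-r/4+(\log_2 r)^2} \leq 2^{-r/6+(\log_2 r)^2+1}$. Second, since $G$ has exponent greater than $2$ we have $r \geq 3$, and hence $\log_2 r \leq (\log_2 r)^2$, giving $2^{-r/6+\log_2 r+1} \leq 2^{-r/6+(\log_2 r)^2+1}$. Adding these two bounds shows that
\[
    \frac{|\calS-\calS_1|}{|\calS|} \;\leq\; 2\cdot 2^{-r/6+(\log_2 r)^2+1} \;=\; 2^{-r/6+(\log_2 r)^2+2},
\]
as required.

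There is no real obstacle here: all the substantive work (counting subsets stabilized by $\langle R(z),\iota\rangle$, controlling generating sets of proper subgroups, and handling index-$2$ subgroups and twin pairs) has already been done in Lemma~\ref{lmOrder2} and Lemma~\ref{lmTrivial}. The present proposition is purely an application of the union bound followed by the observation that the ``not twin-free'' term is the dominant one, so the final bound is governed by the exponent $-r/6$.
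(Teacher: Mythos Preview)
Your proof is correct and follows essentially the same approach as the paper: both decompose $\calS-\calS_1$ into the three failure modes, apply the three parts of Lemma~\ref{lmTrivial} via a union bound, and then absorb the resulting sum into the single term $2^{-r/6+(\log_2 r)^2+2}$. You have simply spelled out the final elementary inequalities (using $-r/4\leq -r/6$ and $\log_2 r\leq (\log_2 r)^2$ for $r\geq 3$) more explicitly than the paper does.
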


\begin{proof}
    If $S\in \calS- \calS_1$, then $\Cay(G,S)$  is disconnected, or connected and bipartite, or not twin-free. Hence, we conclude from Lemma~\ref{lmTrivial} that
    \[
        \frac{|\calS-\calS_1|}{|\calS|}\leq 2^{-\frac{r}{4}+(\log_2r)^2}+2^{-\frac{r}{4}+(\log_2r)^2}+ 2^{-\frac{r}{6}+\log_2r+1}\leq 2^{-\frac{r}{6}+(\log_2r)^2+2}.
    \]
    This completes the proof.
\end{proof}

\subsection{Second reduction}\label{subsec32}

In this subsection, we reduce the problem of bounding $|\calS_1-\calS_2|/|\calS|$ to that of bounding $|(\calS_1-\calS_2)-\calS_3|/|\calS|$, where
\[
    \calS_3:=\{S\in \calS_1 \mid R(G)\rtimes \la \iota\ra<\Nor_{B(S)}(R(G))\}\subseteq \calS_1-\calS_2.
\]
This is accomplished by showing $\calS_3\subseteq \calS_3'$  and bounding the proportion of $|\calS_3'|/|\calS|$, where
\[
    \calS_3':=\{S\in\calS_1\mid S^\alpha=S\ \text{for some}\ \alpha\in \Hol(G)- \{1,\iota\}\}.
\]

\begin{lemma}\label{lm35}
    The set $\calS_3$ is contained in $\calS_3'$.
\end{lemma}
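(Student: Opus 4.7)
The plan is to take $S \in \calS_3$, fix some $\beta \in \Nor_{B(S)}(R(G)) - (R(G) \rtimes \la\iota\ra)$ (which exists by the defining property of $\calS_3$), and build from $\beta$ an explicit element $\alpha \in \Hol(G) - \{1,\iota\}$ with $S^\alpha = S$, thereby placing $S$ in $\calS_3'$.

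First I would restrict $\beta$ to the two $R(G)$-orbits $G^+$ and $G^-$. Since $\beta$ stabilises $G^+$, it induces $\beta^+ \in \Sym(G^+)$ and $\beta^- \in \Sym(G^-)$; identifying each $G^\varepsilon$ with $G$ via $g^\varepsilon \leftrightarrow g$, both of these normalise $R(G)$ in $\Sym(G)$ and hence lie in $\Hol(G)$. The key point is that the automorphism of $R(G)$ (and hence of $G$) induced by conjugation with $\beta \in \Sym(G\times\mathbb{Z}_2)$ agrees with those induced by $\beta^+$ and $\beta^-$ individually; therefore there is a \emph{common} $\tau \in \Aut(G)$ such that $\beta^+ = \tau R(a)$ and $\beta^- = \tau R(b)$ for some $a,b \in G$.

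Next I would exploit that $\beta$ preserves adjacency in $D(\Cay(G,S))$: since $g^+$ and $h^-$ are adjacent iff $hg^{-1} \in S$, applying $\beta$ and using the abelianness of $G$ yields $x \in S \Leftrightarrow ba^{-1}x^\tau \in S$ for every $x \in G$. This is precisely $S^\alpha = S$ with $\alpha := \tau R(ba^{-1}) \in \Hol(G)$.

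The main obstacle I foresee is ruling out $\alpha \in \{1,\iota\}$. If $\alpha = 1$, then $\tau = 1$ and $b = a$, so $\beta^+ = \beta^- = R(a)$; since $S \in \calS_1$ ensures $\Cay(G,S)$ is twin-free, Lemma~\ref{lmA}\eqref{enuAb} shows that $\beta$ is determined by $\beta^+$, forcing $\beta$ to equal the abuse-of-notation element $R(a) \in R(G) \le \Sym(G\times\mathbb{Z}_2)$, a contradiction. The case $\alpha = \iota$ is analogous and forces $\beta = \iota R(a) = R(a^{-1})\iota \in R(G) \rtimes \la\iota\ra$ (using that $G$ is abelian), again a contradiction. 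Hence $\alpha \in \Hol(G) - \{1,\iota\}$ stabilises $S$, and $S \in \calS_3'$.
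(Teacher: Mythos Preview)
Your proof is correct and follows essentially the same approach as the paper: both restrict the chosen element of $\Nor_{B(S)}(R(G)) - (R(G)\rtimes\la\iota\ra)$ to obtain an element of $\Hol(G)$ stabilising $S$, and then invoke twin-freeness via Lemma~\ref{lmA}\eqref{enuAb} to rule out $\alpha\in\{1,\iota\}$. The only cosmetic difference is that the paper works solely on the $G^+$ side---reading off the required translation from the image of $1^-$ and the identity $\Gamma(1^-)=S^+$---rather than explicitly decomposing both $\beta^+$ and $\beta^-$ with a common automorphism part $\tau$ as you do.
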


\begin{proof}
    Take an arbitrary $S\in\calS_3$. Then there exists a non-identity element
    \[
        \alpha\in \Nor_{B(S)}(R(G))-(R(G)\rtimes\la\iota\ra).
    \]
    Writing $\Gamma=D(\Cay(G,S))$ and $(1^-)^{\alpha}=g^-$ for some $g\in G$, we derive that
    \[
        (S^+)^\alpha=\big(\Gamma(1^-)\big)^\alpha=\Gamma\big((1^-)^\alpha\big)=\Gamma(g^-)=(Sg)^+=(S^+)^{R(g)}.
    \]
    As a consequence,
    \begin{equation}\label{eqSrg}
        (S^+)^{\alpha^+R(g^{-1})^+}=S^+.
    \end{equation}
    Note that $\alpha^+R^+(g^{-1})\notin \{1^+,\iota^+\}$ and
    \[
        \alpha^+R(g^{-1})^+\in\Nor_{B(S)^+}(R(G)^+)\leq \Nor_{\Sym(G)^+}(R(G)^+)=\Hol(G)^+.
    \]
    Therefore, there exists $\beta\in\Hol(G)$ such that $\beta^+=\alpha^+R(g^{-1})^+\notin\{1^+,\iota^+\}$. Therefore, this together with~\eqref{eqSrg} shows that $\beta\in \Hol(G)-\{1,\iota\}$ and $S^\beta=S$. Thus, $S\in\calS_3'$, as desired.
\end{proof}

\begin{lemma}\label{lmHol}
    The proportion $|\calS_3'|/|\calS|$ is at most $2^{-r/24+(\log_2r)^2+\log_2r+2}$.
\end{lemma}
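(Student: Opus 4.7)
The plan is to apply a union bound and exploit that, for each $\alpha\in\Hol(G)\setminus\{1,\iota\}$, every inverse-closed $S$ with $S^\alpha=S$ is necessarily a union of orbits of $H_\alpha:=\la\alpha,\iota\ra$ on $G$. This gives
\[
    |\calS_3'|\leq\sum_{\alpha\in\Hol(G)\setminus\{1,\iota\}}2^{n(\alpha)},
\]
where $n(\alpha)$ denotes the number of $H_\alpha$-orbits on $G$. Combining this with $|\Hol(G)|\leq r\cdot 2^{(\log_2 r)^2}$ and $|\calS|=2^{\bfc(G)}=2^{(r+|\calI(G)|)/2}$, the stated ratio $2^{-r/24+(\log_2 r)^2+\log_2 r+2}$ reduces to showing
\[
    n(\alpha)\leq\frac{11r}{24}+\frac{|\calI(G)|}{2}+2\qquad\text{for all }\alpha\in\Hol(G)\setminus\{1,\iota\}.
\]

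I would establish this inequality via case analysis on $\alpha=R(g)\tau$ with $g\in G$ and $\tau\in\Aut(G)$. When $\tau\in\{1,\iota\}$, one has $H_\alpha=\la R(g),\iota\ra$ with $g\ne 1$, so Lemma~\ref{lmOrder2} handles $|g|=2$ exactly (yielding $n(\alpha)=r/4+|\calI(G)|/2$), while the trivial $n(\alpha)\leq r/|g|\leq r/3$ handles $|g|\geq 3$. When $\tau\notin\{1,\iota\}$, set $K:=\la\tau,\iota\ra\leq\Aut(G)$; this is abelian of order at least $4$ since $\iota\in\Center(\Aut(G))$. Writing $H_\alpha\cap R(G)=R(N)$ for the $K$-invariant subgroup $N\leq G$ arising from the relation $\alpha R(h)\alpha^{-1}=R(h^\tau)$, the quotient $H_\alpha/R(N)\cong K$ acts on $G/N$, and $n(\alpha)$ equals the orbit count of this action, which is in turn bounded above by $n(K)$, the orbit count of $K$ acting on $G$ itself.

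For $n(K)$ I would combine two Burnside-based estimates. Using $|\Fix_G(\iota)|=|\calI(G)|$ together with the trivial $|\Fix_G(\sigma)|\leq r/2$ for the remaining $\sigma\in K\setminus\{1\}$ gives
\[
    n(K)\leq\frac{r+|\calI(G)|+(|K|-2)\cdot(r/2)}{|K|}\leq\frac{r}{2}+\frac{|\calI(G)|}{4}.
\]
For small $|\calI(G)|$, I would instead apply the subgroup product formula: since $\Fix_G(\tau)\cap\Fix_G(\tau\iota)$ consists exactly of those $g$ with $g^\tau=g=g^{-1}$, it lies in $\calI(G)$, so
\[
    |\Fix_G(\tau)|\cdot|\Fix_G(\tau\iota)|=|\Fix_G(\tau)+\Fix_G(\tau\iota)|\cdot|\Fix_G(\tau)\cap\Fix_G(\tau\iota)|\leq r\cdot|\calI(G)|,
\]
forcing $|\Fix_G(\tau)|+|\Fix_G(\tau\iota)|\leq r/2+2|\calI(G)|$ (maximizing $x+y$ over $xy\leq r|\calI(G)|$ with $x,y\leq r/2$) and hence $n(K)\leq 3r/8+3|\calI(G)|/4$. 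Taking the minimum of the two bounds yields $n(\alpha)\leq 11r/24+|\calI(G)|/2+2$ regardless of whether $|\calI(G)|$ lies above or below $r/4$.

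The main obstacle is the subcase $\tau\notin\{1,\iota\}$, where $|\Fix_G(\tau)|$ and $|\Fix_G(\tau\iota)|$ could each individually be as large as $r/2$ (which would naively give $n(K)\approx 3r/4$, well above the target). One cannot reach the required bound without exploiting the product-formula tradeoff above, whose key ingredient is the containment $\Fix_G(\tau)\cap\Fix_G(\tau\iota)\subseteq\calI(G)$. The exponent-greater-than-$2$ hypothesis on $G$ ensures that $\calI(G)$ is a proper subgroup of $G$, which makes this tradeoff substantive.
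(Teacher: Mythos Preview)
Your approach is essentially correct and genuinely different from the paper's. The paper partitions according to the order of the commutator $z=[\alpha,\iota]\in R(G)$: for $|z|\geq 3$ one uses the crude bound $2^{r/3}$; for $|z|=2$ one invokes Lemma~\ref{lmOrder2}; and for $z=1$ (the hardest case) the paper imports the orbit-counting argument of~\cite[Lemma~5.5]{DSV2016}. You instead split on whether the $\Aut(G)$-part $\tau$ of $\alpha$ lies in $\{1,\iota\}$, reduce to bounding the orbit count $n(K)$ of $K=\langle\tau,\iota\rangle$ via $|\Fix_G(R(h)\sigma)|\leq|\Fix_G(\sigma)|$ (Lemma~\ref{lmfix}), and then control $n(K)$ by a self-contained Burnside argument exploiting the key observation $\Fix_G(\sigma)\cap\Fix_G(\sigma\iota)\subseteq\calI(G)$. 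Your route is more elementary in that it avoids the external citation, and the product-formula trick is a clean substitute for the case analysis in~\cite{DSV2016}.

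There is, however, one gap you should close. Your deduction ``$|\Fix_G(\tau)|+|\Fix_G(\tau\iota)|\leq r/2+2|\calI(G)|$, hence $n(K)\leq 3r/8+3|\calI(G)|/4$'' is only valid as written when $|K|=4$. If $|K|>4$ and you bound the remaining $|K|-4$ fixed-point counts merely by $r/2$, Burnside gives $n(K)\leq r/2+(3|\calI(G)|-r/2)/|K|$, which for small $|\calI(G)|$ and large $|K|$ tends to $r/2$ and can exceed both $3r/8+3|\calI(G)|/4$ and your target $11r/24+|\calI(G)|/2$. The fix is immediate: apply the same product-formula bound to \emph{every} nontrivial coset $\{\sigma,\sigma\iota\}$ of $\langle\iota\rangle$ in $K$, not just $\{\tau,\tau\iota\}$. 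Summing $r+|\calI(G)|$ for the trivial coset and $r/2+2|\calI(G)|$ for each of the remaining $|K|/2-1$ cosets yields
\[
n(K)\leq\frac{r/2-|\calI(G)|}{|K|}+\frac{r}{4}+|\calI(G)|\leq\frac{3r}{8}+\frac{3|\calI(G)|}{4}
\]
for all $|K|\geq 4$, and your argument then goes through unchanged.
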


\begin{proof}
    For each $S\in\calS_3'$, we fix some $\alpha=\alpha(S)\in \Hol(G)-\{1,\iota\}$ (the notation $\alpha(S)$ indicates that $\alpha$ depends on $S$) such that $S^{\alpha}=S$. Note from $S=S^{-1}$ that $S^{\alpha}=S$ if and only if $S^{\la \alpha,\iota\ra}=S$. Since $\Hol(G)=R(G)\rtimes\Aut(G)$, it is direct to verify that $[\alpha,\iota]=\alpha^{-1}\iota^{-1} \alpha\iota\in R(G)$. Let $z(S)=[\alpha(S),\iota]$, let
    \[
        \mathcal{W}_1=\{S\in\calS_3'\mid z(S)=1\}, \ \ \mathcal{W}_2=\{S\in\calS_3'\mid |z(S)|=2\},\ \  \mathcal{W}_3=\{S\in\calS_3'\mid |z(S)|\geq 3\},
    \]
    and let $\mathcal{W}_i'=\{\alpha(S)\mid S\in\mathcal{W}_i\}$ for $i\in \{1,2,3\}$. Clearly, $|\calS_3'|=|\mathcal{W}_1|+|\mathcal{W}_2|+|\mathcal{W}_3|$, and for $i\in \{1,2,3\}$,
    \[
        |\mathcal{W}_i'|\leq |\Hol(G)|=|G|\cdot |\Aut(G)|\leq r\cdot 2^{(\log_2r)^2}=2^{(\log_2r)^2+\log_2r}.
    \]

    For each $\beta\in \mathcal{W}_1'$, the elements $\alpha(S)$ and $\iota$ commute. We claim that the number of $S\in\mathcal{W}_1$ such that $\alpha(S)=\beta$ is at most $2^{11r/24+|\calI(G)|/2}$. To prove this claim, following the argument in the proof of~\cite[Lemma~5.5]{DSV2016} (replacing their $\varphi$ by our $\alpha$), we only need to show that
    \begin{enumerate}[(i)]
        \item \label{enui} $|\Fix_G(\alpha)|\leq r/2$; and
        \item \label{enuii} if $|\calI(G)|<r/2=|\Fix_G(\alpha)|=|\Fix_G(\iota\alpha)|$, then $|\Fix_G(\iota)|\leq r/4$.
    \end{enumerate}
    Statement~\eqref{enui} is an immediate consequence of Lemma~\ref{lmfix}. For~\eqref{enuii}, suppose $|\calI(G)|<r/2=|\Fix_G(\alpha)|=|\Fix_G(\iota\alpha)|$ as in the condition of~\eqref{enuii}. Write $\alpha=R(g)\tau$ for some $g\in G$ and $\tau\in\Aut(G)$. Then $\iota\alpha=R(g^{-1})(\iota\tau)$, and it follows from Lemma~\ref{lmfix} that
    \[
        |\Fix_G(\tau)|=|\Fix_G(\iota\tau)|=\frac{r}{2}.
    \]
    Write $G=H\times K$ such that $|H|$ is a $2$-power and $|K|$ is odd. Then $K\leq\Fix_G(\tau)$, for otherwise $2=|G/\Fix_G(\tau)|\geq |K/\Fix_K(\tau)|\geq 3$, a contradiction. Similarly, $K\leq\Fix_G(\iota\tau)$. Now every element of $K$ is fixed by both $\tau$ and $\iota\tau$. This implies that $K=1$, or equivalently, $G$ is a $2$-group. Since $\calI(G)$ is a subgroup of $G$ of order less than $r/2$, it follows that $|\Fix_G(\iota)|=|\calI(G)|\leq r/4$, as statement~\eqref{enuii} asserts. Hence, the claim is true, and so
    \[
        |\mathcal{W}_1|\leq |\mathcal{W}_1'|\cdot 2^{\frac{11}{24}r+\frac{|\calI(G)|}{2}}\leq 2^{(\log_2r)^2+\log_2r}\cdot2^{\frac{11}{24}r+\frac{|\calI(G)|}{2}}.
    \]

    Next, take an arbitrary $\beta\in \mathcal{W}_2'$, and let $z=[\beta,\iota]$. Then each $S\in\mathcal{W}_2$ with $\alpha(S)=\beta$ is a union of $\la\beta,\iota\ra$-orbits. In particular, such $S$ is a union of $\la z,\iota\ra$-orbits. Since $z$ is an involution in $R(G)$, we conclude from Lemma~\ref{lmOrder2} that the number of $S\in\mathcal{W}_2$ with $\alpha(S)=\beta$ is at most $2^{r/4+|\calI(G)|/2}$. Hence,
    \[
        |\mathcal{W}_2|\leq |\mathcal{W}_2'|\cdot 2^{\frac{r}{4}+\frac{|\calI(G)|}{2}}\leq 2^{(\log_2r)^2+\log_2r}\cdot2^{\frac{r}{4}+\frac{|\calI(G)|}{2}}.
    \]

    Finally, take an arbitrary $\beta\in \mathcal{W}_3'$, and let $z=[\beta,\iota]$. Since $z\in R(G)$, each orbit of $\la z\ra$ has length at least $3$, and so does each orbits of $\la\beta,\iota\ra$. Since each $S\in\mathcal{W}_3$ with $\alpha(S)=\beta$ is a union of $\la\beta,\iota\ra$-orbits, it follows that the number of $S\in\mathcal{W}_3$ with $\alpha(S)=\beta$ is at most $2^{r/3}$. Thereby,
    \[
        |\mathcal{W}_3|\leq |\mathcal{W}_3'|\cdot 2^{\frac{r}{3}}\leq 2^{(\log_2r)^2+\log_2r}\cdot2^{\frac{r}{3}}.
    \]

    To sum up, we conclude that
    \begin{align*}
        \frac{|\calS_3'|}{|\calS|}
        =\frac{|\mathcal{W}_1|+|\mathcal{W}_2|+|\mathcal{W}_3|}{|\calS|}
        &\leq \frac{2^{(\log_2r)^2+\log_2r}\cdot(2^{\frac{11}{24}r+\frac{|\calI(G)|}{2}}+2^{\frac{r}{4}+\frac{|\calI(G)|}{2}}+2^{\frac{r}{3}})}{2^{\frac{r}{2}+\frac{|\calI(G)|}{2}}}\\
        &\leq \frac{2^{\frac{11}{24}r+\frac{|\calI(G)|}{2}+(\log_2r)^2+\log_2r+2}}{2^{\frac{r}{2}+\frac{|\calI(G)|}{2}}}
        =2^{-\frac{r}{24}+(\log_2r)^2+\log_2r+2}.
    \end{align*}
    This completes the proof.
\end{proof}

Combining Lemmas~\ref{lm35} and~\ref{lmHol}, we obtain the
following main result of this subsection.

\begin{proposition}\label{propS3}
    The proportion $|\calS_3|/|\calS|$ is at most $2^{-r/24+(\log_2r)^2+\log_2r+2}$.
\end{proposition}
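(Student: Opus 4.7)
The statement of Proposition~\ref{propS3} is a direct packaging of the two lemmas immediately preceding it, so my plan is simply to chain the containment and the bound together rather than redo any counting.

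First I would invoke Lemma~\ref{lm35}, which gives the set-theoretic inclusion $\calS_3 \subseteq \calS_3'$. Dividing by $|\calS|$ yields
\[
    \frac{|\calS_3|}{|\calS|} \;\leq\; \frac{|\calS_3'|}{|\calS|}.
\]
Then I would apply Lemma~\ref{lmHol}, which bounds the right-hand side by $2^{-r/24+(\log_2r)^2+\log_2r+2}$, giving the stated inequality in one line.

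Since the real work (the fix-point analysis for $\Hol(G)$ elements and the orbit counting for involutions versus higher-order elements) has already been done in the proof of Lemma~\ref{lmHol}, and the combinatorial reduction from normalizer conditions to the existence of a nontrivial stabilizing $\alpha \in \Hol(G)-\{1,\iota\}$ has been done in Lemma~\ref{lm35}, there is no remaining obstacle here. The only thing to verify is that the two lemmas are being composed in the correct direction, namely that a containment $\calS_3 \subseteq \calS_3'$ combined with an upper bound on $|\calS_3'|/|\calS|$ indeed yields the same upper bound on $|\calS_3|/|\calS|$, which is immediate from monotonicity of cardinality.
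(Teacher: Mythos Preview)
Your proposal is correct and matches the paper's own argument exactly: the paper simply states that the proposition follows by combining Lemma~\ref{lm35} (the inclusion $\calS_3\subseteq\calS_3'$) with Lemma~\ref{lmHol} (the bound on $|\calS_3'|/|\calS|$). There is nothing more to add.
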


\subsection{Third reduction}\label{subsec33}

By virtue of Propositions~\ref{propS1} and~\ref{propS3}, it suffices to bound $|(\calS_1-\calS_2)-\calS_3|$ to prove Theorem~\ref{thmNumber}. In this subsection, we finally reduce the proof to bounding the sizes of $\calS_4$ and $\calS_5$ defined as follows. Let
\begin{align*}
    \calS_4 =\{S\in \calS_1 \mid &\ \text{there exists}\ X\leq B(S) \ \text{with}\ R(G) \ \text{maximal in}\ X \ \text{and}\ \Nor_X(R(G))=R(G) \},\\
    \calS_5 = \{S\in \calS_1 \mid &\
        \text{there exists } X\leq B(S) \ \text{such that } R(G)\rtimes\langle \iota\rangle = \Nor_X(R(G))<X \text{ and } \Nor_X(R(G)) \\
    &\ \text{is the unique group with the property that } R(G)<\Nor_X(R(G))<X\}.
\end{align*}

The reduction is based on the following result.

\begin{proposition}\label{prop45}
    $(\calS_1-\calS_2)-\calS_3 \subseteq \calS_4\cup \calS_5$.
\end{proposition}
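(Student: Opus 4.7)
Proof proposal for Proposition~\ref{prop45}.

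The plan is to take an arbitrary $S\in(\calS_1-\calS_2)-\calS_3$, carefully unpack what this means, and then produce the required witness subgroup $X\leq B(S)$ by a minimality argument. First, the negations: since $R(G)\rtimes\langle\iota\rangle\leq \Nor_{B(S)}(R(G))$ always, the condition $S\notin\calS_3$ combined with $S\in\calS_1$ forces $\Nor_{B(S)}(R(G))=R(G)\rtimes\langle\iota\rangle$; together with $S\notin\calS_2$, this yields the strict inclusion $R(G)\rtimes\langle\iota\rangle<B(S)$. In particular, picking any $x\in B(S)-(R(G)\rtimes\langle\iota\rangle)$, we have $R(G)^x\neq R(G)$, so $\langle R(G),x\rangle$ is a subgroup of $B(S)$ properly containing $R(G)$ in which $R(G)$ is not normal.

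Next, I would choose $X\leq B(S)$ to be \emph{minimal} subject to $R(G)<X$ and $R(G)\not\trianglelefteq X$; such $X$ exists by the previous paragraph. I would then split into two cases according to whether $R(G)$ is maximal in $X$. If $R(G)$ is maximal in $X$, then $\Nor_X(R(G))$ is a subgroup sandwiched between $R(G)$ and $X$ and hence equals one of them; since $R(G)$ is not normal in $X$, we must have $\Nor_X(R(G))=R(G)$, placing $S$ in $\calS_4$.

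If $R(G)$ is not maximal in $X$, take any subgroup $Y$ with $R(G)<Y<X$. By the minimality of $X$, the proper subgroup $Y$ fails the condition ``$R(G)\not\trianglelefteq\,\cdot\,$'', so $R(G)\trianglelefteq Y$ and hence $Y\leq\Nor_X(R(G))$. Now
\[
\Nor_X(R(G))=X\cap\Nor_{B(S)}(R(G))=X\cap(R(G)\rtimes\langle\iota\rangle),
\]
so $\Nor_X(R(G))\in\{R(G),\,R(G)\rtimes\langle\iota\rangle\}$. The option $\Nor_X(R(G))=R(G)$ is ruled out by $R(G)<Y\leq\Nor_X(R(G))$, so $\Nor_X(R(G))=R(G)\rtimes\langle\iota\rangle$. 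The non-normality of $R(G)$ in $X$ gives $\Nor_X(R(G))<X$, and the containment $Y\leq\Nor_X(R(G))$ (valid for every intermediate $Y$) combined with $R(G)<Y$ forces $Y=\Nor_X(R(G))$; that is, $\Nor_X(R(G))$ is the unique group strictly between $R(G)$ and $X$. Thus $S\in\calS_5$.

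I do not anticipate a serious obstacle here, since once the correct minimality choice is made, both cases unwind directly from the identity $\Nor_X(R(G))=X\cap(R(G)\rtimes\langle\iota\rangle)$ and the lattice structure between $R(G)$ and $X$. The only subtle point worth double-checking is the uniqueness clause for $\calS_5$, which genuinely needs the minimality of $X$ (without it, $X$ could contain many intermediate subgroups). The whole argument is a purely group-theoretic reduction and uses no properties of $S$ beyond $S\in\calS_1$ and the two non-membership conditions.
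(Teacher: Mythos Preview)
Your argument is correct. The main difference from the paper is in how the witness subgroup $X$ is selected. The paper chooses $X\leq B(S)$ so that $R(G)\rtimes\langle\iota\rangle=\Nor_{B(S)}(R(G))$ is a \emph{maximal} subgroup of $X$; this guarantees $\Nor_X(R(G))=R(G)\rtimes\langle\iota\rangle$ and $\Nor_X(R(G))\maxsub X$ from the outset, and then, assuming $S\notin\calS_5$, descends through an auxiliary $Y$ and a further $Z$ with $R(G)\maxsub Z$ to produce the $\calS_4$ witness. You instead take $X$ \emph{minimal} subject to $R(G)<X$ and $R(G)\not\trianglelefteq X$, and split on whether $R(G)$ is maximal in $X$. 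Your key observation $\Nor_X(R(G))=X\cap\bigl(R(G)\rtimes\langle\iota\rangle\bigr)$ makes the $\calS_5$ case especially transparent, since the index-two inclusion $R(G)<R(G)\rtimes\langle\iota\rangle$ immediately pins down every intermediate $Y$. The two approaches are essentially dual (top-down versus bottom-up) and of comparable length; yours avoids the extra descent step $X\to Y\to Z$, while the paper's choice makes the $\calS_5$ membership immediate without needing a separate case split on maximality of $R(G)$.
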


\begin{proof}
For convenience, write $H \maxsub K$ to mean that $H$ is a maximal subgroup of the group $K$. Suppose that $S\in (\calS_1-\calS_2)-\calS_3$. Then $R(G)\rtimes \la\iota\ra =\Nor_{B(S)}(R(G))<B(S)$. Hence, there exists $X\leq B(S)$ such that $\Nor_{B(S)}(R(G))\maxsub X$. It follows from
\[
    R(G)\rtimes \la\iota\ra\leq \Nor_X(R(G))\leq \Nor_{B(S)}(R(G))=R(G)\rtimes \la \iota\ra
\]
that $\Nor_X(R(G))=R(G)\rtimes\la \iota\ra=\Nor_{B(S)}(R(G))$. Thus,
\[
    R(G) \maxsub \Nor_X(R(G)) \ \text{ and }\  \Nor_X(R(G)) \maxsub X.
\]

Assume that $S\notin\calS_5$. Then there exists $Y\leq X$ such that $R(G)<Y<X$ and $Y\neq \Nor_X(R(G))$. Noting that $R(G)\leq \Nor_Y(R(G))\leq \Nor_X(R(G))$, we derive from $R(G) \maxsub \Nor_X(R(G))$ that
\[
    \Nor_Y(R(G))=R(G) \ \text{ or }\ \Nor_Y(R(G))=\Nor_X(R(G)).
\]
If $\Nor_Y(R(G))=\Nor_X(R(G))$, then it follows from $\Nor_X(R(G))\neq Y$ that $\Nor_Y(R(G))< Y<X$, contradicting that $\Nor_X(R(G))\maxsub X$. Therefore, $\Nor_Y(R(G))=R(G)$. Let $Z$ be a subgroup of $Y$ such that $R(G)\maxsub Z$. Then
\[
    R(G)\leq \Nor_Z(R(G))\leq \Nor_Y(R(G))=R(G).
\]
Hence, $\Nor_Z(R(G))=R(G)$, and thus $S\in\calS_4$. This completes the proof.
\end{proof}

\section{Proof of main results}\label{sec4}

In this section, we complete the proof of Theorem~\ref{thmNumber}. We first bound $|\calS_4|$ in Subsections~\ref{subsec41} and~\ref{subsec42}, and then bound $|\calS_5|$ in Subsection~\ref{subsec43}. Finally, the proof of Theorems~\ref{thmNumber} and~\ref{thmIso} together with their corollaries are presented in Subsections~\ref{subsec44} and~\ref{subsec45}.

For each $S\in \calS_4$, fix a subgroup $X(S)$ of $B(S)$ such that $R(G)$ is a maximal subgroup of $X(S)$ and $\Nor_X(R(G))=R(G)$. Fix some $\delta \in (0,1/2)$, and let
\[
    \mathcal{T}_\delta=\{S\in \calS_4\mid |\Core_{X(S)}(R(G))|\geq r^{1-\delta} \} \ \text{ and }\ \mathcal{W}_\delta=\calS_4-\mathcal{T}_\delta.
\]
To bound $|\mathcal{S}_4|$, we bound $|\mathcal{T}_\delta|$ and $|\mathcal{W}_\delta|$ in Subsections~\ref{subsec41} and~\ref{subsec42}, respectively.

We require the following result, obtained by applying~\cite[Theorem~3.2]{DSV2016} to $X^+$ for $X=X(S)$ with $S\in\calS_4$, although the same statement also holds for $X^-$.

\begin{lemma}\label{lmXplus}
    Let $X=X(S)$ for some $S\in\calS_4$, let $N<G$ with $R(N)=\Core_X(R(G))$, and let $H=X_{1^+}$. Then there exist $p\in\pi(r)$ and nontrivial subgroups $Q$ and $L$ of $R(G)^+$ such that the following statements hold.
    \begin{enumerate}[\rm(a)]
        \item \label{enuX1}$X^+=(H^+\times Q)\rtimes L$, where $H^+$ is an elementary abelian $p$-group.
        \item \label{enuX2} $R(G)^+=Q\times L$.
        \item \label{enuX3} $R(N)^+=\Center(X^+)=Q\times\Cen_L(H^+)$.
        \item \label{enuX4} $P:=H^+\times Q$ is the unique Sylow $p$-subgroup of $X^+$.
        \item \label{enuX5} $T:=\Nor_{X^+}(H^+)=H^+\times R(N)^+$.
        \item \label{enuX7} $R(G)^+/R(N)^+$ is a nontrivial cyclic group of order coprime to $p$.
        \item \label{enuX8} $H^+R(N)^+/R(N)^+$ is an irreducible  $(R(G)^+/R(N)^+)$-module.
        \item \label{enuX9} $H^+(H^+)^y=H^+(H^+)^z$ for each $y,z\in X^+- T$.
    \end{enumerate}
\end{lemma}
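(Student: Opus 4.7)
The plan is to derive the lemma as a direct application of \cite[Theorem~3.2]{DSV2016} to the permutation group $X^+\leq\Sym(G^+)$ together with its regular abelian subgroup $R(G)^+$. The only preparatory work will be to transfer the two defining properties of $X=X(S)$---namely, that $R(G)$ is maximal in $X$ and that $\Nor_X(R(G))=R(G)$---to the analogous statements for $R(G)^+$ inside $X^+$.

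First I would observe that since $S\in\calS_4\subseteq\calS_1$, the Cayley graph $\Cay(G,S)$ is twin-free, so the argument of Lemma~\ref{lmA}\eqref{enuAb} shows that the restriction map $x\mapsto x^+$ is injective on the full stabilizer $B(S)$ of $G^+$ and therefore restricts to a group isomorphism $X\to X^+$. Under this isomorphism, $R(G)$ corresponds to $R(G)^+$, the stabilizer $H=X_{1^+}$ corresponds to $H^+=(X^+)_{1^+}$, and $\Core_X(R(G))=R(N)$ corresponds to $\Core_{X^+}(R(G)^+)=R(N)^+$. Consequently, $R(G)^+$ is maximal in $X^+$ and satisfies $\Nor_{X^+}(R(G)^+)=R(G)^+$. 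Moreover, $R(G)^+$ acts regularly on $G^+$ because $R(G)$ is semiregular on $G\times\mathbb{Z}_2$ with the two orbits $G^+$ and $G^-$.

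With all hypotheses verified, applying \cite[Theorem~3.2]{DSV2016} to $(G^+,X^+,R(G)^+)$ will directly produce a prime $p\in\pi(r)$ together with nontrivial subgroups $Q$ and $L$ of $R(G)^+$ satisfying the structural statements \eqref{enuX1}--\eqref{enuX9}. There is essentially no obstacle beyond the book-keeping of the restriction isomorphism: the substantive content of the lemma is already packaged in the cited theorem. The one thing I will double-check while writing the proof is that \cite[Theorem~3.2]{DSV2016} states its conclusion in exactly the form recorded here (in particular, that the subgroup $P$ appears as the unique Sylow $p$-subgroup, that $T$ is identified with $\Nor_{X^+}(H^+)$, and that the irreducibility in \eqref{enuX8} is phrased in terms of $H^+R(N)^+/R(N)^+$), so that the identifications match without further translation.
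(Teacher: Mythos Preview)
Your proposal is correct and follows exactly the paper's approach: the paper states that this lemma is obtained by applying \cite[Theorem~3.2]{DSV2016} to $X^+$ for $X=X(S)$ with $S\in\calS_4$, and your preparatory step of using twin-freeness (via Lemma~\ref{lmA}\eqref{enuAb}) to make the restriction $x\mapsto x^+$ an isomorphism is precisely the book-keeping needed to transfer the hypotheses on $R(G)$ in $X$ to $R(G)^+$ in $X^+$.
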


\subsection{Bounding $|\mathcal{T}_\delta|$}\label{subsec41}
In this subsection, we give a suitable upper bound for $|\mathcal{T}_\delta|$. Let
\begin{equation}\label{eqcalX}
    \mathcal{X}=\{X(S)\mid S\in \mathcal{T}_\delta\}.
\end{equation}
We first bound $|\mathcal{X}|$ in Lemma~\ref{lmXprime}, and then, for each $X\in\mathcal{X}$, we bound the number of $S\in \mathcal{T}_\delta$ such that $X(S)=X$ in Lemma~\ref{lmX}. These two lemmas result in an upper bound of $|\mathcal{T}_\delta|$.

The following lemma gathers several technical consequences of Lemma~\ref{lmXplus} that will be used later in the proof.

\begin{lemma}\label{lmfurther}
    Let $X\in \mathcal{X}$, let $N<G$ with $R(N)=\Core_X(R(G))$, and let $H=X_{1^+}$. Suppose that, for $p\in\pi(r)$ and for $1<Q\leq R(G)^+$ and $1<L\leq R(G)^+$, Lemma~$\ref{lmXplus}$\eqref{enuX1}--\eqref{enuX9} hold. Then the following statements hold.
    \begin{enumerate}[\rm(a)]
        \item \label{enuHCpi} $H\cong C_p^i$ for some positive integer $i\leq r^\delta$.
        \item \label{enuT} $T$ is a normal subgroup of $X^+$ containing $P$ such that $T\cap R(G)^+=R(N)^+$.
        \item \label{enuHHs} For each $y\in X$, the product $HH^y$ is a subgroup of $X$, and so there exists  $M\leq G$ such that
        \begin{equation}\label{eqRM}
            R(M)=(HH^y)\cap R(G).
        \end{equation}
        \item \label{enuM} Let $y\in R(G-N)$, and let $M\leq G$ be as in~\eqref{eqRM}. Then $1<M\leq N$, and $M$ is independent of the choice of $y$ from $R(G-N)$. Moreover, if $H$ is conjugate to $X_{1^-}$ in $X$, then there exists $g\in G$ such that, for each $h\in G-gN$, the $H$-orbit $(h^-)^H$ is also an $R(M)$-orbit.
    \end{enumerate}
\end{lemma}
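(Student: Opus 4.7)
The plan is to deduce parts (a)--(d) in order, using Lemma~\ref{lmXplus} as the main structural tool and identifying $X$ with $X^+$ via Lemma~\ref{lmA}\eqref{enuAb} (applicable because $\Cay(G,S)$ is twin-free for $S\in\calS_1\supseteq\calS_4$); under this identification $H$, $R(N)$, $P$, and $T$ on the two sides correspond, and I shall freely use $H^+\cap R(G)^+=1$ (since $R(G)^+$ is regular and $H^+$ is a point stabilizer). For part (a), $H^+$ is already elementary abelian $p$ by Lemma~\ref{lmXplus}\eqref{enuX1}, so only the rank bound requires work. The triviality $H^+\cap R(N)^+=1$ (inherited from $R(N)^+\leq R(G)^+$) identifies $H^+$ with the irreducible $\mathbb{F}_p$-module $H^+R(N)^+/R(N)^+$ for the cyclic group $R(G)^+/R(N)^+$ supplied by Lemma~\ref{lmXplus}\eqref{enuX8}, and $|R(G)^+/R(N)^+|=r/|N|\leq r^\delta$ by the defining condition of $\mathcal{T}_\delta$. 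Since every irreducible $\mathbb{F}_p$-module for a cyclic group of order $n$ has dimension at most $n$, this gives $i\leq r^\delta$; positivity $i\geq 1$ follows from $R(G)<X$.

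Part (b) will be mostly extracted from Lemma~\ref{lmXplus}\eqref{enuX5}: the decomposition $T=H^+\times R(N)^+$ together with $H^+\cap R(G)^+=1$ and the modular law yields $T\cap R(G)^+=R(N)^+$, while $P\leq T$ is immediate because $P$ is abelian and contains $H^+$. The only serious content is the normality $T\trianglelefteq X^+$, and this will be the main obstacle: for arbitrary $x\in X^+$, the uniqueness of the Sylow $p$-subgroup (Lemma~\ref{lmXplus}\eqref{enuX4}) forces $(H^+)^x\leq P\leq T$, and combined with the centrality of $R(N)^+$ and with $(H^+)^x\cap R(N)^+\leq(H^+)^x\cap R(G)^+=1$, an order comparison yields $T^x=(H^+)^x R(N)^+=T$.

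For part (c), either $y\in T$ (so $H^y=H$ and $HH^y=H$ is trivially a subgroup) or $y\notin T$ (so Lemma~\ref{lmXplus}\eqref{enuX9} makes $H^+(H^+)^y$ a subgroup); in both cases $HH^y\leq X$, and its intersection with the regular subgroup $R(G)$ corresponds to a unique subgroup $M\leq G$. For part (d), the element $y\in R(G-N)$ lies outside $T$ by (b) since $T\cap R(G)=R(N)$, so $(HH^y)^+\supsetneq H^+$; as a subgroup generated by $p$-groups, $HH^y$ is a $p$-group and hence sits in $P=H^+\times Q$, and the direct-factor structure of $P$ yields $(HH^y)^+=H^+\times R(M)^+$, giving $M\leq N$. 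The non-triviality $M\neq 1$ comes from $(H^+)^y\neq H^+$, and independence of $M$ from $y$ is Lemma~\ref{lmXplus}\eqref{enuX9}. For the final clause, assume $X_{1^-}=H^x$ for some $x\in X$; an order count gives $X=TR(G)$, so we may write $x=tR(g^{-1})$ with $t\in T$, $g\in G$. Then $X_{h^-}=H^{xR(h)}$ and $H_{h^-}=H\cap H^{xR(h)}$, and $xR(h)\in T$ precisely when $h\in gN$. Hence $H_{h^-}=H$ for $h\in gN$, while for $h\in G-gN$ the constancy $HH^{xR(h)}=H\times R(M)$ (from Lemma~\ref{lmXplus}\eqref{enuX9}) gives $|H_{h^-}|=|H|/|M|$, so $|(h^-)^H|=|M|$. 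Since $R(M)\leq R(N)=\Center(X^+)$ is centralized by $H$, the orbit $(h^-)^H$ is a union of $R(M)$-orbits; the size forces a single such orbit, which then equals $(hM)^-$ because it contains $h^-$.

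The principal obstacle is the normality of $T$ in part (b), which is invisible from its definition as a normalizer and only emerges after invoking the uniqueness of the Sylow $p$-subgroup; beyond that, the only remaining delicate point is the orbit-size bookkeeping and single-block identification in the final clause of (d).
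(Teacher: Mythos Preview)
Your outline is largely correct and parallels the paper's proof, with two places where the stated justification is incomplete.

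For (b), the paper proves $T\trianglelefteq X^+$ more directly by observing from Lemma~\ref{lmXplus}\eqref{enuX3}--\eqref{enuX5} that $T=P\cdot\Center(X^+)$, a product of two normal subgroups; your conjugation argument via $(H^+)^x\leq P\leq T$ also works.

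For (c), the claim that Lemma~\ref{lmXplus}\eqref{enuX9} ``makes $H^+(H^+)^y$ a subgroup'' for $y\notin T$ is not what that lemma says: it only gives constancy of the product \emph{set} over all $y\notin T$, not that this set is a subgroup. The paper's argument is uniform in $y$: both $H^+$ and $(H^+)^{y^+}$ are $p$-groups, hence lie in the unique (abelian) Sylow $p$-subgroup $P$, so their product is automatically a subgroup. Your case split is unnecessary once you use this.

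For the final clause of (d), your orbit-counting route differs from the paper's direct computation $(h^-)^H=(h^-)^{X_{h^-}H}=(h^-)^{(HH^z)\cap R(G)}=(h^-)^{R(M)}$, and it can be made to work, but the step ``$R(M)$ is centralized by $H$, so $(h^-)^H$ is a union of $R(M)$-orbits'' does not follow from centralization alone: commuting subgroups permute each other's orbits but need not fix them setwise. What you actually need---and have already set up---is the containment $R(M)\leq HH^{xR(h)}=HX_{h^-}$, which gives $(h^-)^{R(M)}\subseteq (h^-)^{X_{h^-}H}=(h^-)^H$; combined with your size count $|(h^-)^H|=|M|=|(h^-)^{R(M)}|$, equality follows immediately.
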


\begin{proof}
    Note from Lemma~\ref{lmA}\eqref{enuAb} that $X^+\cong X\cong X^-$, and then from Lemma~\ref{lmXplus}\eqref{enuX1} that $H\cong H^+$ is an elementary abelian $p$-group.

    \eqref{enuHCpi}
    Let $n=|N|$, and let $H\cong C_p^i$ for some positive integer $i$, and are left to show that $i\leq r^\delta$. Noting that $H^+\cong H^+R(N)^+/R(N)^+$ as $H^+\cap R(N)^+=1$, we derive from Lemma~\ref{lmXplus}\eqref{enuX7}\eqref{enuX8} that $H^+$ is an irreducible module of $R(G)^+/N^+\cong C_{r/n}$, where $r/n$ is coprime to $p$, and hence
    \[
        i=\min\{j\in\mathbb{Z}_{>0}\mid r/n \ \text{divides}\ p^j-1\}.
    \]
    Recall from~\eqref{eqcalX} that $n\geq r^{1-\delta}$. Hence, $i=|(\mathbb{Z}_{r/n})^\times|< r/n\leq r^\delta$, completing the proof of part~\eqref{enuHCpi}.

    \eqref{enuT} Lemma~\ref{lmXplus}\eqref{enuX3}--\eqref{enuX5} imply that $T=P\Center(X^+)$, which is a product of normal subgroups of $X^+$. Hence, $T$ is normal in $X^+$. Since $R(G)^+$ is regular, we derive from Lemma~\ref{lmXplus}\eqref{enuX5} that
    \[
        T\cap R(G)^+=R(N)^+,
    \]
    which completes the proof of part~\eqref{enuT}.

    \eqref{enuHHs} Since both $H^+$ and $(H^+)^{y^+}$ are $p$-groups, they are contained in $P$. Noting from Lemma~\ref{lmXplus}\eqref{enuX4} that $P$ is abelian, we conclude that $H^+(H^+)^{y+}$ is a subgroup. Therefore, $HH^y$ is a subgroup of $X$, and so part~\eqref{enuHHs} holds.

    \eqref{enuM}
    By part~\eqref{enuT}, $ T\cap R(G)^+=R(N)^+$. This together with $y\in R(G)-R(N)$ forces that $y^+\notin T$, and so by Lemma~\ref{lmXplus}\eqref{enuX9}, the group $M$ does not depend on the choice of $y$. Since $y^+\notin T=\Nor_{X^+}(H^+)$ by Lemma~\ref{lmXplus}\eqref{enuX5}, we have
    \begin{equation}\label{eqHgt}
        H^+ (H^+)^{y^+}>H^+.
    \end{equation}
    Combining this with~\eqref{eqRM} and $X^+=H^+R(G)^+$, we derive that
    \[
        R(M)^+=(H^+(H^+)^{y^+})\cap R(G)^+>1,
    \]
    and so $M>1$. Since $H^+$ and $(H^+)^{y^+}$ are both $p$-groups,
    \[
        H^+(H^+)^{y^+}\leq P\leq  T
    \]
    by part~\eqref{enuT}. This in conjunction with part~\eqref{enuT} leads to
    \[
        R(M)^+=(H^+(H^+)^{y^+})\cap R(G)^+\leq T\cap R(G)^+=R(N)^+.
    \]
    It follows that $M\leq N$, which completes the proof of $1<M\leq N$.

    Finally, suppose that $H=(X_{1^-})^x$ for some $x\in X$, and let $g^-=(1^-)^x$ for some $g\in G$. Take an arbitrary $h\in G-gN$. It remains to prove that $(h^-)^H$ is an $R(M)$-orbit. Write $h^-=(g^-)^{z}$ for some $z\in R(G)$. Then
    \begin{equation}\label{eqXh}
        H^{z}=(X_{1^-})^{xz}=X_{(1^-)^{xz}}=X_{(g^-)^{z}}=X_{h^-}.
    \end{equation}
    Since $X=X_{h^-}R(G)$, we have $H^{z}H=X_{h^-}H=\bigcup_j X_{h^-}R(g_j)$ for some $g_j\in G$, and thus
    \begin{equation}\label{eqHcapG}
        (h^-)^{H^{z}H}=(h^-)^{(H^{z}H)\cap R(G)}.
    \end{equation}
    If $z^+\in T$, then $z^+\in T\cap R(G)^+=R(N)^+$, and so $z\in R(N)$. In this case, $h^-=(g^-)^{z}\in (gN)^-$, contradicting that $h\in G-gN$. Therefore, $z^+\notin T$. Then Lemma~\ref{lmXplus}\eqref{enuX9} gives that $H^+(H^+)^{y^+}=H^+(H^+)^{z^+}$, and hence $HH^y=HH^{z}$. As a result,
    \begin{equation}\label{eqRMRG}
        R(M)=HH^y\cap R(G)=HH^{z}\cap R(G).
    \end{equation}
    Moreover, it follows part~\eqref{enuHHs} that $H^{z}H=HH^{z}$. Combining this with~\eqref{eqXh}--\eqref{eqRMRG}, we conclude that
    \[
        (h^-)^H=(h^-)^{H^{z}H}=(h^-)^{(H^{z}H)\cap R(G)}=(h^-)^{(HH^{z})\cap R(G)}=(h^-)^{R(M)},
    \]
    as desired.
\end{proof}

\begin{lemma}\label{lmXprime}
    $|\mathcal{X}|\leq 2^{(r^{2\delta}+r^\delta+4)(\log_2r)^2+(2+\delta)\log_2r}$.
\end{lemma}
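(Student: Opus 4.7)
The plan is to enumerate $\mathcal{X}$ by parametrizing each $X$ via the structural invariants supplied by Lemmas~\ref{lmXplus} and~\ref{lmfurther}. For each $X \in \mathcal{X}$, I will first extract a prime $p \in \pi(r)$ and a subgroup $N \leq G$ with $R(N) = \Core_X(R(G))$; since $X \in \mathcal{T}_\delta$ gives $|N| \geq r^{1-\delta}$ and Lemma~\ref{lmfurther}\eqref{enuHCpi} gives that the point stabilizer $H \cong C_p^i$ has rank $i \leq r^\delta$, the pair $(N,p)$ ranges over at most $2^{(\log_2 r)^2}\log_2 r$ possibilities. Once $(N,p)$ is fixed, abelianness of $G$ forces $Q$ and $L$ to be the unique Sylow $p$- and Hall $p'$-subgroups of $R(G)^+$, $i=\mathrm{ord}_{|G/N|}(p)$ is determined, and the $L$-action on $H^+$ is the unique faithful irreducible representation of $L/\Cen_L(H^+)\cong G/N$ on $\mathbb{F}_p^i$, so the isomorphism type of $X^+$ is pinned down.

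Next, for fixed $(N,p)$ I will count the subgroups $X^+\leq\Sym(G^+)$ of the prescribed iso type containing $R(G)^+$ via Lemma~\ref{lmTriple}\eqref{eunTriplec} and~\eqref{eunTripled}: the triples $(G^+, X^+, R(G)^+)$ partition into at most $\ell m$ equivalence classes, each contributing at most $|\Hol(G)|\leq 2^{(\log_2 r)^2+\log_2 r}$ subgroups. Here $m=1$: any abelian regular subgroup of $X^+ = P\rtimes L$ isomorphic to $R(G)$ must, by centralizer considerations using the irreducibility of the $L$-action from Lemma~\ref{lmXplus}\eqref{enuX8}, decompose as $Q\times L^g$ for some $g\in X^+$, and these are all conjugate in $X^+$. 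The count $\ell$ bounds conjugacy classes of rank-$i$ elementary abelian subgroups of $X^+$; these lie inside $\Omega_1(P)\cong\mathbb{F}_p^{i+\mathrm{rank}(Q)}$, so $\ell\leq p^{i\cdot\mathrm{rank}(Q)}\leq 2^{r^\delta(\log_2 r)^2}$.

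Finally, since $X\mapsto X^+$ is not injective, I will bound its fibers. Each $X$ with a given $X^+$ is the graph of an isomorphism $X^+\to X^-$ restricting to the canonical $R(G)^+\to R(G)^-$, equivalently a subgroup $K\leq X^+$ with $|K|=p^i$, $K\cap R(G)^+=1$, and $K R(G)^+=X^+$; such $K$'s are the complements of $Q$ in the abelian Sylow $p$-subgroup $P = H^+ \times Q$, and the fiber count is at most $|\mathrm{Hom}(H^+,Q)|\leq p^{i\cdot\mathrm{rank}(Q)}\leq 2^{r^\delta(\log_2 r)^2}$. Multiplying all four contributions produces a bound of the form $2^{(2r^\delta+2)(\log_2 r)^2+\log_2 r+O(\log\log r)}$, which is dominated by the target $2^{(r^{2\delta}+r^\delta+4)(\log_2 r)^2+(2+\delta)\log_2 r}$ for large $r$ since $r^{2\delta}>2r^\delta$. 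The hard part will be the exponent bookkeeping: the sharp bound $m=1$ is essential (else the crude bound on $\ell m$ contributes an extra factor of roughly $2^{r^{2\delta}(\log_2 r)^2}$), and handling the fiber count cleanly requires identifying complements of $Q$ in $P$ with $\mathrm{Hom}(H^+,Q)$.
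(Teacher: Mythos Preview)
Your approach is essentially correct and takes a genuinely different---and sharper---route than the paper. The paper parametrizes crudely by the quintuple $(p,i,Q,L,N)$, then bounds the number of equivalence classes of triples $(G^+,X^+,R(G)^+)$ by the number of homomorphisms $L\to\Aut(H^+\times Q)$ acting trivially on $Q$; this homomorphism count is where the dominant $r^{2\delta}(\log_2 r)^2$ term in the exponent arises. You instead observe that $(N,p)$ already determines $Q$, $L$ and $i$, argue that the isomorphism type of $X^+$ is forced, and then use Lemma~\ref{lmTriple}\eqref{eunTriplec} with $m=1$. Your proof that $m=1$ (any abelian subgroup of $X^+$ isomorphic to $G$ has its Hall $p'$-part conjugate to $L$ and then its Sylow $p$-part forced into $\Cen_P(L)=Q$ by irreducibility) is a clean argument the paper does not use. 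Your resulting bound has leading term only of order $r^\delta(\log_2 r)^2$, comfortably inside the stated target. You also explicitly treat the fiber of $X\mapsto X^+$, which the paper's proof appears to elide when writing $|\mathcal{X}(p,i,Q,L,N)|=|\Omega|$.

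Two points deserve care in the write-up. First, the sentence ``the unique faithful irreducible representation of $L/\Cen_L(H^+)\cong G/N$ on $\mathbb{F}_p^i$'' is not literally true: a cyclic group $C_m$ with $i=\mathrm{ord}_m(p)$ has $\phi(m)/i$ pairwise non-isomorphic faithful irreducible $\mathbb{F}_p$-modules. Your conclusion that the isomorphism type of $X^+$ is pinned down is nonetheless correct, because any two such representations differ by precomposition with an automorphism of $C_m$, which yields isomorphic semidirect products; you should say this rather than claim uniqueness of the module. Second, in your fiber argument the word ``equivalently'' is too strong: not every complement $K$ of $Q$ in $P$ need arise from some $X\in\mathcal{X}$. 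What you actually need (and what your description supports) is that $X$ is \emph{determined} by the pair $(X^+,K)$ with $K=(X_{1^-})^+$, since the action of each $x\in X$ on $G^-$ can be read off from the unique factorization $x^+=kR(g)^+$; this gives the upper bound $|\Hom(H^+,Q)|$ on the fiber, which is all you use. Finally, your closing ``for large $r$'' should be replaced by a short check that your product of bounds is at most the stated target for all $r\geq 3$, or by an observation that for small $r$ the inequality is trivial since $|\mathcal{X}|\leq|\mathcal{T}_\delta|\leq|\calS|=2^{\bfc(G)}$.
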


\begin{proof}
    For $p\in \pi(r)$, nonnegative integer $i\leq r^\delta$, nontrivial subgroups $Q$ and $L$ of $R(G)^+$, and $N<G$, let
    \begin{align*}
        \mathcal{X}(p,i,Q,L,N)=\{X\in \mathcal{X}\mid &~ X_{1^+}\cong C_p^i,\ R(N)=\Core_X(R(G)),\\
        &~(p,i,Q,L,N,X)\ \text{satisfies}\ \text{Lemma~\ref{lmXplus}\eqref{enuX1}--\eqref{enuX9}}\}.
    \end{align*}
    We see from Lemmas~\ref{lmXplus} and~\ref{lmfurther}\eqref{enuHCpi} that
    \[
        \mathcal{X}=\bigcup_{p\in \pi(r),\ 1\leq i\leq r^\delta,\ 1<Q\leq R(G)^+,\ 1<L\leq R(G)^+,\ N<G}\mathcal{X}(p,i,Q,L,N).
    \]
    The number of choices for $p$ is at most $r$, the number of choices for $i$ is at most $r^\delta$, and the number of choices for $(Q,L,N)$ is at most
    \[
        2^{(\log_2r)^2}\cdot 2^{(\log_2r)^2}\cdot 2^{(\log_2r)^2}=2^{3(\log_2r)^2}.
    \]
    Therefore, it suffices to show that, fixing $p\in\pi(r)$, positive integer $i\leq r^\delta$, nontrivial subgroups $Q$ and $L$ of $R(G)^+$, and proper normal subgroup $N$ of $G$, we have
    \[
        |\mathcal{X}(p,i,Q,L,N)|\leq 2^{(r^{2\delta}+r^\delta+1)(\log_2r)^2+\log_2r}.
    \]
    Take an arbitrary $X\in \mathcal{X}(p,i,Q,L,N)$, and denote $H=X_{1^+}$. Then $R(N)=\Core_X(R(G))$, $H^+\cong C_p^i$, and Lemma~\ref{lmXplus}\eqref{enuX1}--\eqref{enuX9} hold. Moreover, $X^+=(H^+\times Q)\rtimes_\theta L$ for some homomorphism $\theta\colon L\to \Aut(H^+\times Q)$ such that $L^\theta$ acts trivially on $Q$.

    We now bound the number $m$ of triples $(G^+,\tilde{X}^+,R(G)^+)$ in
    \[
        \Omega:=\{(G^+,\tilde{X}^+,R(G)^+)\mid \tilde{X}\in \mathcal{X}(p,i,Q,L,N)\}
    \]
    up to equivalence (in the sense of Definition~\ref{defEquiv}). Let $\tilde{X}\in \mathcal{X}(p,i,Q,L,N)$ and let $\tilde{H}=\tilde{X}_{1^+}$. Then $\tilde{H}\cong C_p^i\cong H$ with $i\leq r^\delta$, and
    \[
        X^+=(H^+\times Q)\rtimes_{\theta} L \ \text{ and }\ \tilde{X}^+=(\tilde{H}^+\times Q)\rtimes_{\tilde{\theta}} L
    \]
    for some homomorphisms $\theta\colon L\to \Aut(H\times Q)$ and $\tilde{\theta}\colon L\to \Aut(\tilde{H}\times Q)$ such that both $L^\theta$ and $L^{\tilde{\theta}}$ act trivially on $Q$. Let $\varphi\colon H\times Q \to \tilde{H}\times Q$ be an isomorphism such that
    \[
        H^\varphi=\tilde{H} \ \text{ and }\ q^\varphi=q \ \text{ for each}\ q\in Q.
    \]
    It is direct to verify that $\varphi$ induces an isomorphism
    \[
        \psi\colon\, (H\times Q)\rtimes_\theta L\to (\tilde{H}\times Q)\rtimes_\sigma L,\ \ x\ell\mapsto x^\varphi\ell \ \text{ for}\ x\in H\times Q \ \text{and}\  \ell\in L,
    \]
    where $\sigma$ is a homomorphism from $L$ to $\Aut(\tilde{H}\times Q)$ defined by
    \[
       \ell^\sigma=\varphi^{-1}\ell^\theta \varphi \ \text{  for}\ \ell\in L.
    \]
    In particular,
    \[
        H^\psi=\tilde{H} \ \text{ and }\ (R(G)^+)^\psi=(Q\times L)^\psi=Q\times L= R(G)^+.
    \]
    Moreover, if $\sigma=\tilde{\theta}$, then $(X^+)^\psi=\tilde{X}^+$. Note by Lemma~\ref{lmTriple}\eqref{eunTriplea} that $(G^+,X^+,R(G))$ is equivalent to the right triple $(X^+/H,X^+,R(G)^+)$, and that $(G^+,\tilde{X}^+,R(G)^+)$ is equivalent to the right triple $(\tilde{X}^+/\tilde{H},\tilde{X}^+,R(G)^+)$. Furthermore, Lemma~\ref{lmTriple}\eqref{eunTripleb} asserts that the right triple $(X^+/H,X^+,R(G)^+)$ is equivalent to the right triple
    \[
        ((X^+)^\psi/H^\psi, (X^+)^\psi, (R(G)^+)^\psi)=((X^+)^\psi/\tilde{H}, (X^+)^\psi, R(G)^+).
    \]
    Hence, if $\sigma=\tilde{\theta}$, then $(G^+,X^+,R(G))$ is equivalent to $(G^+,\tilde{X}^+,R(G))$. Therefore, we infer that $m$ is at most the cardinality of the set
    \[
        \Theta:=\{\rho \mid \rho \ \text{is a homomorphism from}\ L \ \text{to}\ \Aut(\tilde{H}\times Q), \  L^\rho \ \text{acts trivially on}\ Q\}.
    \]
    For fixed generators $\ell_1,\ldots,\ell_k$ of $L$, where $k\leq \log_2|L|\leq \log_2r$, each $\rho\in \Theta$ is determined by $\ell_1^\rho,\ldots,\ell_k^\rho$, and each $\ell_j^\rho$ is determined by its image on an ordered set of $i$ generators of $\tilde{H}$. Hence,
    \[
        |\Theta|\leq |\tilde{H}\times Q|^{i\log_2r}=(p^i|Q|)^{i\log_2r}\leq (r^{r^\delta}r)^{r^\delta\log_2r}=2^{r^\delta(r^\delta+1)(\log_2r)^2}.
    \]
    Therefore, $m\leq 2^{r^\delta(r^\delta+1)(\log_2r)^2}$.

    By Lemma~\ref{lmTriple}\eqref{eunTripled}, the intersection of each equivalence class (in the sense of Definition~\ref{defEquiv}) with $\Omega$ has size at most $|\Hol(R(G)^+)|\leq 2^{(\log_2r)^2+\log_2r}$. Therefore,
    \[
        |\mathcal{X}(p,i,Q,L,N)|=|\Omega|\leq 2^{r^\delta(r^\delta+1)(\log_2r)^2}\cdot 2^{(\log_2r)^2+\log_2r}=2^{(r^{2\delta}+r^\delta+1)(\log_2r)^2+\log_2r},
    \]
    as desired.
\end{proof}

For a subgroup $M$ of $G$, since $G$ is abelian, right cosets of $M$ coincide with left cosets of $M$, which will be simply referred to as $M$-cosets.

\begin{lemma}\label{lmConj}
    Let $g\in G$ and $1<M\leq N<G$. Suppose that $G$ is not a $2$-group. Then the number of $S\in\calS$ such that $S-gN$ is a union of $M$-cosets is at most $2^{11r/24+|\calI(G)|/2}$.
\end{lemma}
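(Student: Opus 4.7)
The plan is to split the count of admissible $S$ according to whether $g^2\in N$ or $g^2\notin N$, since the key structural question is whether $gN$ equals $g^{-1}N$ (and so is $\iota$-stable).

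If $g^2\notin N$, then $g^{-1}N\subseteq G-gN$, so $S\cap g^{-1}N$ is automatically contained in the union of $M$-cosets $S-gN$, hence is itself a union of $M$-cosets; inverting and using that $\iota$ permutes $M$-cosets in the abelian group $G$, I deduce that $S\cap gN=(S\cap g^{-1}N)^{-1}$ is also a union of $M$-cosets, and so $S$ is an inverse-closed union of $M$-cosets in all of $G$. The standard count then gives at most $2^{(r/|M|+|\calI(G/M)|)/2}\leq 2^{r/4+|\calI(G)|/2}$ such $S$, where the inequality $|\calI(G/M)|\leq |\calI(G)|$ comes from analysing the doubling map $G\to G$; this is safely below the target.

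If $g^2\in N$, then $gN$ is $\iota$-stable, and I decompose $S=S_A\sqcup S_B$ with $S_A:=S-gN$ and $S_B:=S\cap gN$, both individually $\iota$-stable. When $|M|\geq 3$, the number of choices for $S_A$ is at most $2^{(r-|N|)/|M|}\leq 2^{(r-|N|)/3}$ (ignoring $\iota$-stability), the number of choices for $S_B$ is at most $2^{(|N|+|\calI(G)|)/2}$, and together with $|N|\leq r/2$ the total exponent is at most $5r/12+|\calI(G)|/2$, which lies within the target.

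The main obstacle is the subcase $g^2\in N$ with $|M|=2$, say $M=\langle z\rangle$, where the crude count of unions of $M$-cosets in $G-gN$ is too weak. Here I would exploit that $S_A$ is simultaneously $R(z)$- and $\iota$-stable, and apply the Burnside-style orbit count underlying Lemma~\ref{lmOrder2}, restricted to $G-gN$, to obtain the exact count $2^{(r-|N|+I_1+J_1)/4}$ for $S_A$, where $I_1=|\calI(G-gN)|$ and $J_1=|\{x\in G-gN:x^2=z\}|$. Multiplying by the $2^{(|N|+I_2)/2}$ choices for $S_B$ (with $I_2=|\calI(gN)|$) and using $I_1+I_2=|\calI(G)|$, the total exponent becomes $(r+|N|+|\calI(G)|+I_2+J_1)/4$, which after the estimates $I_2,J_1\leq |\calI(G)|$ and $|N|\leq r/2$ (noting that $|M|=2$ forces $|G|$ to be even) is at most $3r/8+3|\calI(G)|/4$. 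The final inequality $3r/8+3|\calI(G)|/4\leq 11r/24+|\calI(G)|/2$ is equivalent to $|\calI(G)|\leq r/3$, and this is exactly where the hypothesis that $G$ is not a $2$-group is used: it forces the Sylow $2$-subgroup of $G$ to have index at least $3$, so $|\calI(G)|$ is at most the order of this Sylow subgroup, which is at most $r/3$.
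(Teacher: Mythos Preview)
Your proof is correct and follows essentially the same approach as the paper: the same case split according to whether $gN=g^{-1}N$, the same reduction in each case to counting inverse-closed unions of $M$-cosets (separately on $gN$ and on $G-gN$ when $gN=g^{-1}N$), and the same use of the hypothesis that $G$ is not a $2$-group to bound the relevant quantity by $r/3$ in the critical $|M|=2$ subcase. The only differences are cosmetic: in the first case you bound $|\calI(G/M)|\leq |\calI(G)|$ via the doubling map where the paper uses $g^2\notin M$ to bound the equivalent quantity $\ell$ by $r/2$, and in the $|M|=2$ subcase you phrase the count via the orbit analysis of Lemma~\ref{lmOrder2} restricted to $G-gN$, which yields exactly the paper's expression since $\ell=I_1+J_1$.
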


\begin{proof}
    Let $n=|N|$ and $m=|M|$. Since $G$ is abelian, we see that
    \begin{equation}\label{eqiff}
        h^2\in M \ \text{ if and only if }\ hM=(hM)^{-1}.
    \end{equation}

    Assume first that $gN\neq g^{-1}N$. Then $g^{-1}N\subseteq G-gN$. Let $\ell=|\{h\in G\mid h^2\in M\}|$, and let
    \[
        k=|\{hM\in G/M\mid hM=(hM)^{-1}\}|.
    \]
    Then $k=\ell/m$ by~\eqref{eqiff}. We note from $M\leq N$ that $g^{-1}N$ is a union of $M$-cosets. Take an arbitrary $S\in\calS$ such that $S-gN$ is a union of $M$-cosets. Then
    \[
        S\cap g^{-1}N=(S\cap g^{-1}N)\cap(G-gN)=(S-gN)\cap g^{-1}N
    \]
    is a union of $M$-cosets. This combined with $S=S^{-1}$ implies that $S\cap gN=(S\cap g^{-1}N)^{-1}$ is a union of $M$-cosets, and so $S=(S\cap gN)\cup (S-gN)$ is a union of $M$-cosets. Hence, the number of $S\in\calS$ such that $S-gN$ is a union of $M$-cosets is at most
    \[
        2^{k+\frac{1}{2}\left(\frac{r}{m}-k\right)}
        =2^{\frac{\ell}{m}+\frac{1}{2}\left(\frac{r}{m}-\frac{\ell}{m}\right)}
        =2^{\frac{r+\ell}{2m}}.
    \]
    Thus, it suffices to show that
    \[
        \frac{r+\ell}{2m}\leq \frac{11}{24}r+ \frac{|\calI(G)|}{2}.
    \]
    If $\ell=r$, then $g^2\in M\leq N$, contradicting $gN\neq g^{-1}N$. Therefore, $\ell\leq r/2$, and so
    \[
        \frac{r+\ell}{2m}\leq \frac{r+\frac{r}{2}}{4}=\frac{3}{8}r<\frac{11}{24}r+\frac{|\calI(G)|}{2},
    \]
    as desired.

    Assume next that $gN=g^{-1}N$. Let $\ell=|\{h\in G-gN\mid h^2\in M\}|$. By~\eqref{eqiff}, the number of $S\in\calS$ such that $S-gN$ is a union of $M$-cosets is
    \[
        2^{|\calI(gN)|+\frac{n-|\calI(gN)|}{2}+\frac{\ell}{m}+\frac{r-n-\ell}{2m}}=2^{\frac{|\calI(gN)|}{2}+\frac{n}{2}(1-\frac{1}{m})+\frac{r+\ell}{2m}}\leq 2^{\frac{|\calI(G)|}{2}+\frac{n}{2}(1-\frac{1}{m})+\frac{r+\ell}{2m}}.
    \]
    Hence, it suffices to show that
    \[
        \frac{n}{2}\Big(1-\frac{1}{m}\Big)+\frac{r+\ell}{2m}\leq \frac{11}{24}r.
    \]
    If $m=2$, then elements of $G$ whose square lies in $M$ is contained in the a Sylow $2$-subgroup of $G$, which implies $\ell\leq r/3$ as $G$ is not a $2$-group. In this case, we derive from $n\leq r/2$ that
    \[
        \frac{n}{2}\Big(1-\frac{1}{m}\Big)+\frac{r+\ell}{2m}\leq \frac{r}{4}\Big(1-\frac{1}{2}\Big)+\frac{r+\frac{r}{3}}{4}=\frac{11}{24}r,
    \]
    as desired. If $m\geq 3$, then since $\ell\leq r-n$ and $n\leq r/2$, we deduce that
    \begin{align*}
        \frac{n}{2}\Big(1-\frac{1}{m}\Big)+\frac{r+\ell}{2m}
        &\leq \frac{n}{2}\Big(1-\frac{1}{m}\Big)+\frac{2r-n}{2m}\\
        &=\frac{n}{2}\Big(1-\frac{2}{m}\Big)+\frac{r}{m}
        \leq \frac{r}{4}\Big(1-\frac{2}{m}\Big)+\frac{r}{m}=\frac{r}{4}+\frac{r}{2m}
        \leq \frac{r}{4}+\frac{r}{6}<\frac{11}{24}r,
    \end{align*}
    completing the proof.
\end{proof}

\begin{lemma}\label{lmX}
    For each $X\in \mathcal{X}$, the number of $S\in \mathcal{T}_\delta$ such that $X(S)=X$ is at most $2^{|\calI(G)|/2+11r/24+2(\log_2 r)^2}$.
\end{lemma}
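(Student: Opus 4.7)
The plan is to bound, for a fixed $X\in\mathcal{X}$, the number of $S\in\mathcal{T}_\delta$ with $X(S)=X$ by showing that any such $S$ is highly constrained: it must agree with a union of cosets of a fixed proper subgroup of $G$ outside a single exceptional coset, after which Lemma~\ref{lmConj} does the counting.

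First I would extract the structural data attached to $X$. Let $H=X_{1^+}$, let $N<G$ with $R(N)=\Core_X(R(G))$, and let $M$ be the subgroup with $1<M\leq N$ produced by Lemma~\ref{lmfurther}(c), noting that $M$ depends only on $X$ (it is independent of the choice of $y\in R(G-N)$, and $H$, $N$ are already determined by $X$). For any $S\in\mathcal{T}_\delta$ with $X(S)=X$, since $X\leq\Aut(D(\Cay(G,S)))$ and $H$ fixes $1^+$, the subgroup $H$ setwise stabilizes the neighborhood $D(\Cay(G,S))(1^+)=S^-$. Hence $S^-$ is a union of $H$-orbits on $G^-$, equivalently $S$ is a union of $H^-$-orbits on $G$.

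Next I would invoke Lemma~\ref{lmfurther}(d): under the conjugacy hypothesis $H\sim X_{1^-}$ in $X$, the lemma provides $g\in G$ such that for every $h\in G-gN$ the $H$-orbit $(h^-)^H$ coincides with the $R(M)$-orbit $(h^-)^{R(M)}$. Translating back from $G^-$ to $G$, this forces $S-gN$ to be a union of $M$-cosets. Assuming $G$ is not a $2$-group, Lemma~\ref{lmConj} then bounds the number of inverse-closed subsets $S\subseteq G$ with this property by $2^{11r/24+|\calI(G)|/2}$. Enumerating over the at most $|G/N|\leq r\leq 2^{(\log_2 r)^2}$ choices for the coset $gN$, together with a further factor of $2^{(\log_2 r)^2}$ absorbed for auxiliary subgroup data arising in case splits, yields the claimed bound $2^{|\calI(G)|/2+11r/24+2(\log_2 r)^2}$.

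The main obstacle is the conjugacy hypothesis $H\sim X_{1^-}$ in $X$ required by Lemma~\ref{lmfurther}(d), which is not automatic: the bipartition-swapping involution of $D(\Cay(G,S))$ lies outside $B(S)$ and hence outside $X$. I expect to verify it by exploiting the uniqueness of the Sylow $p$-subgroup $P$ of $X$ (Lemma~\ref{lmXplus}(d)), together with the fact that both $H$ and $X_{1^-}$ are elementary abelian $p$-subgroups of $P$ of the same order complementing the Sylow $p$-part of $R(G)$ in the abelian $p$-group $P$; a Sylow-type argument combined with conjugation by a suitable element of $R(G)$ should then force $H$ and $X_{1^-}$ to be conjugate in $X$. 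Should this route fail, the backup plan is a symmetric argument using $K:=X_{1^-}$ acting on $G^+$ (note that $S^+=D(\Cay(G,S))(1^-)$ is $K$-invariant) together with the analogue of Lemma~\ref{lmfurther}(d) derived from applying Lemma~\ref{lmXplus} to $X^-$, producing an analogous pair $(M',g')$ and the same type of restriction on $S$. A secondary, more routine obstacle is the $2$-group case, in which Lemma~\ref{lmConj} does not directly apply; here I expect to use that $G$ has exponent greater than $2$, so $\calI(G)$ is a proper subgroup of $G$, and to run a refined orbit-counting argument (controlling separately the cases $|M|=2$ and $|M|\geq 3$) to recover the same bound.
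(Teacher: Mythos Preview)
Your approach for the conjugate case is exactly the paper's Case~1: when $H=X_{1^+}$ and $K=X_{1^-}$ are conjugate in $X$, Lemma~\ref{lmfurther}\eqref{enuM} makes $S-gN$ a union of $M$-cosets and Lemma~\ref{lmConj} finishes. Your secondary obstacle is moot: Lemma~\ref{lmXplus}\eqref{enuX7} gives $|G/N|>1$ coprime to $p\in\pi(r)$, so $r$ has an odd prime divisor regardless of whether $p=2$ or $p$ is odd, and $G$ is automatically not a $2$-group.

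The real gap is that you cannot force conjugacy, and neither of your proposed routes closes it. The Sylow $p$-subgroup $P=H^+\times Q$ of $X^+$ is \emph{abelian} (Lemma~\ref{lmXplus}\eqref{enuX4}), so conjugation inside $P$ is trivial; the $X^+$-conjugates of $H^+$ are precisely the point stabilizers on $G^+$, while $K^+$ is merely some complement to $Q$ in $P$ (one has $K^+\leq P$ as a $p$-group, and $K^+\cap Q\leq K^+\cap R(G)^+=1$), and an abelian $p$-group may have complements to $Q$ outside the $L$-orbit of $H^+$. Your backup plan is circular: the analogue of Lemma~\ref{lmfurther}\eqref{enuM} for $X^-$ would describe $K$-orbits on $G^+$, but those orbits are governed by the stabilizers $X_{h^+}=H^{R(h)}$, so one is led to products $H^{R(h)}K$ rather than $KK^y$, and these are not subgroups unless $H$ and $K$ are conjugate---the same hypothesis again.

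The paper's Case~2 handles non-conjugacy by an entirely different count. Via Lemma~\ref{lmBiCos}, each admissible $S$ corresponds to a union of double cosets in $K\backslash X/H$ closed under the pairing $KR(g)H\leftrightarrow KR(g)^{-1}H$, so the number of $S$ is at most $2^{(\kappa+\kappa')/2}$ with $\kappa=|K\backslash X/H|$ and $\kappa'$ the number of self-paired double cosets. Non-conjugacy gives $|KxH|/|H|=|H|/|K^x\cap H|\geq p$ for every $x$, hence $\kappa\leq r/p$, which already suffices when $p\geq3$. For $p=2$ one uses $H^+,K^+\leq T\trianglelefteq X^+$ with $X^+/T$ of odd order (Lemma~\ref{lmfurther}\eqref{enuT}) to show that $KR(g)H=KR(g)^{-1}H$ forces $g\in N$, whence $\kappa'\leq|N|\leq r/3$ and $(\kappa+\kappa')/2\leq 5r/12$.
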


\begin{proof}
    Recall from Lemma~\ref{lmA}\eqref{enuAb} that $X^+\cong X\cong X^-$. Let $H=X_{1^+}$, let $K=X_{1^-}$, and let $N< G$ with $R(N)=\Core_X(R(G))$. Since $X\in\mathcal{X}$, there exist $p\in\pi(r)$ and nontrivial subgroups $Q$ and $L$ of $R(G)^+$ such that Lemma~\ref{lmXplus}\eqref{enuX1}--\eqref{enuX9} hold. In particular, $G$ is not a $2$-group.

    \textsf{Case 1}: $H$ and $K$ are conjugate in $X$. We obtain from Lemma~\ref{lmfurther}\eqref{enuM} that there exist $M\leq N$ and $g\in G$ such that $M>1$ and, for each $h\in G-gN$, the $H$-orbit $(h^-)^H$ is an $R(M)$-orbit. Note from $H=X_{1^+}$ that $H$ fixes $S^-$ setwise. Hence, $(S-gN)^-$ is a union of $R(M)$-orbits, and so $S-gN$ is a union of $M$-cosets. There are at most $2^{2(\log_2r)^2}$ choices for $(M,N)$, and Lemma~\ref{lmConj} shows that for a fixed $(M,N)$, the number of $S\in\mathcal{T}_{\delta}$ such that $S-gN$ is a union of $M$-cosets is at most $2^{11r/24+|\calI(G)|/2}$. Therefore, the conclusion of the lemma follows.

    \textsf{Case 2}: $H$ and $K$ are not conjugate in $X$. Then
    \begin{equation}\label{eqDoubleCoset}
        \frac{|KxH|}{|H|}=\frac{|H|}{|K^x\cap H|}>1 \ \text{ for each}\ x\in X.
    \end{equation}
    By Lemma~\ref{lmBiCos}\eqref{enu22a}--\eqref{enu22c}, distinct $S\in\mathcal{T}_\delta$ with $X(S)=X$ corresponds to distinct $Y$ such that $Y$ is a union of double cosets in $K\backslash X/H$ with
    \[
        D(\Cay(G,S))^\varphi=\BiCos(X,K,H;Y),
    \]
    where $\varphi$ is defined in Lemma~\ref{lmBiCos}. Since $X=R(G)H$, each double coset in $K\backslash X/H$ has the form $KR(g)H$ for some $g\in G$. Let
    \[
        \kappa=|K\backslash X/H| \ \text{ and }\ \kappa'=|\{KR(g)H\mid g\in G,\ KR(g)^{-1}H=KR(g)H\}|.
    \]
    Noting from Lemma~\ref{lmBiCos}\eqref{enu22d} that $KR(g)H\subseteq Y$ if and only if $KR(g)^{-1}H\subseteq Y$, the number of $S\in \mathcal{T}_\delta$ such that $X(S)=X$ is at most $2^{\kappa'+(\kappa-\kappa')/2}=2^{(\kappa+\kappa')/2}$. Thus, we are left to show that
    \[
        \frac{\kappa+\kappa'}{2}\leq \frac{|\calI(G)|}{2}+\frac{11}{24}r+2(\log_2 r)^2.
    \]

    \textsf{Subcase 2.1}: $p\geq 3$. Since Lemma~\ref{lmXplus}\eqref{enuX1}\eqref{enuX2} together with $X^+\cong X$ show that $|H|=|X|/r$ is a power of $p$, we observe from~\eqref{eqDoubleCoset} that $|KxH|/|H|\geq 3$ for each $x\in X$, and so $3\kappa\leq |X|/|H|=r$. Therefore,
    \[
        \frac{\kappa+\kappa'}{2}\leq \kappa\leq \frac{r}{3}< \frac{|\calI(G)|}{2}+\frac{11}{24}r+2(\log_2 r)^2,
    \]
    as desired.

    \textsf{Subcase 2.2}: $p=2$. Take an arbitrary $g\in G$ such that $KR(g)^{-1}H=KR(g)H$. We claim that $g\in N$. Write
    \begin{equation}\label{eqRgkh}
        R(g)^{-1}=kR(g)h
    \end{equation}
    for some $k\in K$ and $h\in H$. Let $T=\Nor_{X^+}(H^+)$ be as in Lemma~\ref{lmXplus}\eqref{enuX5}. Then $T$ is normal in $X^+$ as Lemma~\ref{lmfurther}\eqref{enuT} asserts. Since $|H|=|X|/|R(G)|=|K|$, it follows from Lemma~\ref{lmXplus}\eqref{enuX1} that both $H$ and $K$ are $p$-subgroups of $X$. Then we conclude from Lemma~\ref{lmfurther}\eqref{enuT} that
    \begin{equation}\label{eqHKT}
        H^+\leq T \ \text{ and }\ K^+\leq T.
    \end{equation}
    For each $x\in X$, denote by $\overline{x}$ the image of the natural homomorphism $X^+\to X^+/T$. Then we deduce from~\eqref{eqRgkh} and~\eqref{eqHKT} that
    \[
        \overline{R(g)^+}^{-1}=\overline{k^+}\overline{R(g)^+}\overline{h^+}=\overline{R(g)^+}.
    \]
    Since Lemma~\ref{lmfurther}\eqref{enuT} implies that $X^+/T$ is a $2'$-group, we obtain $\overline{R(g)^+}=\overline{1^+}$. This together with Lemma~\ref{lmfurther}\eqref{enuT} yields that $R(g)^+\in T\cap R(G)^+=R(N)^+$, and so $g\in N$, as claimed.

    From the above claim we derive that $\kappa'\leq |N|$. Since Lemma~\ref{lmXplus}\eqref{enuX7} shows that $R(G)^+/R(N)^+$ is a nontrivial $2'$-group, it follows that $\kappa'\leq|N|\leq r/3$. As a consequence, recalling from~\eqref{eqDoubleCoset} and Lemma~\ref{lmXplus}\eqref{enuX1}\eqref{enuX2} that $2\kappa\leq |X|/|H|=r$, we conclude that
    \[
        \frac{\kappa+\kappa'}{2}\leq \frac{\frac{r}{2}+\frac{r}{3}}{2}=\frac{5}{12}r<\frac{|\calI(G)|}{2}+\frac{11}{24}r+2(\log_2 r)^2.
    \]
    This completes the proof.
\end{proof}

We are now ready to establish the main result of this subsection.

\begin{proposition}\label{propT}
    $|\mathcal{T}_\delta|\leq 2^{\bfc(G)-r/24+(r^{2\delta}+r^\delta+6)(\log_2r)^2+(2+\delta)\log_2r}$.
\end{proposition}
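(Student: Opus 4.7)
The plan is to combine the two preceding lemmas multiplicatively. Since every $S \in \mathcal{T}_\delta$ comes equipped with a chosen subgroup $X(S) \in \mathcal{X}$, partitioning $\mathcal{T}_\delta$ according to the value of $X(S)$ gives
\[
    |\mathcal{T}_\delta| \;=\; \sum_{X \in \mathcal{X}} |\{S \in \mathcal{T}_\delta \mid X(S) = X\}| \;\leq\; |\mathcal{X}| \cdot \max_{X \in \mathcal{X}} |\{S \in \mathcal{T}_\delta \mid X(S) = X\}|.
\]
First I would apply Lemma~\ref{lmX} to bound each inner set uniformly by $2^{|\calI(G)|/2 + 11r/24 + 2(\log_2 r)^2}$, and then apply Lemma~\ref{lmXprime} to bound $|\mathcal{X}|$ by $2^{(r^{2\delta} + r^\delta + 4)(\log_2 r)^2 + (2+\delta)\log_2 r}$. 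Multiplying these two estimates yields
\[
    |\mathcal{T}_\delta| \;\leq\; 2^{|\calI(G)|/2 + 11r/24 + (r^{2\delta} + r^\delta + 6)(\log_2 r)^2 + (2+\delta)\log_2 r}.
\]

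The final step is a short arithmetic check: recalling that $\bfc(G) = (r + |\calI(G)|)/2$, we have
\[
    \frac{|\calI(G)|}{2} + \frac{11r}{24} \;=\; \frac{r}{2} + \frac{|\calI(G)|}{2} - \frac{r}{24} \;=\; \bfc(G) - \frac{r}{24},
\]
so the exponent matches the one claimed in the statement. There is essentially no obstacle at this stage, since the substantive work has already been carried out in Lemmas~\ref{lmXprime} and~\ref{lmX}; Proposition~\ref{propT} is simply their combination, and the only thing requiring care is keeping the $(\log_2 r)^2$ and $\log_2 r$ contributions correctly bookkept when multiplying the two bounds.
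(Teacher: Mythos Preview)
Your proposal is correct and follows essentially the same approach as the paper: partition $\mathcal{T}_\delta$ by the value of $X(S)$, apply Lemma~\ref{lmXprime} for $|\mathcal{X}|$ and Lemma~\ref{lmX} for the fibers, multiply, and rewrite $|\calI(G)|/2 + 11r/24$ as $\bfc(G) - r/24$. The bookkeeping of the $(\log_2 r)^2$ and $\log_2 r$ terms is done correctly.
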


\begin{proof}
    We obtain from Lemma~\ref{lmXprime} that $|\mathcal{X}|\leq 2^{(r^{2\delta}+r^\delta+4)(\log_2r)^2+(2+\delta)\log_2r}$ and from Lemma~\ref{lmX} that, for each $X\in \mathcal{X}$, the number of $S\in \mathcal{T}_\delta$ with $X(S)=X$ is at most $2^{|\calI(G)|/2+11r/24+2(\log_2 r)^2}$. Hence, recalling that $\bfc(G)=(r+|\calI(G)|)/2$, we derive
    \begin{align*}
        |\mathcal{T}_\delta|&\leq 2^{(r^{2\delta}+r^\delta+4)(\log_2r)^2+(2+\delta)\log_2r}\cdot 2^{\frac{|\calI(G)|}{2}+\frac{11}{24}r+2(\log_2 r)^2}\\
        &=2^{\bfc(G)-\frac{r}{24}+(r^{2\delta}+r^\delta+6)(\log_2r)^2+(2+\delta)\log_2r}.
    \end{align*}
    This completes the proof.
\end{proof}

\subsection{Bounding $|\mathcal{W}_\delta|$}\label{subsec42}

This subsection is devoted to bounding $|\mathcal{W}_\delta|$. The argument is largely based on the method of Babai and Godsil~\cite[Section~4]{BG1982}, further developed by Morris, Moscatiello and Spiga~\cite[Section~4]{MMS2022}, where we reproduce certain parts with minor modifications adapted to our context.

Recall that $G$ is an abelian group of order $r$. Let $N$ be a subgroup of $G$ of order $n>1$ such that $G/N\cong C_b$ for some integer $b\geq 2$, and label coset representatives of $N$ in $G$ as $\gamma_0,\ldots,\gamma_{b-1}$ such that $N\gamma_iN\gamma_j=N\gamma_{i+j}$, where subscripts of $\gamma$ are counted modulo $b$. Set $\calO_i=N\gamma_i$ for each $i\in \mathbb{Z}_b$. Then $\calO_i$s are orbits of $R(N)$ on $G$ with $|\calO_i|=|N|$. For $S\in \calS$, $u\in G$ and $j\in \mathbb{Z}_b$, let
\begin{equation}\label{eqSigmaSuj}
    \sigma(S,u,j)=S\cap Su\cap \calO_j.
\end{equation}
If $u\in \calO_i=N\gamma_i$, then $\gamma_i^{-1}u\in N$, and so $\calO_{j-i}u=N\gamma_j\gamma_i^{-1}u=N\gamma_j=\calO_j$. In this case,
\begin{align}
    \sigma(S,u,j)&=(S\cap \calO_j)\cap (Su\cap \calO_j)\nonumber\\
    &=(S\cap \calO_j)\cap ((S\cap \calO_ju^{-1})u)=(S\cap \calO_j)\cap ((S\cap \calO_{j-i})u).\label{eqSigma}
\end{align}
For distinct $u,v\in \calO_i$, let
\[
    \Psi_i(\{u,v\},j)=\{S\in \calS \mid |\sigma(S,u,j)|=|\sigma(S,v,j)|\}
\]
for each $j\in \mathbb{Z}_b$, and let
\[
    \Psi_i(\{u,v\})=\bigcap_{j\in \mathbb{Z}_b-\{0,i\}} \Psi_i(\{u,v\},j).
\]

\begin{lemma}\label{lmPsi}
    For each $i\in \mathbb{Z}_b- \{0\}$ and $u,v\in \calO_i$ with $u\neq v$, the size $|\Psi_i(\{u,v\})|\leq 2^{\bfc(G)-\frac{2}{25}b+1}$.
\end{lemma}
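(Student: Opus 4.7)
The strategy is to translate the $b - 2$ balance conditions into constraints on the slices $S_k := S \cap \calO_k$ and then extract a geometric savings by iteration. Setting $w := u^{-1}v$, which is a nontrivial element of $N$ since $u, v$ are distinct members of $\calO_i = N\gamma_i$, formula~\eqref{eqSigma} rewrites the $j$-th condition $|\sigma(S,u,j)| = |\sigma(S,v,j)|$ as
\[
    |S_j \cap A_j| = |S_j \cap A_j w|,
\]
where $A_j := S_{j-i}u \subseteq \calO_j$. Thus the $j$-th condition imposes a ``balance'' relation between the adjacent slices $S_j$ and $S_{j-i}$.

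Next, I would organize the $b-2$ conditions according to the orbits of the shift map $\tau\colon k \mapsto k - i$ on $\mathbb{Z}_b$. Writing $d := \gcd(b,i)$ and $\ell := b/d$, this map has $d$ orbits, each of length $\ell$. Since $i \equiv -(\ell-1)i \pmod b$, the orbit through $0$ also contains $i$ and contributes $\ell - 2$ conditions forming a path along the slices $S_{-i}, S_{-2i}, \ldots, S_{-(\ell-1)i} = S_i$; the remaining $d-1$ orbits contribute $\ell$ conditions each, each organized as a cycle. In this way the $b-2$ constraints are assembled into a simple graph on the slices.

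The crux of the argument is a per-step counting identity: fixing the slice $S_{j-i}$, the number of subsets $S_j \subseteq \calO_j$ satisfying the $j$-th condition equals $\binom{2m}{m}\, 2^{|\calO_j| - 2m}$, where $2m := |A_j \triangle A_j w|$. This is at most $\tfrac{1}{2}\cdot 2^{|\calO_j|}$ whenever $m \geq 1$, giving a factor-of-two saving at that step. The degenerate case $m = 0$ forces $S_{j-i}$ to be a union of cosets of $\langle w \rangle$ inside $\calO_{j-i}$, restricting $S_{j-i}$ to at most $2^{|N|/|w|}$ choices out of $2^{|N|}$ possibilities, which itself is a geometric saving (since $|w| \geq 2$). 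By processing each orbit step-by-step and trading off generic and degenerate steps, and by then using $S = S^{-1}$ to glue $S_k$ to $S_{-k}$ (roughly halving the number of independent slices), one collects total savings of at least $2^{2b/25 - 1}$ relative to $|\calS| = 2^{\bfc(G)}$, which gives the stated bound.

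The main obstacle is this final tallying: the constant $\tfrac{2}{25}$ reflects a worst-case trade-off between ``generic'' steps (each costing roughly a factor of $\tfrac12$) and ``degenerate'' steps (each confining a slice to a small $\langle w \rangle$-symmetric sub-family), across all $d$ orbits and under the additional involution coupling of slices at $j$ and $-j$. Making the accounting work simultaneously for the path topology of the orbit through $0$, the cycle topology of the remaining orbits, and the inverse-closure constraint requires the most delicate case analysis and is the technical heart of the proof.
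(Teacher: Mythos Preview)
Your plan diverges from the paper's argument and leaves the decisive step undone. The paper does not iterate along orbits of $j\mapsto j-i$. Instead it invokes \cite[Proposition~4.2]{MMS2022} to obtain, for each $j$ with $2j\notin\{0,\pm i,\pm 2i\}$ (a set $\Lambda$ of size at least $b-10$), the bound $|\Psi_i(\{u,v\},j)|\le\tfrac34\,2^{\bfc(G)}$; crucially, this $3/4$ already incorporates the inverse-closure constraint on $S$. It then builds an auxiliary graph on $\{\{j,-j\}:j\in\mathbb{Z}_b\}$ of maximum degree $4$, uses the Caro--Tur\'an--Wei bound to extract an independent set $\Delta\subseteq\Lambda$ with $|\Delta|\ge(b-10)/5$, and checks that for distinct $j,k\in\Delta$ the events $\Psi_i(\{u,v\},j)$ and $\Psi_i(\{u,v\},k)$ depend on disjoint collections of slice pairs $\{S_\ell,S_{-\ell}\}$, hence are independent. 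Multiplying gives $(3/4)^{(b-10)/5}\,2^{\bfc(G)}\le 2^{\bfc(G)-2b/25+1}$.

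Your per-step identity $\binom{2m}{m}2^{|\calO_j|-2m}$ is correct when $S_j$ ranges over \emph{all} subsets of $\calO_j$, but inverse-closure forces $S_{-j}=S_j^{-1}$, so along any shift-orbit roughly half the slices are determined once their partners are chosen; the factor-of-$\tfrac12$ saving at step $j$ therefore cannot be claimed independently of whatever saving was already booked at step $-j$ (or at a step linked to $-j$ through another constraint). Your degenerate branch likewise constrains a slice $S_{j-i}$ that may already have been processed. Reconciling these overlaps simultaneously for the path on the orbit of $0$, the cycles on the remaining $d-1$ orbits, and the involution $j\leftrightarrow -j$, so as to extract the constant $2/25$, is precisely the ``technical heart'' you flag but do not carry out, and it is not clear this can be done without essentially rebuilding the independence machinery the paper uses. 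The paper sidesteps the whole issue by accepting the weaker $3/4$ per event in exchange for genuine probabilistic independence over roughly $b/5$ events.
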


\begin{proof}
    Define an auxiliary graph $\Gamma$ with vertex set $\{\{j,-j\}\mid j\in \mathbb{Z}_b\}$ such that
    \begin{enumerate}[\rm(i)]
        \item if $2i=0$, then for each $j\in\mathbb{Z}_b$, the vertex $\{j,-j\}$ is adjacent to
        \[
            \{j-i,-j+i\} \ \text{ and }\ \{j+i,-j-i\};
        \]
        \item if $2i\neq 0$, then for each $j\in\mathbb{Z}_b$, the vertex $\{j,-j\}$ is adjacent to
        \[
            \{j-i,-j+i\},\ \{j+i,-j-i\},\  \{j-2i,-j+2i\}, \ \text{and}\  \{j+2i,-j-2i\}.
        \]
    \end{enumerate}
    Obviously, each vertex of $\Gamma$ has valency at most $4$. Let
    \[
        \Lambda=\{j\in \mathbb{Z}_b\mid 2j\notin\{0,\pm i,\pm 2i\}\}.
    \]
    Then $|\Lambda|\geq b-10$, and no vertex $\{j,-j\}$ with $j\in\Lambda$ has a loop. Moreover, it follows from~\cite[Proposition~4.2]{MMS2022} that
    \begin{equation}\label{eqPsi}
        |\Psi_i(\{u,v\},j)|\leq \frac{3}{4}\cdot2^{\bfc(G)} \ \text{ for each}\ j\in \Lambda.
    \end{equation}
    Let $\Gamma'$ be the induced subgraph of $\Gamma$ on $\{\{j,-j\}\mid j\in \Lambda\}$. For each $j\in \Lambda$, we derive from $2j\neq 0$  that $j\neq -j$, and so the graph $\Gamma'$ has $|\Lambda|$ vertices. By Caro--Tur\'{a}n--Wei~\cite{C1979,T1941,W1981}, $\Gamma'$ has an independent set of cardinality at least
    \[
        \sum\limits_{j\in \Lambda} \frac{1}{\deg_\Gamma(\{j,-j\})+1}\geq \frac{b-10}{5}.
    \]
    Let $\{\{j,-j\}\mid j\in \Delta\}$ be such an independent set, where $\Delta\subseteq \Lambda$ with $|\Delta|\geq (b-10)/5$. Then for distinct $j,k\in \Delta$,
    \[
        \{\pm j,\pm(j-i),\pm(j+i)\}\cap \{\pm k,\pm(k-i),\pm(k+i)\}=\varnothing.
    \]
    This together with~\eqref{eqSigma} yields that the events $\Psi_i(\{u,v\},j)$ and $\Psi_i(\{u,v\},k)$ are independent. Hence, by~\eqref{eqPsi},
    \[
        |\Psi_i(\{u,v\})|\leq \Big(\frac{3}{4}\Big)^{|\Delta|}\cdot2^{\bfc(G)}\leq \Big(\frac{3}{4}\Big)^{\frac{b-10}{5}}\cdot2^{\bfc(G)}=2^{\bfc(G)-\frac{\log_2(4/3)}{5}(b-10)}\leq 2^{\bfc(G)-\frac{2}{25}b+1}.
    \]
    This completes the proof.
\end{proof}

\begin{remark}
    The proof of Lemma~\ref{lmPsi} imitates that of~\cite[Proposition~4.4]{MMS2022}. However, there is a minor error in their proof: the independence of their set $\calI$ does not guarantee that the neighborhoods of $\{j_u,j_u^{-1}\}$ and $\{j_v,j_v^{-1}\}$ are disjoint. This issue can be resolved by slightly modifying the definition of the auxiliary graph, in a manner similar to our construction above. Such a modification change some coefficients in the statement of~\cite[Proposition~4.4]{MMS2022}, but does not affect their main result.
\end{remark}

We are now ready to prove the main result of this subsection.

\begin{proposition}\label{propW}
    $|\mathcal{W}_\delta|\leq 2^{\bfc(G)-\frac{2}{25}r^\delta+3\log_2r+1}$.
\end{proposition}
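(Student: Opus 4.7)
For each $S\in \mathcal{W}_\delta$ I would set $X=X(S)$, $H=X_{1^+}$, and let $N<G$ satisfy $R(N)=\Core_X(R(G))$. By definition of $\mathcal{W}_\delta$, $n:=|N|<r^{1-\delta}$, hence $b:=r/n>r^\delta$, and by Lemma~\ref{lmXplus}\eqref{enuX7} the quotient $G/N$ is cyclic of order $b$. Fix coset representatives $\gamma_0=1,\gamma_1,\ldots,\gamma_{b-1}$ and $R(N)$-orbits $\calO_i=N\gamma_i$ as in the preamble to Lemma~\ref{lmPsi}. The plan is to attach to each $S$ a triple $(i,u,v)$ with $i\in\mathbb{Z}_b-\{0\}$ and distinct $u,v\in \calO_i$ such that $S\in \Psi_i(\{u,v\})$, then conclude by Lemma~\ref{lmPsi} and a counting argument in the spirit of~\cite[Section~4]{MMS2022}.

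To construct such a triple, the key task is to exhibit an element $h\in X$ that lies in the kernel of the induced action $X\to\Sym(G/N)$ (so $h$ stabilizes every $\calO_j$ setwise) and that moves some point $u\in \calO_i$ with $i\neq 0$; then $v:=u^h$ gives distinct $u,v\in \calO_i$, and for every $j\in\mathbb{Z}_b-\{0,i\}$ the equalities $S^h=S$ and $\calO_j^h=\calO_j$ yield
\[
|\sigma(S,u,j)|=|(S\cap Su\cap \calO_j)^h|=|S\cap Sv\cap \calO_j|=|\sigma(S,v,j)|,
\]
whence $S\in \Psi_i(\{u,v\})$. The maximality of $R(G)$ in $X$ together with $\Nor_X(R(G))=R(G)$ forces $H\neq 1$ via Lemma~\ref{lmXplus}\eqref{enuX1}, and the existence of the desired $h$ should follow by pushing the structural analysis of Lemma~\ref{lmXplus}, specifically combining parts~\eqref{enuX5}, \eqref{enuX7} and the irreducibility of $H^+R(N)^+/R(N)^+$ as an $R(G)^+/R(N)^+$-module in part~\eqref{enuX8}, together with Lemma~\ref{lmfurther}\eqref{enuM}. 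Conceptually, one argues that the kernel of $X\to\Sym(G/N)$ strictly exceeds $R(N)$, producing an element outside $R(G)$ that acts trivially on the coset structure yet nontrivially on some $\calO_i$ with $i\neq 0$.

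For the counting step, the indices $i\in\mathbb{Z}_b-\{0\}$ and the unordered pairs $\{u,v\}\subseteq G$ contribute at most $r\cdot r^2=r^3$ options, while $N$ is essentially determined by $(u,v)$ (it must contain $uv^{-1}$ and produce the cyclic quotient of order $b>r^\delta$ dictated by $X(S)$), so no additional enumeration factor is incurred. Combining with Lemma~\ref{lmPsi} and using $b>r^\delta$ would give
\[
|\mathcal{W}_\delta|\leq r^3\cdot 2^{\bfc(G)-\frac{2}{25}r^\delta+1}=2^{\bfc(G)-\frac{2}{25}r^\delta+3\log_2 r+1},
\]
as required. The principal obstacle is the structural step in the second paragraph: constructing, uniformly for every $S\in \mathcal{W}_\delta$, a kernel element $h$ witnessing the pair $(u,u^h)$ inside a common non-trivial orbit. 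This is the technically delicate point where the full force of Lemma~\ref{lmXplus} and Lemma~\ref{lmfurther} must be brought to bear; the remaining counting is then routine.
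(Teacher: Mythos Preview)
Your overall strategy matches the paper's, but the ``structural step'' contains two related gaps.

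First, the assertion $S^h=S$ is not justified and, read literally, is false. Your element $h$ lies in $X\leq B(S)=\Aut(D(\Cay(G,S)))_{G^+}$, which acts on $G^+\cup G^-$, not on $G$; in particular $X^+$ need not be a subgroup of $\Aut(\Cay(G,S))$---that is precisely the phenomenon of unexpected automorphisms. What \emph{does} follow from $h$ fixing $1^+$ is $(S^-)^h=S^-$ (the neighbourhood of $1^+$ in $D(\Gamma)$), and from $(u^+)^h=v^+$ that $((Su)^-)^h=(Sv)^-$. The equality $|\sigma(S,u,j)|=|\sigma(S,v,j)|$ must therefore be argued on $G^-$, which requires $h$ to stabilize each $\calO_j^-$, not merely each $\calO_j^+$. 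The paper secures this by defining $K$ as the subgroup of $X$ stabilizing every $R(N)$-orbit on \emph{both} $G^+$ and $G^-$; your ``kernel of $X\to\Sym(G/N)$'' is ambiguous on exactly this point.

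Second, the existence of the required element is not established, and your suggested route via Lemma~\ref{lmXplus}\eqref{enuX5}\eqref{enuX7}\eqref{enuX8} and Lemma~\ref{lmfurther}\eqref{enuM} does not lead there. The paper instead argues: if $(K^+)_{1^+}=1$ then $K^+=R(N)^+$, so $R(G)^+/R(N)^+$ is a maximal, self-normalizing, \emph{core-free} abelian regular subgroup of $X^+/R(N)^+$, and a second application of~\cite[Theorem~3.2]{DSV2016} to this quotient forces a nontrivial core, a contradiction. Hence $(K^+)_{1^+}\neq1$, and a short conjugation trick then produces $f\in K$ with $f^+\in(K^+)_{1^+}$ and $f|_{\calO_i^+}\neq1$ for some $i\neq0$. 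Lemma~\ref{lmfurther}\eqref{enuM} plays no role here; it belongs to the $\mathcal{T}_\delta$ analysis.
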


\begin{proof}
Take an arbitrary $S\in \mathcal{W}_\delta$, let $X=X(S)$, and let $N\leq G$ such that $R(N)=\Core_X(R(G))$. Recall from Lemma~\ref{lmA}\eqref{enuAb} that $X^+\cong X$. Let $n=|N|$ and $b=r/n$. Noting from Lemma~\ref{lmXplus}\eqref{enuX3}\eqref{enuX7} that $n>1$ and $G/N$ is cyclic of order $b>1$, we adopt the notation above Lemma~\ref{lmPsi}. Then $\{\calO_0^+,\calO_1^+,\ldots,\calO_{b-1}^+\}$ is an $X$-invariant partition of $G^+$. Since $S\in\calS_4$, it follows that
\begin{equation}\label{eqMaxNor}
    R(G)^+ \ \text{is maximal in}\ X^+, \ \text{and}\ \Nor_{X^+}(R(G)^+)=R(G)^+.
\end{equation}
Then $R(G)^+/R(N)^+$ is core-free and maximal in $X^+/R(N)^+$, and $\Nor_{X^+/R(N)^+}(R(G)^+/R(N)^+)=R(G)^+/R(N)^+$. Let $K$ be the subgroup of $X$ stabilizing each orbit of $R(N)$ on $G^+\cup G^-$. Then $X/K$ is a permutation group on $(G/N)^+\cup (G/N)^-$, and $X^+/K^+$ is a permutation group on $(G/N)^+$.

Since $R(N)^+\leq K^+$ is transitive on $N^+$, we have $K^+=(K^+)_{1^+}R(N)^+$. Suppose $(K^+)_{1^+}=1$. Then $K^+=R(N)^+$, and so $R(G)^+/R(N)^+=R(G)^+/K^+$ is a regular subgroup of $X^+/R(N)^+=X^+/K^+$. Hence, applying~\cite[Theorem~3.2]{DSV2016} to $X^+/R(N)^+$ and $R(G)^+/R(N)^+$, we obtain that $R(G)^+/R(N)^+$ is not core-free in $X^+/R(N)^+$, a contradiction. Thus, $(K^+)_{1^+}\neq 1$.

We claim that there exists $f\in K$ such that
\begin{equation}\label{eqF}
    f^+\in (K^+)_{1^+}, \ \text{and}\ f|_{\calO_i^+}\neq 1\ \text{for some}\ i\in\mathbb{Z}_b-\{0\}.
\end{equation}
In fact, since $(K^+)_{1^+}\neq 1$, there exists $g\in K$ such that $g|_{O_j^+}\neq 1$ for some $j\in\mathbb{Z}_b$. If $g|_{\calO_i^+}\neq 1$ for some $i\in\mathbb{Z}_b-\{0\}$, then we are done. Otherwise, $g|_{\calO_i^+}=1$ for every $i\in\mathbb{Z}_b-\{0\}$, and so $j=0$. Noting that $X^+/K^+$ is transitive on $\{\calO_0^+,\ldots,\calO_{b-1}^+\}$, there exists $x\in X$ such that $(\calO_1^+)^x=\calO_0^+$. It follows that $(\calO_0^+)^x=\calO_k^+$ for some $k\in\mathbb{Z}_b-\{0\}$. Then we derive from $g|_{\calO_1^+}=1$ and $g|_{\calO_0^+}\neq 1$ that $g^x|_{\calO_0^+}=(x^{-1}gx)|_{\calO_0^+}=1$ and $g^x|_{\calO_k^+}=(x^{-1}gx)|_{\calO_k^+}\neq 1$. Moreover, since $1^+\in\calO_0^+$ and $K\trianglelefteq X$, it follows that $(1^+)^{g^x}=1^+$ and $g^x\in K$. Therefore, $f:=g^x$ satisfies~\eqref{eqF}.

We now let $f\in K$ satisfy~\eqref{eqF}. Then $(1^+)^f=1^+$, $(\calO_i^\varepsilon)^f=\calO_i^\varepsilon$ for each $i\in\mathbb{Z}_b$ and $\varepsilon\in\{+,-\}$, and there exists $i\in \mathbb{Z}_b-\{0\}$ and distinct $u,v\in \calO_i$ such that $(u^+)^f=v^+$. As $f\in \Aut(D(\Cay(G,S)))$, we obtain $(S^-)^f=S^-$ and $((Su)^-)^f=(Sv)^-$. Thus, for each $j\in\mathbb{Z}_b$,
\[
    ((S\cap Su\cap \calO_j)^-)^f=(S^-\cap (Su)^-\cap\calO_j^-)^f=S^-\cap (Sv)^-\cap \calO_j^-=(S\cap Sv\cap \calO_j)^-.
\]
This combined with~\eqref{eqSigmaSuj} implies that $|\sigma(S,u,j)|=|\sigma(S,v,j)|$, and so
\[
    \mathcal{W}_\delta\subseteq \bigcup_{i\in\mathbb{Z}_b-\{0\}}\Big(\bigcup_{\{u,v\}\subseteq \calO_i}\Psi_i(\{u,v\})\Big).
\]
For each $i\in \mathbb{Z}_b-\{0\}$, the number of choices for $\{u,v\}\in \calO_i$ is at most $\binom{n}{2}$. Then we derive from Lemma~\ref{lmPsi} that
\[
    |\mathcal{W}_\delta|
    \leq (b-1)\binom{n}{2}2^{\bfc(G)-\frac{2}{25}b+1}
    <r^3\cdot 2^{\bfc(G)-\frac{2}{25}b+1}
    =2^{\bfc(G)-\frac{2}{25}b+3\log_2r+1}.
\]
As the definition of $\mathcal{W}_\delta$ gives $n<r^{1-\delta}$, we have $b=r/n> r^\delta$, and so
\[
    |\mathcal{W}_\delta|< 2^{\bfc(G)-\frac{2}{25}b+3\log_2r+1}< 2^{\bfc(G)-\frac{2}{25}r^\delta+3\log_2r+1}.
\]
This completes the proof.
\end{proof}

Combining Propositions~\ref{propT} and~\ref{propW}, we derive that
\begin{align*}
    \frac{|\calS_4|}{|\calS|}=\frac{|\mathcal{T}_\delta|+|\mathcal{W}_\delta|}{2^{\bfc(G)}}
    &\leq \frac{2^{\bfc(G)-\frac{r}{24}+(r^{2\delta}+r^\delta+6)(\log_2r)^2+(2+\delta)\log_2r}+2^{\bfc(G)-\frac{2}{25}r^\delta+3\log_2r+1}}{2^{\bfc(G)}}\\
    &=2^{-\frac{r}{24}+(r^{2\delta}+r^\delta+6)(\log_2r)^2+(2+\delta)\log_2r}+2^{-\frac{2}{25}r^\delta+3\log_2r+1},
\end{align*}
as stated in the following proposition.

\begin{proposition}\label{propS4}
The proportion $|\calS_4|/|\calS|$ is at most
\[
    2^{-\frac{r}{24}+(r^{2\delta}+r^\delta+6)(\log_2r)^2+(2+\delta)\log_2r}+2^{-\frac{2}{25}r^\delta+3\log_2r+1}.
\]
\end{proposition}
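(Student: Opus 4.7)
The plan is to observe that Proposition~\ref{propS4} is a direct consequence of the two bounds already established in this subsection, namely Propositions~\ref{propT} and~\ref{propW}. By the definitions given at the start of Section~\ref{sec4}, the set $\calS_4$ decomposes as the disjoint union
\[
    \calS_4 = \mathcal{T}_\delta \sqcup \mathcal{W}_\delta,
\]
where $\mathcal{T}_\delta$ consists of those $S\in\calS_4$ with $|\Core_{X(S)}(R(G))|\geq r^{1-\delta}$ and $\mathcal{W}_\delta$ is its complement in $\calS_4$. Thus $|\calS_4| = |\mathcal{T}_\delta| + |\mathcal{W}_\delta|$, and everything reduces to combining the two bounds we already have in hand.

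Concretely, I would substitute the bound $|\mathcal{T}_\delta|\leq 2^{\bfc(G) - r/24 + (r^{2\delta}+r^\delta+6)(\log_2 r)^2 + (2+\delta)\log_2 r}$ from Proposition~\ref{propT} and the bound $|\mathcal{W}_\delta|\leq 2^{\bfc(G) - \frac{2}{25}r^\delta + 3\log_2 r + 1}$ from Proposition~\ref{propW} into $|\calS_4| = |\mathcal{T}_\delta|+|\mathcal{W}_\delta|$, divide through by $|\calS|=2^{\bfc(G)}$, and read off that $|\calS_4|/|\calS|$ is at most the sum of the two exponential expressions stated in the proposition. There is no real obstacle here: the substantive work lies in establishing Propositions~\ref{propT} and~\ref{propW} (where bounding $|\mathcal X|$ via equivalence classes of right triples, and adapting the Babai--Godsil style counting to our bipartite setting, were the genuinely delicate steps), and the present proposition is simply the bookkeeping that packages them together for use in the final proof of Theorem~\ref{thmNumber}.
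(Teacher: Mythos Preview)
Your proposal is correct and matches the paper's own argument exactly: the paper likewise writes $|\calS_4|/|\calS|=(|\mathcal{T}_\delta|+|\mathcal{W}_\delta|)/2^{\bfc(G)}$, substitutes the bounds from Propositions~\ref{propT} and~\ref{propW}, and cancels the $2^{\bfc(G)}$ factor to obtain the stated sum. There is nothing to add.
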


\subsection{Bounding $|\calS_5|$}\label{subsec43}
Recall that
\begin{align*}
    \calS_5 = \{S\in \calS_1 \mid &\
        \text{there exists } X\leq B(S) \ \text{such that } R(G)\rtimes\langle \iota\rangle = \Nor_X(R(G))<X \text{ and } \Nor_X(R(G)) \\
    &\ \text{is the unique group with the property that } R(G)<\Nor_X(R(G))<X\}.
\end{align*}
In this subsection, we give an upper bound for $|\calS_5|/|\calS|$. For each $S\in \calS_5$, fix a subgroup $X(S)$ of $B(S)$ such that $R(G)\rtimes \la\iota\ra=\Nor_{X(S)}(R(G))\leq X(S)$ and $\Nor_{X(S)}(R(G))$ is the unique group with the property that $R(G)<\Nor_{X(S)}(R(G))<X(S)$. Let
\[
    \mathcal{X}=\{X(S)\mid S\in \calS_5\}.
\]
For each $X\in \mathcal{X}$, applying~\cite[Theorem~3.3]{DSV2016} and the classification of subgroups of $\PGL_2(q)$ (see~\cite[Sections~II.7 and~II.8]{H1967}, for example) to $X^+$ and $X^-$, we obtain the following lemma.

\begin{lemma}\label{lm33}
    Let $\varepsilon\in\{+,-\}$. For each $X\in \mathcal{X}$, there exists a prime power $q\geq 3$ such that the following statements hold.
    \begin{enumerate}[\rm(a)]
        \item \label{enu33a} $X^\varepsilon=U\times \Center(X^\varepsilon)$ with $(X^\varepsilon)_{1^\varepsilon}\leq U\cong\PGL_2(q)$ and $\Center(X^\varepsilon)\cong C_2^\ell$ for some integer $\ell\geq 0$.
        \item \label{enu33b} $R(G)^\varepsilon=C\times \Center(X^\varepsilon)$ with $C\cong C_{q+1}$.
        \item \label{enu33c} $(X^\varepsilon)_{1^\varepsilon}\cong C_q\rtimes C_{q-1}$ and $(X^\varepsilon)_{1^\varepsilon}\cap U' \cong C_q\rtimes C_{(q-1)/2}$.
    \end{enumerate}
\end{lemma}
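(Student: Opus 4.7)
The plan is to transfer the conditions defining $\mathcal{X}$ from $X$ to the permutation group $X^\varepsilon$ on $G^\varepsilon$ and then invoke the two external results cited. Fix $X\in\mathcal{X}$ and $\varepsilon\in\{+,-\}$. By Lemma~\ref{lmA}\eqref{enuAb}, the restriction map $x\mapsto x^\varepsilon$ is an isomorphism from $X$ onto $X^\varepsilon$ that sends $R(G)$ to $R(G)^\varepsilon$, so the interval of subgroups between $R(G)$ and $X$ is carried bijectively to the interval between $R(G)^\varepsilon$ and $X^\varepsilon$. In particular, $R(G)^\varepsilon$ is a regular abelian subgroup of $X^\varepsilon\leq\Sym(G^\varepsilon)$, and $\Nor_{X^\varepsilon}(R(G)^\varepsilon)$ is the unique subgroup strictly between $R(G)^\varepsilon$ and $X^\varepsilon$.

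With these hypotheses in hand, I would apply \cite[Theorem~3.3]{DSV2016} to $X^\varepsilon$. That result classifies transitive permutation groups containing a regular abelian subgroup whose only proper overgroup is the normalizer, and it yields the decomposition $X^\varepsilon=U\times\Center(X^\varepsilon)$ with $(X^\varepsilon)_{1^\varepsilon}\leq U$, together with the parallel decomposition $R(G)^\varepsilon=C\times\Center(X^\varepsilon)$ and the isomorphism types $U\cong\PGL_2(q)$, $C\cong C_{q+1}$, and $\Center(X^\varepsilon)\cong C_2^\ell$, for some prime power $q\geq 3$ and some integer $\ell\geq 0$, with $U$ acting as $\PGL_2(q)$ does naturally on the set of $\Center(X^\varepsilon)$-orbits (a set of size $q+1$). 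This immediately gives parts~\eqref{enu33a} and~\eqref{enu33b}.

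For part~\eqref{enu33c}, I would apply the classification of subgroups of $\PGL_2(q)$ in \cite[Sections~II.7, II.8]{H1967}. Since $U$ and $\Center(X^\varepsilon)$ commute and $\Center(X^\varepsilon)$ is transitive on each block, the stabilizer in $U$ of the point $1^\varepsilon$ coincides with the stabilizer in $U$ of the block containing $1^\varepsilon$; thus $(X^\varepsilon)_{1^\varepsilon}$ is a point stabilizer of $U$ in its natural action of degree $q+1$. The classification then identifies this stabilizer as a Borel subgroup isomorphic to $C_q\rtimes C_{q-1}$, and its intersection with $U'\cong\PSL_2(q)$ as the Borel subgroup $C_q\rtimes C_{(q-1)/2}$ of $\PSL_2(q)$, delivering~\eqref{enu33c}.

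The step I expect to require most care is verifying that the hypotheses of \cite[Theorem~3.3]{DSV2016} are exactly those furnished by the definition of $\calS_5$ after transfer to $X^\varepsilon$, and in particular that the bound $q\geq 3$ is guaranteed (since $q=2$ would collapse $\PGL_2(q)$ to $\Sym(3)$ and could fail the uniqueness condition on intermediate subgroups in uninteresting ways). The remainder is routine bookkeeping inside $\PGL_2(q)$ using well-known Borel-subgroup structure.
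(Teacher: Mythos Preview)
Your proposal is correct and follows exactly the route the paper takes: the paper simply states that the lemma is obtained ``by applying~\cite[Theorem~3.3]{DSV2016} and the classification of subgroups of $\PGL_2(q)$ (see~\cite[Sections~II.7 and~II.8]{H1967}, for example) to $X^+$ and $X^-$'', and you have fleshed out precisely this application, including the transfer from $X$ to $X^\varepsilon$ via Lemma~\ref{lmA}\eqref{enuAb} and the identification of the point stabilizer as a Borel subgroup.
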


The upper bound on $|\calS_5|$ is obtained in two steps: first, we give a bound on $|\mathcal{X}|$ in Lemma~\ref{lmS5X}; subsequently, for each fixed $X\in\mathcal{X}$, we bound the number of $S\in\calS_5$ with $X(S)=X$ in Lemma~\ref{lm3.3DoubleCoset}.

\begin{lemma}\label{lmS5X}
    $|\mathcal{X}|\leq 2^{2(\log_2r)^2+5\log_2r}$.
\end{lemma}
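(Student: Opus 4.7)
The strategy is to parameterize $\mathcal{X}$ by the pair $(q,\ell)$ coming from Lemma~\ref{lm33} and count abstractly. Since $|R(G)^\varepsilon|=r$, Lemma~\ref{lm33}\eqref{enu33b} forces $(q+1)\cdot 2^\ell=r$; as $2^\ell\leq r$, there are at most $\log_2 r+1$ admissible pairs $(q,\ell)$. Fix such a pair and write $\mathcal{X}(q,\ell)$ for the corresponding subset of $\mathcal{X}$. It is enough to bound $|\mathcal{X}(q,\ell)|$ for each pair and then sum.

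Next, Lemma~\ref{lmA}\eqref{enuAb} shows that $X\to X^+$ is an isomorphism, so Lemma~\ref{lmTriple}\eqref{eunTriplea} lets us replace each $X\in\mathcal{X}(q,\ell)$ by the right triple $(X^+/H,X^+,R(G)^+)$, where $H=X^+_{1^+}\cong C_q\rtimes C_{q-1}$ and $R(G)^+\cong C_{q+1}\times C_2^\ell$. Lemma~\ref{lmTriple}\eqref{eunTriplec} then bounds the number of equivalence classes that can arise from $\mathcal{X}(q,\ell)$ by $\ell_{q,\ell}\cdot m_{q,\ell}$, where $\ell_{q,\ell}$ is the number of conjugacy classes of subgroups of $X^+$ isomorphic to $H$, and $m_{q,\ell}$ is the number of conjugacy classes of regular subgroups isomorphic to $R(G)^+$.

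To bound $\ell_{q,\ell}$ and $m_{q,\ell}$, the plan is to exploit the direct-product decomposition $X^+=U\times Z$ with $U\cong\PGL_2(q)$ and $Z\cong C_2^\ell$ provided by Lemma~\ref{lm33}\eqref{enu33a}, and invoke Goursat's lemma. The well-known facts that $\PGL_2(q)$ has a single conjugacy class of Borel subgroups $C_q\rtimes C_{q-1}$ and a single conjugacy class of non-split tori $C_{q+1}$, both self-normalizing in $\PGL_2(q)$, imply that subgroups of $X^+$ isomorphic to $H$ or to $R(G)^+$ are essentially determined by a Borel (resp.\ non-split torus) of $U$ and a homomorphism into $Z$. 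A short computation then gives bounds of the form $\ell_{q,\ell},m_{q,\ell}\leq 2^{O((\log_2 r)^2)}$ (in fact much better, since even the number of relevant homomorphisms is at most $2^\ell\leq r$). Finally, Lemma~\ref{lmTriple}\eqref{eunTripled} shows that each equivalence class contains at most $|\Hol(G)|\leq r\cdot 2^{(\log_2 r)^2}=2^{(\log_2 r)^2+\log_2 r}$ members of $\mathcal{X}(q,\ell)$. Multiplying the three factors (choice of $(q,\ell)$, number of equivalence classes per pair, and members per class) yields the stated bound $|\mathcal{X}|\leq 2^{2(\log_2 r)^2+5\log_2 r}$, with some slack.

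The main obstacle is the Goursat-lemma step: one must verify that every abstract subgroup of $\PGL_2(q)\times C_2^\ell$ isomorphic to $C_q\rtimes C_{q-1}$ (respectively, an order-$r$ regular subgroup of the form dictated by Lemma~\ref{lm33}\eqref{enu33b}) projects onto a Borel (respectively, a non-split torus) of $\PGL_2(q)$, and to check that the resulting parameterization behaves correctly under conjugation by $X^+$. This entails a mild case analysis separating $q$ even from $q$ odd: when $q$ is even, $q-1$ is odd and $H$ has no nontrivial $2$-quotient, forcing $H\leq U$ and making the count essentially trivial; the odd case is more delicate because the unique index-$2$ subgroup of the Borel introduces a family of ``diagonal'' embeddings into $U\times Z$ parametrized by involutions of $Z$.
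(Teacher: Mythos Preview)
Your strategy has a genuine gap at the point where you ``replace each $X\in\mathcal{X}(q,\ell)$ by the right triple $(X^+/H,X^+,R(G)^+)$''. The isomorphism $X\cong X^+$ from Lemma~\ref{lmA}\eqref{enuAb} says only that restriction to $G^+$ is a faithful group homomorphism; it does \emph{not} say that $X$, as a subgroup of $\Sym(G^+\cup G^-)$, is determined by $X^+\leq\Sym(G^+)$. Two distinct $X_1,X_2\in\mathcal{X}$ may well satisfy $X_1^+=X_2^+$ while the point-stabilizers $(X_1)_{1^+}$ and $(X_2)_{1^+}$ act differently on $G^-$ (both contain $R(G)$, which pins down the action of $R(G)^+$ on $G^-$, but nothing pins down how the rest of $X^+$ extends). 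Consequently your count bounds $|\{X^+:X\in\mathcal{X}(q,\ell)\}|$, not $|\mathcal{X}(q,\ell)|$. The paper's proof deals with exactly this issue by bounding the triples $(G^\varepsilon,X^\varepsilon,R(G)^\varepsilon)$ for \emph{both} $\varepsilon\in\{+,-\}$ and then invoking that $X$ is determined by the pair $(X^+,X^-)$; the exponent $2(\log_2 r)^2+5\log_2 r$ is obtained precisely as a product $|\Delta_q^+|\cdot|\Delta_q^-|$ summed over $q$.

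A secondary point: your Goursat-lemma plan for bounding $\ell_{q,\ell}$ is more involved than necessary and, with the rough estimate $\ell_{q,\ell},m_{q,\ell}\le r$, would (after squaring to account for $X^-$) slightly overshoot the target exponent. The paper avoids this by exploiting the extra information in Lemma~\ref{lm33}\eqref{enu33a} that the point-stabilizer already lies inside the $\PGL_2(q)$ factor $U$, so one may take $\ell=1$ directly from the uniqueness of the Borel class in $\PGL_2(q)$; for $m$ it argues that any $\PGL_2(q)$-complement $W$ containing the torus $C$ satisfies $W'=U'$ and hence is determined by a single coset of $U'$, giving $m\le|\Center(X^\varepsilon)|\le r$. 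This is both simpler and tight enough to land on the stated bound.
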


\begin{proof}
    Let $Q$ be the set of prime powers $q$ with $3\leq q\leq r-1$. For $\varepsilon\in\{+,-\}$ and $q\in Q$, let
    \begin{align*}
        \Delta_q^\varepsilon=\{(G^\varepsilon,X^\varepsilon,R(G)^\varepsilon)\mid &~X\in\mathcal{X},\ X^\varepsilon\cong \PGL_2(q)\times C_2^\ell\text{ for some integer }\ell\geq0,\\
        &~r=2^\ell(q+1),\ R(G)^\varepsilon\cong C_{q+1}\times C_2^\ell,\ (X^\varepsilon)_{1^\varepsilon}\cong C_q\rtimes C_{q-1}\}.
    \end{align*}
    For a fixed $X\in \mathcal{X}$, there exists a prime power $q\in Q$ such that Lemma~\ref{lm33}\eqref{enu33a}--\eqref{enu33c} hold. In particular, $(G^\varepsilon,X^\varepsilon,R(G)^\varepsilon)\in \Delta_q^\varepsilon$.

    Note from~\cite[Tables~8.1 and 8.2]{BHR2013} that $\PGL_2(q)$ has a unique conjugacy class of subgroups isomorphic to $C_q\rtimes C_{q-1}$, and a unique conjugacy class of subgroups isomorphic to $C_{q+1}$. Then Lemma~\ref{lm33}\eqref{enu33a}\eqref{enu33c} imply that $X^\varepsilon$ has a unique conjugacy class of subgroups isomorphic to $(X^\varepsilon)_{1^\varepsilon}$. Write $U=U'\rtimes \la u\ra$ with $|u|=\gcd(2,q-1)$. By Lemma~\ref{lm33}\eqref{enu33a}\eqref{enu33b}, there exists $W\leq X$ with $W\cong\PGL_2(q)$ such that $C\leq W$. Clearly, $W'=(X^\varepsilon)'\cong\PSL_2(q)$, which implies that
    \[
        W=W'\rtimes C_{\gcd(2,q-1)}=(X^\varepsilon)'\cup (X^\varepsilon)'uv
    \]
    for some $v\in \Center(X^\varepsilon)$. Then there are at most $|\Center(X^\varepsilon)|\leq r$ choices for $W$. Therefore, $X^\varepsilon$ has at most $r$ conjugacy classes of subgroups isomorphic to $R(G)^\varepsilon$. Hence, by Lemma~\ref{lmTriple}\eqref{eunTriplec}\eqref{eunTripled},
    \[
        |\Delta_q^\varepsilon|\leq r\cdot |\Hol(G)|\leq r^2\cdot 2^{(\log_2r)^2}=2^{(\log_2r)^2+2\log_2r}.
    \]
    Noting that $X$ is determined by $X^+$ and $X^-$, we conclude that
    \[
        |\mathcal{X}|\leq \sum_{q\in Q}(|\Delta_q^+|\cdot |\Delta_q^-|)\leq r\cdot 2^{(\log_2r)^2+2\log_2r}\cdot 2^{(\log_2r)^2+2\log_2r}=2^{2(\log_2r)^2+5\log_2r},
    \]
    as required.
\end{proof}

\begin{lemma}\label{lm3.3DoubleCoset}
    For each $X\in \mathcal{X}$, the following statements hold.
    \begin{enumerate}[\rm(a)]
        \item \label{enuZXa} $|\Center(X)|\leq 3r/20+|\calI(G)|/4$.
        \item \label{enuZXb} The number of $S\in\calS_5$ such that $X(S)=X$ is at most $2|\Center(X)|$.
    \end{enumerate}
\end{lemma}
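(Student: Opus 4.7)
For part (a), I would combine Lemma~\ref{lmA}\eqref{enuAb} with Lemma~\ref{lm33}\eqref{enu33b} to read off $G\cong R(G)\cong C_{q+1}\times C_2^\ell$ and $|\Center(X)|=2^\ell$, with $r=(q+1)2^\ell$. A direct count of order-at-most-two elements of $C_{q+1}\times C_2^\ell$ yields $|\calI(G)|=2^\ell$ when $q$ is even and $|\calI(G)|=2^{\ell+1}$ when $q$ is odd. Dividing the desired inequality $|\Center(X)|\leq 3r/20+|\calI(G)|/4$ by $2^\ell$ reduces it to $1\leq 3(q+1)/20+1/4$ for $q$ even and $1\leq 3(q+1)/20+1/2$ for $q$ odd, both of which hold because $q\geq 3$ is a prime power, forcing $q\geq 4$ when $q$ is even.

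For part (b), I would apply Lemma~\ref{lmBiCos}. Setting $H=X_{1^+}$ and $K=X_{1^-}$, distinct $S$ with $X\leq B(S)$ correspond to distinct inverse-closed unions $Y$ of double cosets in $K\backslash X/H$. A key preliminary is that $\iota\in R(G)\rtimes\langle\iota\rangle\leq X$ fixes both $1^+$ and $1^-$, so $\iota\in H\cap K$; combined with $\iota R(g)\iota=R(g^{-1})$ (which uses that $G$ is abelian), this gives $KR(g^{-1})H=K(\iota R(g)\iota)H=KR(g)H$ for every $g\in G$. Hence every double coset is self-inverse, Lemma~\ref{lmBiCos}\eqref{enu22d} imposes no constraint, and the number of candidate $S$ equals $2^\kappa$, where $\kappa=|K\backslash X/H|$.

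To analyse $\kappa$, I would use the product decomposition $X=U\times\Center(X)$ with $U\cong\PGL_2(q)$ from Lemma~\ref{lm33}. Since $\Center(X)\leq R(G)$ is semiregular, $K\cap\Center(X)=1$, so $K$ is the graph of a homomorphism from a Borel subgroup of $U$ to $\Center(X)$; a direct $K$-orbit count on $G^+\cong\PG_1(q)\times\Center(X)$ gives $\kappa\in\{|\Center(X)|,2|\Center(X)|\}$, and the resulting bound $2^\kappa$ alone already exceeds $2|\Center(X)|$ when $|\Center(X)|\geq 2$. The remaining reduction must come from the $\calS_1$ conditions. Let $Z\leq G$ correspond to $\Center(X)\leq R(G)$. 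The $Z$-quotient of $D(\Cay(G,S))$ is the standard double cover $D(\Cay(G/Z,\pi(S)))$ of a Cayley graph on $G/Z\cong C_{q+1}$, and $X/Z\cong\PGL_2(q)$ acts $3$-transitively on each of $G^+/Z$ and $G^-/Z$. This rigidity forces $|\pi(S)|\in\{0,1,q,q+1\}$, so $S$ has a very constrained coset structure. Case-analysing these four possibilities against connectivity, non-bipartiteness, and twin-freeness, while tracking which unions of $K$-orbits realise each case, eliminates all but at most $2|\Center(X)|$ admissible $S$.

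The main obstacle is this final case analysis. For each of the four shapes of $\pi(S)$, one must determine which unions of $K$-orbits (themselves depending on whether the homomorphism defining $K$ is trivial) produce a valid $S\in\calS_1$, and show that the total count is at most $2|\Center(X)|$. A parity split on $q$ is necessary to control how $C_{q+1}$ contributes to twin-freeness and bipartiteness, and careful bookkeeping is needed to obtain the clean bound $2|\Center(X)|$ uniformly.
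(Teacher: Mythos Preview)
Your argument for part~(a) is correct and is essentially the paper's proof, just rescaled by the common factor $2^\ell$.

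For part~(b), your core computation is sound: you correctly observe that $\iota\in X_{1^+}\cap X_{1^-}$ forces every double coset $KR(g)H$ to coincide with $KR(g^{-1})H$, and you correctly bound $\kappa=|K\backslash X/H|\le 2|\Center(X)|$ via the graph-of-homomorphism description of $K$ relative to $X=U\times\Center(X)$. The paper reaches the same bound $\kappa\le 2|\Center(X)|$ by a different route: it passes to $X'\cong\PSL_2(q)$, where $X_{1^+}\cap X'$ and $X_{1^-}\cap X'$ are \emph{conjugate} Borel subgroups, uses the $2$-transitivity of $\PSL_2(q)$ on the projective line to get exactly two double cosets of $X_{1^+}\cap X'$ there, and then lifts through $\PGL_2(q)$ and the direct factor $\Center(X)$ to exhibit $X=X_{1^+}\bigl(\Center(X)\{x,yx\}\bigr)X_{1^-}$ explicitly.

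Where you diverge from the paper is in trying to sharpen $2^\kappa$ down to the literal bound $2|\Center(X)|$. This is unnecessary: both the lemma statement and the final sentence of the paper's proof appear to have dropped an exponent. The intended bound is $2^{2|\Center(X)|}$, as is clear from Proposition~\ref{propS5}, which combines parts~(a) and~(b) to assert ``at most $2^{2|\Center(X)|}\le 2^{3r/10+|\calI(G)|/2}$''. Thus the paper's argument, like yours, actually stops at $\kappa\le 2|\Center(X)|$ and hence $\#S\le 2^\kappa\le 2^{2|\Center(X)|}$. Your proposed further reduction via the $Z$-quotient and the $\calS_1$ conditions is therefore not needed; moreover, as you yourself flag, that case analysis is not carried out, and it is not evident it would close: the constraint $|\pi(S)|\in\{0,1,q,q+1\}$ controls which $Z$-cosets meet $S$, but within those cosets $S$ may still be an arbitrary $X_{1^+}$-invariant inverse-closed set, and connectivity, non-bipartiteness, and twin-freeness do not obviously force the total count down to something linear in $|\Center(X)|$.
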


\begin{proof}
    By Lemma~\ref{lmA}\eqref{enuAb} we have $X^+\cong X\cong X^-$.

    \eqref{enuZXa} It follows from Lemma~\ref{lm33}\eqref{enu33a}\eqref{enu33b} that $R(G)^+= C\times \Center(X^+)$, where $C\cong C_{q+1}$ for some prime power $q\geq 3$ and $\Center(X^+)$ is an elementary abelian $2$-group. Assume first that $q$ is odd. Then we obtain from the structure of $R(G)^+$ that
    \[
        |\calI(R(G)^+)|=2|\Center(X^+)|=\frac{2|R(G)^+|}{|C|}=\frac{2r}{q+1}.
    \]
    Observing $|\calI(G)|=|\calI(R(G)^+)|$ and $|\Center(X)|=|\Center(X^+)|$, we derive that
    \[
        |\Center(X)|-\frac{|\calI(G)|}{4}=\frac{r}{q+1}-\frac{r}{2(q+1)}=\frac{r}{2(q+1)}\leq \frac{r}{8}< \frac{3r}{20}.
    \]
    Assume next that $q$ is even. Then $q\geq 4$, and
    \[
        |\calI(R(G)^+)|=|\Center(X^+)|=\frac{|R(G)^+|}{|C|}=\frac{r}{q+1},
    \]
    which yields that
    \[
        |\Center(X)|-\frac{|\calI(G)|}{4}
        =|\Center(X^+)|-\frac{|\calI(R(G)^+)|}{4}
        =\frac{r}{q+1}-\frac{r}{4(q+1)}=\frac{3r}{4(q+1)}\leq \frac{3r}{20}.
    \]
    This completes the proof of part~\eqref{enuZXa}.

    \eqref{enuZXb} Applying Lemma~\ref{lm33}\eqref{enu33a} with $\varepsilon=+$ and $\varepsilon=-$, respectively, there exists a prime power $q\geq 3$, and subgroups $V$ and $W$ of $X$ isomorphic to $\PGL_2(q)$ such that
    \[
        V\times \Center(X)=X=W\times \Center(X) \ \text{ with }\ X_{1^+}\leq V \ \text{and}\ X_{1^-}\leq W.
    \]
    Clearly, $V'=W'=X'\cong \PSL_2(q)$. Let $d=\gcd(2,q-1)$, and let
    \[
        H=X_{1^+}\cap V' \ \text{ and }\  K=X_{1^-}\cap V'=X_{1^-}\cap W'.
    \]
    We deduce from $X^+\cong X\cong X^-$ and Lemma~\ref{lm33}\eqref{enu33c} that
    \[
        H \cong (X^+)_{1^+}\cap (V')^+\cong C_q\rtimes C_{(q-1)/d}\cong (X^-)_{1^-}\cap (V')^-\cong K.
    \]
    Since $\PSL_2(q)$ has a unique conjugacy class of subgroups isomorphic to $C_q\rtimes C_{(q-1)/d}$ (see~\cite[Tables~8.1 and~8.2]{BHR2013}), there exists $x\in V'$ such that $K=H^x$.

    With the above information, we bound $|X_{1^+}\backslash X/X_{1^-}|$. Since $|X_{1^+}|=d|H|$ and $|X_{1^-}|=d|K|$, there exist $h\in X_{1^+}$ and $k\in X_{1^-}$ such that
    \begin{equation}\label{eqX}
        X_{1^+}=\{1,h\}H \ \text{ and }\  X_{1^-}=K\{1,k\}.
    \end{equation}
    In particular, if $d=2$, then $h\notin V'$ and $k\notin V'$, whence $V=\{1,h\}V'$. Since $V'$ acts $2$-transitively on $V'/H$ by right multiplication, $|H\backslash V'/H|=2$. Then there exists $y\in V'$ such that
    \[
        V'=H\{1,y\}H=H\{1,y\}K^{x^{-1}}=H\{1,y\}xKx^{-1},
    \]
    and so $V'=H\{x,yx\}K$. This combined with $V=\{1,h\}V'$ and~\eqref{eqX} gives that
    \[
        V=\{1,h\}V'\{1,k\}=\{1,h\}H\{x,yx\}K\{1,k\}=X_{1^+}\{x,yx\}X_{1^-}.
    \]
    As a consequence, $X=V\times \Center(X)=X_{1^+}(\Center(X)\{x,yx\})X_{1^-}$, and so
    \[
        |X_{1^+}\backslash X/X_{1^-}|\leq 2|\Center(X)|.
    \]

    Now Lemma~\ref{lmBiCos} shows that distinct $S\in\calS_5$ with $X(S)=X$ corresponds to distinct $Y$, where $Y$ is a union of double cosets in $\in X_{1^+}\backslash X/X_{1^-}$, such that
    \[
        D(\Cay(G,S))\cong \BiCos(X,X_{1^-},X_{1^+};Y).
    \]
    As a consequence, the number of $S$ in $\calS_5$ with $X(S)=X$ is at most $|X_{1^+}\backslash X/X_{1^-}|\leq 2|\Center(X)|$, as required.
\end{proof}

We are now in a position to give the main result of this subsection.

\begin{proposition}\label{propS5}
The proportion $|\calS_5|/|\calS|$ is at most $2^{-r/5+2(\log_2r)^2+5\log_2r}$.
\end{proposition}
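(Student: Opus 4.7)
The plan is to combine the bound on $|\mathcal{X}|$ from Lemma~\ref{lmS5X} with the per-$X$ bound from Lemma~\ref{lm3.3DoubleCoset}, and then divide by $|\calS|=2^{\bfc(G)}$. Since the assignment $S\mapsto X(S)$ defines a map $\calS_5\to\mathcal{X}$, I would begin from the estimate
\[
    |\calS_5|\leq|\mathcal{X}|\cdot\max_{X\in\mathcal{X}}|\{S\in\calS_5\mid X(S)=X\}|.
\]

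Next, I would substitute $|\mathcal{X}|\leq 2^{2(\log_2r)^2+5\log_2 r}$ from Lemma~\ref{lmS5X}, together with $|\{S\in\calS_5\mid X(S)=X\}|\leq 2|\Center(X)|$ from Lemma~\ref{lm3.3DoubleCoset}\eqref{enuZXb} and $|\Center(X)|\leq 3r/20+|\calI(G)|/4$ from part~\eqref{enuZXa}, to obtain
\[
    |\calS_5|\leq 2^{2(\log_2r)^2+5\log_2 r}\cdot\Big(\frac{3r}{10}+\frac{|\calI(G)|}{2}\Big).
\]
Recalling that $|\calS|=2^{\bfc(G)}=2^{(r+|\calI(G)|)/2}$, I would then divide through and use the elementary inequality $x\leq 2^{x}$ (valid for all $x\geq 0$) applied to $x=3r/10+|\calI(G)|/2$. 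The key observation is the arithmetic identity
\[
    \frac{r+|\calI(G)|}{2}-\frac{r}{5}=\frac{3r}{10}+\frac{|\calI(G)|}{2},
\]
so absorbing the additive factor $3r/10+|\calI(G)|/2$ into the exponent contributes exactly the term that cancels the $|\calI(G)|/2$ from $\bfc(G)$, leaving a clean $2^{-r/5}$ and yielding the claimed bound $2^{-r/5+2(\log_2 r)^2+5\log_2 r}$.

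There is essentially no remaining obstacle: all substantive work has already been packaged into Lemmas~\ref{lmS5X} and~\ref{lm3.3DoubleCoset}, and what remains is a short arithmetic assembly. The only mild subtlety is to notice that the additive term $3r/10+|\calI(G)|/2$ must be converted to multiplicative form; the inequality $x\leq 2^x$ suffices, and it is a small but pleasant coincidence that the exponent it produces matches precisely $\bfc(G)-r/5$, which is exactly what makes the $|\calI(G)|$-dependence cancel and leaves a bound depending only on $r$.
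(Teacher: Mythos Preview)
Your proof is correct and follows essentially the same route as the paper's: combine Lemma~\ref{lmS5X} with Lemma~\ref{lm3.3DoubleCoset} and divide by $|\calS|=2^{(r+|\calI(G)|)/2}$. The only cosmetic difference is that the paper's proof uses the per-$X$ bound in the exponential form $2^{2|\Center(X)|}$ (which is in fact what the argument of Lemma~\ref{lm3.3DoubleCoset}\eqref{enuZXb} actually delivers, the stated ``$2|\Center(X)|$'' being a typo for $2^{2|\Center(X)|}$), whereas you take the literal linear bound and recover the exponential via $x\leq 2^x$; both routes land on the same factor $2^{3r/10+|\calI(G)|/2}$ and hence the same final estimate.
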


\begin{proof}
    Combining statements~\eqref{enuZXa} and~\eqref{enuZXb} of Lemma~\ref{lm3.3DoubleCoset}, we obtain that, for each $X\in \mathcal{X}$, the number of $S$ in $\calS_5$ such that $X(S)=X$ is at most $2^{2|\Center(X)|}\leq 2^{3r/10+|\calI(G)|/2}$. We then conclude from Lemma~\ref{lmS5X} that
\[
    \frac{|\calS_5|}{|\calS|}
    \leq \frac{2^{\frac{3}{10}r+\frac{|\calI(G)|}{2}}|\mathcal{X}|}{|\calS|}
    \leq\frac{2^{\frac{3}{10}r+\frac{|\calI(G)|}{2}+2(\log_2r)^2+5\log_2r}}{2^{\frac{r+|\calI(G)|}{2}}}
    =2^{-\frac{r}{5}+2(\log_2r)^2+5\log_2r},
\]
which completes the proof.
\end{proof}

\subsection{Proof of Theorem~$\ref{thmNumber}$ and their corollaries}\label{subsec44}

In this subsection, we complete the proof of Theorem~\ref{thmNumber} and Corollaries~\ref{cor} and~\ref{corGRR}.

\begin{proof}[Proof of Theorem~$\ref{thmNumber}$]
    It follows from Proposition~\ref{prop45} that
    \[
        |\calS-\calS_2|=|\calS-\calS_1|+|\calS_1-\calS_2|\leq |\calS-\calS_1|+|\calS_3|+|\calS_4|+|\calS_5|.
    \]
    We then conclude from Propositions~\ref{propS1},~\ref{propS3},~\ref{propS4}, and~\ref{propS5} that
    \begin{align*}
        \frac{|\calS-\calS_2|}{|\calS|}&\leq \frac{|\calS-\calS_1|+|\calS_3|+|\calS_4|+|\calS_5|}{|\calS|}\\
        &\leq 2^{-\frac{r}{6}+(\log_2r)^2+2}+2^{-\frac{r}{24}+(\log_2r)^2+\log_2r+2}+2^{-\frac{r}{24}+(r^{2\delta}+r^\delta+6)(\log_2r)^2+(2+\delta)\log_2r}\\
        &\ \ \ +2^{-\frac{2}{25}r^\delta+3\log_2r+1}+2^{-\frac{r}{5}+2(\log_2r)^2+5\log_2r}\\
        &\leq 2^{-\frac{r}{24}+(r^{2\delta}+r^\delta+6)(\log_2r)^2+5\log_2r+4}+2^{-\frac{2}{25}r^\delta+3\log_2r+1}\\
        &=h_\delta(r).
    \end{align*}
    Therefore, the theorem follows from Lemma~\ref{lmStable}.
\end{proof}

\begin{proof}[Proof of Corollary~$\ref{cor}$]
    If $G$ has exponent greater than $2$, then the conclusion follows immediately from Theorem~\ref{thmNumber}. Assume now that the exponent of $G$ is $2$. Then each Cayley digraph on $G$ is a graph, and the inversion mapping $\iota=1$. Hence, it follows from~\cite[Theorem~1.2(b)]{GSX2025} that the proportion of inverse-closed subsets $S\subseteq G$ with
    \begin{equation}\label{eqLong}
        \Aut(D(\Cay(G,S)))=(R(G)\rtimes\la\iota\ra)\times C_2
    \end{equation}
    approaches to $1$ as $r\to \infty$. Since $R(G)\rtimes\la\iota\ra\leq \Aut(\Cay(G,S))$, the condition~\eqref{eqLong} implies that $\Cay(G,S)$ is stable. Consequently, the proportion of inverse-closed subsets $S\subseteq G$ with $\Cay(G,S)$ stable approaches to $1$ as $r\to \infty$.
\end{proof}

\begin{proof}[Proof of Corollary~$\ref{corGRR}$]
    The corollary follows from Corollary~\ref{cor} and the observations
    \[
        R(G)\rtimes\la\iota\ra \leq \Aut(\Cay(G,S)) \ \text{ and }\  \Aut(\Cay(G,S))\times C_2\leq \Aut(D(\Cay(G,S))).\qedhere
    \]
\end{proof}

\subsection{Proof of Theorem~$\ref{thmIso}$ and its corollaries}\label{subsec45}

In this subsection, we prove Theorem~\ref{thmIso} and Corollary~\ref{corIso1}. Note that the condition $\Aut(D(\Cay(G,S)))=(R(G)\rtimes\la\iota\ra)\times C_2$ implies that $\Cay(G,S)$ is stable.

\begin{proof}[Proof of Theorem~$\ref{thmIso}$]
    Let
    \[
        \calS_{\text{good}}=\{S\in\calS\mid \Aut(D(\Cay(G,S)))=(R(G)\rtimes\la\iota\ra)\times C_2\} \ \text{ and }\ \calS_{\text{bad}}=\calS-\calS_{\text{good}}.
    \]
    Then it follows from Theorem~\ref{thmNumber} that
    \begin{equation}\label{eqbad}
        \frac{|\calS_{\text{bad}}|}{|\calS_{\text{good}}|}
        =\frac{|\calS_{\text{bad}}|}{|\calS|}\cdot \frac{|\calS|}{|\calS_{\text{good}}|}
        \leq \frac{h_\delta(r)}{1-h_\delta(r)},
    \end{equation}
    where $h_\delta(r)$ is as in~\eqref{eqHdelta}. Let $\mathcal{U}$ be the set of all unlabeled Cayley graphs of $G$, let $\mathcal{U}_{\text{good}}$ be a subset of $\mathcal{U}$ consisting of all unlabeled Cayley graphs $\Cay(G,S)$ with $S\in \calS_{\text{good}}$, and let $\mathcal{U}_{\text{bad}}=\mathcal{U}-\mathcal{U}_{\text{good}}$. Take arbitrary $S_1$ and $S_2$ in $\calS_{\text{good}}$, and denote $\Gamma_1=\Cay(G,S_1)$ and $\Gamma_2=\Cay(G,S_2)$. Then the definition of $\calS_{\text{good}}$ yields that
    \[
        \Aut(\Gamma_1)=R(G)\rtimes\la\iota\ra=\Aut(\Gamma_2).
    \]
    Suppose that $\Gamma_1\cong \Gamma_2$, and let $\varphi\in \Sym(G)$ be a graph isomorphism from $\Gamma_1$ to $\Gamma_2$. Then $\Aut(\Gamma_1)^\varphi=\Aut(\Gamma_2)$, and so $\varphi$ normalizes $R(G)\rtimes\la\iota\ra$. Since $R(G)$ is characteristic in $R(G)\rtimes\la\iota\ra$, it follows that $R(G)^\varphi=R(G)$, which means $\varphi\in \Hol(G)$. Hence, $S_1$ and $S_2$ are conjugate via an element of $\Hol(G)$. Therefore,
    \[
        |\mathcal{U}_{\text{good}}|\geq \frac{|\calS_{\text{good}}|}{|\Hol(G)|}\geq \frac{|\calS_{\text{good}}|}{2^{(\log_2r)^2+\log_2r}}.
    \]
    Combining this with~\eqref{eqbad} we derive that
    \[
        \frac{|\mathcal{U}_{\text{bad}}|}{|\mathcal{U}|}
        \leq \frac{|\calS_{\text{bad}}|}{|\mathcal{U}_{\text{good}}|}
        \leq \frac{|\calS_{\text{bad}}|}{|\calS_{\text{good}}|}\cdot 2^{(\log_2r)^2+\log_2r}
        \leq \frac{h_\delta(r)}{1-h_\delta(r)}\cdot 2^{(\log_2r)^2+\log_2r}.
    \]
    This completes the proof.
\end{proof}

\begin{proof}[Proof of Corollary~$\ref{corIso1}$]
    If the exponent of $G$ is $2$, then each Cayley digraph on $G$ is a graph, and the inversion mapping $\iota=1$. Hence, it follows from Theorem~\ref{thmIso} and~\cite[Theorem~1.3(b)]{GSX2025} that the proportion of unlabeled Cayley graphs $\Gamma$ on $G$ with
    \[
    \Aut(D(\Gamma)) = (R(G) \rtimes \langle \iota \rangle) \times C_2
    \]
    approaches to $1$ as $r\to \infty$. Recalling
    \[
        R(G)\rtimes\la\iota\ra \leq \Aut(\Cay(G,S)) \ \text{ and }\  \Aut(\Cay(G,S))\times C_2\leq \Aut(D(\Cay(G,S))),
    \]
    we conclude that the proportion of unlabeled Cayley graphs of $G$ that are stable approaches to $1$ as $r\to \infty$, and the proportion of unlabeled Cayley graphs $\Gamma$ on $G$ with $\Aut(\Gamma)=R(G)\rtimes\la\iota\ra$ approaches to $1$ as $r\to \infty$.
\end{proof}

\section*{Acknowledgements}

The authors thank the organizers of the ``Workshop on Group Actions on Discrete Structures'' (Kranjska Gora, June 2024), where this work was initiated. We acknowledge the support of the Slovenian Research and Innovation Agency under Project~N1~0216 for funding the workshop. We are grateful to Ted Dobson and \DJ or\dj e Mitrovi\'c for valuable discussions during the workshop, and particularly to \DJ or\dj e Mitrovi\'c for reviewing the first draft and suggesting revisions. We also thank Ran Ju, Yu Wang, and Ju Zhang for helpful discussions regarding the presentation of the paper. The work of Zhishuo Zhang was supported by the Melbourne Research Scholarship at The University of Melbourne. Shasha Zheng was supported by the Postdoctoral Fellow Fund of Comenius University in Bratislava.

\end{document}